\font\emailfont=cmtt10
\newcommand\commentable[1]{#1}
\newcommand\Id{\mathrm{Id}}
\newcommand{\HF}{HF}
\newtheorem{thm}{Theorem}
\newtheorem*{Adjunction Inequality}{Adjunction Inequality}
\newtheorem*{sympthom}{Symplectic Thom Conjecture}
\newtheorem{theorem}{Theorem}[section]
\newtheorem{prop}[theorem]{Proposition}
\newtheorem{cor}[theorem]{Corollary}
\newtheorem{lemma}[theorem]{Lemma}
\newtheorem{example}[theorem]{Example}
\newtheorem{defn}[theorem]{Definition}
\newtheorem{remark}[theorem]{Remark}
\def\endproofof{\relax\ifmmode\expandafter\endproofmath\else
  \unskip\nobreak\hfil\penalty50\hskip.75em\hbox{}\nobreak\hfil\bull
  {\parfillskip=0pt \finalhyphendemerits=0 \bigbreak}\fi}
\def\endproofofmath$${\eqno\bull$$\bigbreak}
\def\endproof{\relax\ifmmode\expandafter\endproofmath\else
  \unskip\nobreak\hfil\penalty50\hskip.75em\hbox{}\nobreak\hfil\bull
  {\parfillskip=0pt \finalhyphendemerits=0 \bigbreak}\fi}
\def\endproofmath$${\eqno\bull$$\bigbreak}
\def\bull{\vbox{\hrule\hbox{\vrule\kern3pt\vbox{\kern6pt}\kern3pt\vrule}\hrule}}
\newcommand{\Q}{\mathbb{Q}}
\newcommand{\R}{\mathbb{R}}
\newcommand{\C}{\mathbb{C}}
\newcommand{\Z}{\mathbb{Z}}
\newcommand{\CP}[1]{{\mathbb{CP}}^{#1}}
\newcommand{\CPbar}{{\overline{\mathbb{CP}}}^2}
\newcommand{\Image}{\mathrm{Im}}
\newcommand\SpinC{\mathrm{Spin}^c}
\newcommand{\F}{\mathbb F}
\newcommand\relspinc{\underline{\spinc}}
\newcommand\Filt{\mathcal F}
\newcommand\x{\mathbf x}
\newcommand\y{\mathbf y}
\newcommand\sC{\mathcal C}
\newcommand\ModSphere{\ModFlow\left({\mathbb S}\longrightarrow 
\Sym^{g-1}(\Sigma_{1})\times \Sym^2(\Sigma_{2})\right)}
\newcommand\ModSpheres\ModSphere
\newcommand\CF{CF}
\newcommand\CFa{\widehat{CF}}
\newcommand\CFp{\CFb}
\newcommand\CFm{\CF^-}
\newcommand\HFp{\HFb}
\newcommand\HFm{\HF^-}
\newcommand\CFinf{CF^\infty}
\newcommand\HFinf{HF^\infty}
\newcommand\CFb{CF^+}
\newcommand\HFa{\smash{\widehat{HF}}}
\newcommand\HFhat{\HFa}
\newcommand\HFb{HF^+}
\newcommand\UnparModSp{\widehat \ModSp}
\newcommand\UnparModFlow\UnparModSp
\newcommand\Mod\ModSp
\newcommand\PD{\mathrm{PD}}
\newcommand{\spinc}{\mathfrak s}
\newcommand{\spincr}{\mathfrak r}
\newcommand{\spincu}{\mathfrak u}
\newcommand{\spinct}{\mathfrak t}
\newcommand\ModMaps{\mathcal M}
\newcommand\ModSp\ModMaps
\newcommand\Ta{{\mathbb T}_{\alpha}}
\newcommand\Tb{{\mathbb T}_{\beta}}
\newcommand\alphas{\mbox{\boldmath$\alpha$}}
\newcommand\betas{\mbox{\boldmath$\beta$}}
\newcommand\gammas{\mbox{\boldmath$\gamma$}}
\newcommand\spincrel\relspinc
\newcommand\CFK{CFK}
\newcommand\HFK{HFK}
\newcommand\CFKa{\widehat\CFK}
\newcommand\CFKinf{\CFK^{\infty}}
\newcommand\HFKm{\HFK^-}
\newcommand\Dual{\mathcal D}
\newcommand\Duality\Dual
\newcommand\ons{Ozsv{\'a}th and Szab{\'o}}
\newcommand\os{{Ozsv{\'a}th-Szab{\'o}}}
\def\Hom{\operatorname{Hom}}
\newcommand\taua{\tau_\alpha(Y_1,K_1)}
\newcommand\taub{\tau_\beta(Y_2,K_2)}
\newcommand\SSurf{S}
\newcommand\tauad{\tau_{\alpha^{\otimes d}}(\#^dY_1,\#^dK_1)}
\newcommand\taubd{\tau_{\beta^{\otimes d}}(\#^dY_2,\#^dK_2)}
\newcommand\Sym{\mathrm{Sym}}
\newcommand\Surg{Y_{\framing}(K)}
\newcommand\Surgneg{Y_{\!-\framing}(K)}
\newcommand\Wsurg{W_{\framing}(K)}
\newcommand\Wsurgneg{W_{\!-\framing}(K)}
\newcommand\Wsurgdual{-W^{\dagger}_{\framing}(K)}
\newcommand\Whandles{\widehat{W}}
\newcommand\Wone{-W^{\dagger}_{\framing_1}(K_1) }
\newcommand\Wtwo{W_{\!-\framing_2}(K_2)}
\newcommand\Surgone{Y_{\framing_1}(K_1)}
\newcommand\Surgtwo{Y_{\!-\framing_2}(K_2)}
\newcommand\Maptwo{ F_{\!- \framing_2,\spinc_2,m_2}}
\newcommand\Mapone{ F^\dagger_{\framing_1,\spinc_1,m_1}}
\newcommand\Mapneg{ F_{\!- \framing,\spinc,m}}
\newcommand\Mappos{ F^\dagger_{\framing,\spinc, m}}
\newcommand\framing{n}
\newcommand\order{q}
\newcommand\Warcsum{W^{\otimes d}}
\newcommand\Sigmaarcsum{\Sigma^{\otimes d}}
\newcommand\SigmaSSd{\Sigma^{\otimes d}_{S_1,S_2}}
\newcommand\SigmaSS{\Sigma_{S_1,S_2}}
\newcommand\SigmaDD{\Sigma_{D_1, D_2}}
\newcommand\SSurfD{ D_{\SSurf}}
\newcommand\SSurfDone{D_{1, \SSurf_1}}
\newcommand\SSurfDtwo{D_{2,\SSurf_2}}
\newcommand\hatSigma{\Sigma}
\newcommand\SoneStwo{S^1\times S^2}
\newcommand\Cbundle{\partial\Dbundle}
\newcommand\Dbundle{\nu(\hatSigma)}
\newcommand{\bignatural}{\mathop{\vcenter{\hbox{\Large$\natural$}}}}
\newcommand\tautop{\tau_{top}}
\newcommand\taubot{\tau_{bot}}
\newcommand\can{\text{can}}
\newcommand\tb{\operatorname{tb}}
\newcommand\rot{\operatorname{rot}}
\newcommand\lcm{\operatorname{lcm}}
\title[{Knot Floer homology and relative adjunction inequalities}] 
{Knot Floer homology and relative adjunction inequalities}
\author[Matthew Hedden]{Matthew Hedden}
\address{Department of
Mathematics, Michigan State University \newline
\indent{\emailfont{mhedden@math.msu.edu}}}
\author{Katherine Raoux}
\address{Department of Mathematics, Michigan State University \newline\indent{\emailfont{raouxkat@msu.edu}}}
\thanks{MH gratefully acknowledges support from NSF grants DMS-0706979,  DMS-0906258,  CAREER DMS-1150872,  DMS-1709016, DMS-2104664 and an Alfred P. Sloan Research Fellowship.  KR was partially supported by an AWM Mentoring Travel Grant}
\begin{document}
\maketitle

\begin{abstract} We establish inequalities that constrain the genera of smooth cobordisms between knots in 4-dimensional cobordisms.  These ``relative adjunction inequalities" improve the adjunction inequalities for closed surfaces which have been instrumental in many topological applications of gauge theory.  The relative inequalities refine the latter  by incorporating numerical invariants of knots in the boundary associated to Heegaard Floer homology classes determined by the $4$-manifold.  As a corollary, we produce a host of concordance invariants  for knots in a general 3-manifold, one such invariant for every non-zero Floer class.  We apply our results to produce analogues of the Ozsv{\'a}th-Szab{\'o}-Rasmussen concordance invariant for links, allowing us to reprove the link version of the Milnor conjecture, and, furthermore, to show that knot Floer homology detects strongly quasipositive fibered links.   

\end{abstract}

\section{Introduction}
A complex curve embedded in a complex surface satisfies a so-called ``adjunction formula" that computes its Euler characteristic  in terms of its self-intersection number and  pairing with the first Chern class of the complex structure; see, for example, \cite{GS}. Applied to a smooth algebraic curve $V_d\subset \CP{2}$,  one obtains the classical formula expressing its genus in terms of the degree $d$ of its defining homogenous polynomial:  $2g=(d-1)(d-2)$.   The Thom conjecture asserts that any smoothly embedded surface in the homology class of $V_d$ has genus at least this large.  

The advent of gauge theory brought tools that could tackle this surprising conjecture.  These  take the form of ``adjunction inequalities", which constrain the genera of smoothly embedded surfaces in 4-manifolds possessing non-vanishing gauge-theoretic invariants:

\begin{Adjunction Inequality}[\cite{KMThom,MST,HolDiskFour}] \label{thm:closed}
Let $X$ be a smooth, closed, oriented $4$-manifold satisfying $b_2^+(X)>1$, and let $\spinct$ be a $\SpinC$ structure on $X$ with non-zero Seiberg-Witten or \os\ invariant.  Then $$ |\langle c_1(\spinct),[\Sigma]\rangle | + [\Sigma]^2 \le 2g(\Sigma)-2,$$ 
where $\Sigma \subset X$ is any smoothly embedded oriented surface satisfying $[\Sigma]^2\ge 0$.
\end{Adjunction Inequality} 
\noindent Here, $b_2^+(X)$ denotes the dimension of a maximal subspace on which the intersection form on $H_2(X;\R)$ is positive definite and $g(\Sigma)$ denotes the genus.  A similar inequality, predating the above, holds for manifolds with non-trivial Donaldson polynomial invariant \cite{KM1,KM2}.   These theorems have been generalized in several directions, most notably to the situation where $b_2^+(X)=1$ and the square of $\Sigma$ is arbitrary \cite{SympThom,HigherAdj}.  

It is difficult to overstate the importance of these inequalities in the study of smooth 4-manifolds.  A particular triumph was their affirmation of a general Thom conjecture:
\begin{sympthom} [\cite{SympThom}] Let $\Sigma\subset (X,\omega)$ be a smoothly embedded symplectic surface in a symplectic $4$-manifold.  Then $\Sigma$ minimizes genus amongst all smoothly embedded surfaces in its homology class.
\end{sympthom} 
\noindent In this generality, the theorem was proved by \ons \ \cite{SympThom}, but important special cases were proved by a collection of authors, most notably the case of holomorphic curves in  $\CP{2}$ by Kronheimer and Mrowka \cite{KMThom}.\\

The  purpose of this article is to prove a relative version of the adjunction inequality for properly embedded surfaces in 4-manifolds with boundary. Such a surface  intersects the boundary 3-manifold in a knot or link, and our theorem refines the adjunction inequality with numerical invariants of this link derived from  knot Floer homology.  A knot $K\subset Y$ determines a filtration of the Heegaard Floer homology $\HFa(Y)$.  Given a non-zero Floer class $\beta\in\HFa(Y)$, we thereby obtain a number $\tau_\beta(Y,K)$ that records the filtration level of $\beta$. Roughly, our main theorem says that the number  $\tau_\beta(Y,K)$ bounds the genera of properly embedded surfaces with boundary $K$ in certain $4$-manifolds with boundary. The  Floer-theoretic assumption on these $4$-manifolds which plays the role of a non-vanishing Seiberg-Witten invariant is nontriviality of the map induced by $W$ on the hat Floer homology. Specifically, we must have $F_{W,\spinct}(\alpha)=\beta$, where $\alpha$ and $\beta$ are non-zero Heegaard Floer homology classes for $Y_1$ and $Y_2$, respectively, and $\spinct$ is a $\SpinC$ structure on $W$.

 \begin{thm}\label{thm:relativeadjunction}
 Let $W$ be a smooth, compact,  oriented 4-manifold with $\partial W =-Y_1\sqcup Y_2$.  Let $K_1 \subset Y_1$ and $K_2\subset Y_2$ be  rationally null-homologous knots. If $F_{W,\spinct}(\alpha)=\beta\neq 0$, then 
 \begin{equation}\label{eqn:reladjunction}
 \langle c_1(\spinct),[\Sigma]\rangle+[\Sigma]^2+2(\taub-\taua)\leq 2g(\Sigma)
 \end{equation}
where $\Sigma$ is any oriented surface, smoothly  and properly embedded  with  $\partial \Sigma=-K_1\sqcup K_2$.
\end{thm}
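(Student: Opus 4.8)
The plan is to reduce the relative adjunction inequality to the absolute adjunction inequality by a cap-off construction, combined with a careful analysis of how the knot Floer filtration behaves under cobordism maps. The basic strategy: given the properly embedded surface $\Sigma \subset W$ with $\partial\Sigma = -K_1 \sqcup K_2$, I want to close it up into a closed surface $\Sigma^+$ in a closed $4$-manifold $X$ by attaching surfaces bounding $K_1$ and $K_2$ in $4$-manifolds glued to $W$ along $Y_1$ and $Y_2$. The natural candidates for the capping pieces come from $0$-surgery (or, for rationally null-homologous knots, appropriately framed surgery) on $K_i$: the surface bounding $K_i$ in $Y_i$ caps off to a surface in the surgery cobordism, and the difference between $2\tau_\beta$ and the genus of the cap is governed precisely by the behavior of the induced map on knot Floer homology.

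**First I would** set up the knot Floer filtration machinery: recall that $K \subset Y$ induces a $\Z$-filtration $\mathcal{F}(Y,K,\cdot)$ on $\CFa(Y)$, that $\tau_\beta(Y,K) = \min\{ s : \beta \in \mathrm{Im}(H_*(\mathcal{F}(Y,K,s)) \to \HFa(Y))\}$, and the key functoriality statement (which should appear earlier in the paper): a cobordism $(W,\Sigma)$ between $(Y_1,K_1)$ and $(Y_2,K_2)$ with $\spinct \in \SpinC(W)$ induces a filtered map, so that $F_{W,\spinct}$ shifts filtration levels by an amount controlled by $\frac{\langle c_1(\spinct),[\Sigma]\rangle + [\Sigma]^2 - 2g(\Sigma)}{2}$ — this is the relative analogue of the absolute grading shift formula, and it is essentially the content of the adjunction inequality applied surface-by-surface. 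Then, since $F_{W,\spinct}(\alpha) = \beta \neq 0$, the class $\alpha$, which lies in filtration level $\tau_\alpha(Y_1,K_1)$, must map to something in filtration level at least $\tau_\beta(Y_2,K_2)$, and chasing the shift formula through yields precisely inequality \eqref{eqn:reladjunction}.

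**The hard part will be** establishing that the cobordism map $F_{W,\spinct}$ is filtered with the correct filtration shift — i.e., that a properly embedded surface $\Sigma$ in $W$ genuinely induces a filtered chain map on the knot Floer complexes with shift bounded below by $\frac{1}{2}(\langle c_1(\spinct),[\Sigma]\rangle + [\Sigma]^2) - g(\Sigma)$. This requires either (a) building the surface $\Sigma$ from a movie of elementary cobordisms (births, deaths, saddles, and handle attachments away from the knot) and tracking the filtration through each, using the fact that knot Floer homology is functorial under decorated cobordisms; or (b) the cap-off route: glue surgery cobordisms to both ends to close everything up, apply the closed adjunction inequality (Adjunction Inequality cited above) to $\Sigma^+ \subset X$, and show the correction terms from the two caps are exactly $2\tau_{\beta}$ and $-2\tau_{\alpha}$ via the large-surgery formula identifying $\HFa$ of surgery with a subquotient of $\CFa(Y,K)$. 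I expect route (b) to be cleaner for the rationally null-homologous case, since it sidesteps delicate sign/orientation issues in the filtered cobordism maps, but it requires care that $b_2^+$ of the closed-up manifold is large enough (or that one is in the $b_2^+ = 1$ regime covered by \cite{SympThom,HigherAdj}), and that the gluing of Heegaard Floer cobordism maps along the surgered boundary components behaves compatibly with the filtrations — verifying this compatibility, rather than any single computation, is the crux of the argument.
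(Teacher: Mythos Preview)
Your route (b) is close to the paper's approach, but there are two substantive gaps.

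\textbf{The vanishing input.} You propose to close $\widehat W$ up to a \emph{closed} $4$-manifold $X$ and invoke the closed adjunction inequality (which requires $b_2^+>1$ and a nonvanishing \os\ invariant). You correctly flag this as a concern, and indeed it is fatal as stated: after attaching $2$-handles along $K_1,K_2$ the new boundary components $Y_{n_1}(K_1)$ and $Y_{-n_2}(K_2)$ are arbitrary $3$-manifolds, and there is no mechanism to cap them while controlling $b_2^+$ or the closed invariant. The paper never closes up. Instead it proves a vanishing theorem \emph{for cobordism maps}: if $\widehat\Sigma\subset \widehat W$ is a closed surface with $\langle c_1(\spinct),[\widehat\Sigma]\rangle+[\widehat\Sigma]^2>2g(\widehat\Sigma)$, then $F_{\widehat W,\spinct}=0$. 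This is proved by factoring through a neighborhood of $\widehat\Sigma$, identifying that neighborhood as an arc sum with a punctured disk bundle, and combining a direct large-surgery computation of the disk-bundle map with a K\"unneth formula for arc sums. No hypothesis on $b_2^+$ or on closed invariants is needed.

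\textbf{The off-by-two and the arc-sum trick.} The large-surgery identification (your ``correction terms are exactly $2\tau_\beta$ and $-2\tau_\alpha$'') is not exact: Proposition~\ref{prop:FourDInterp} gives nontriviality only for $m_1>\tau_\alpha-k_{\spinc_1}$ and $m_2<\tau_\beta-k_{\spinc_2}$, which are strict inequalities. Feeding this into the vanishing theorem yields only
\[
\langle c_1(\spinct),[\Sigma_{S_1,S_2}]\rangle+[\Sigma_{S_1,S_2}]^2+2(\tau_\beta-\tau_\alpha)-4\le 2g(\Sigma),
\]
i.e.\ the desired inequality minus $4$. The paper removes this defect by taking the $d$-fold arc sum of $(W,\Sigma)$ with itself: all terms scale by $d$ except the $-4$, so dividing by $d$ and letting $d\to\infty$ recovers the sharp bound. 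This step is essential and is absent from your outline.

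Finally, your route (a)---assuming as a black box that $F_{W,\spinct}$ is filtered with the stated shift---is essentially the Zemke functoriality approach. It works (for null-homologous knots), but it is precisely the machinery the paper is written to avoid; the filtered-map statement you call ``the key functoriality statement (which should appear earlier in the paper)'' does not appear earlier, and proving it is at least as hard as the theorem itself.
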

\noindent If  $\Sigma$ is disconnected we interpret its genus to be the sum of the genera of its components. Note that a surface with $\partial \Sigma=-K_1\sqcup K_2$ exists if and only if $[K_1]=[K_2]$ in $ H_1(W;\Z)$. 

The left two terms of Equation \eqref{eqn:reladjunction} warrant some explanation. To define them, we lift $[\Sigma]$ to a class in $H_2(W;\Q)$, and consider the relevant $\Q$-valued pairing and self-intersection number.  The existence of a lift is guaranteed by our hypothesis that the knots are rationally null-homologous, i.e. that $0=[K_i]\in H_1(Y_i;\Q)$.  If the knots are null-homologous,  then we can lift $[\Sigma]$ to a class in $H_2(W;\Z)$ and the terms on the left will be integers.   Note, though, that in both cases the  lift of $[\Sigma]$ is typically not canonical; there is an ambiguity coming from $H_2(\partial W)\cong H_2(Y_1)\oplus H_2(Y_2)$.  This ambiguity is present in the definition of the filtration on Floer homology and  we show that the sum of the terms on the left hand side of Equation \eqref{eqn:reladjunction} is independent of the lift. Details are discussed in Section \ref{sec:mainproof}.

Special cases of Theorem \ref{thm:relativeadjunction} have appeared throughout the literature.  The first, which was the initial inspiration for this work, is due to Rasmussen \cite{RasThesis} and \ons \ \cite{FourBall} and treats the case of knots in the $3$-sphere, $S^3$. In this setting, there is a unique non-zero Floer class, and the corresponding invariant is denoted $\tau(K)$. They prove the relative adjunction inequality for surfaces in negative definite 4-manifolds with boundary $S^3$, and show that $\tau(K)$ is a concordance invariant (indeed a concordance homomorphism).  Since then, genus bounds and concordance invariance have been established in various settings for the  $\tau_\alpha$ invariant corresponding to the subspace of Floer homology arising from the stable image of $U^n$ \cite{GRS,Raoux, HLL}.  Our theorem encompasses all these results.

We  note that the theorem above, in the null-homologous case, can alternatively be deduced from the functoriality of knot Floer homology with respect to cobordisms and its grading shift formula \cite{ZemkeLink,ZemkeGrading}, which appeared during the course of our work.  Our proof is significantly simpler, avoiding as it does most of the numerous subtleties involved with establishing the full functoriality of knot Floer homology. It also establishes the inequalities for rationally null-homologous knots, though Zemke's grading shift formula should readily adapt to this setting.  Furthermore, it allows us to correct and clarify an issue in the literature (see Remark \ref{circle-error}). We cannot, however, recover some of the beautiful applications that the full functoriality of knot Floer homology has recently afforded  \cite{2018arXiv180409589J,2018arXiv181009158J,2019arXiv190204050Z,2019arXiv190305772M,2019arXiv190402735J}.

\subsection{Applications}
Our primary motivation for pursuing the general relative adjunction inequality stems from a number of topological applications pursued here and in subsequent papers. In the remainder of the introduction, we briefly describe some of these applications, and conclude with an outline of the paper.

Theorem \ref{thm:relativeadjunction} allows us to define concordance invariants of links, using \ons's ``knotification" procedure \cite[Subsection 2.1]{Knots}.  For an $|L|$ component link, these invariants are indexed by elements in the cohomology of an $|L|-1$ dimensional torus,  $H^*(\mathbb{T}^{|L|-1})$, a graded $\F$-vector space  of rank $2^{|L|-1}$ corresponding to the Floer homology of $\#^{|L|-1}S^1\times S^2$.  The natural grading is shifted down so that the bottom graded summand lives in degree $\frac{1}{2}{(1-|L|)}$. Each element of $H^*(\mathbb{T}^{|L|-1})$ gives rise to a link concordance invariant which generalizes the \os-Rasmussen invariant for knots.  We summarize our results in the following theorem, whose parts are proved in Section \ref{subsec:links}.  In the case that $|L|=1$, and $L$ is a knot, then we replace $H^*(\mathbb{T}^{|L|-1})$ with reduced cohomology, a vector space of rank one. Restricting to the case of knots in the $3$-sphere ($|L|=1$ and $Y=S^3$) the theorem recovers previously known results on  $\tau(K)$, but for a  general manifold the results are new regardless of the number of link components.

\begin{thm}\label{thm:links} Let $L\subset  Y$ be a rationally null-homologous knot or link with $|L|$ components.  Then,  given any  non-trivial element $\alpha\otimes \Theta\in  \HFa(Y)\otimes_{\F} H^*(\mathbb{T}^{|L|-1})$, we have an invariant  $\tau_{\alpha\otimes\Theta}(Y,L)$ satisfying:
\begin{enumerate}[leftmargin=*, label=(\alph*)]
\item {\em{\bf Corollary \ref{cor:ConcordanceInvariance}} (Concordance invariance){\bf.}} \label{thm:links1} If $L$ is concordant to $L'$ in $Y\times[0,1]$, then  
\begin{equation*}
\tau_{\alpha\otimes\Theta}(Y,L)=\tau_{\alpha\otimes\Theta}(Y,L').
\end{equation*}

\item {\em{\bf Corollary \ref{cor:Crossingchange} }(Crossing change inequalities){\bf.}}\label{thm:links-crossing} If $L_-,L_+\subset Y$ differ at a single crossing, which is positive  in $L_+$ and negative in $L_-$, then
\[ \tau_{\alpha\otimes \Theta}(Y,L_-)\le \tau_{\alpha\otimes \Theta}(Y,L_+)\le \tau_{\alpha\otimes\Theta}(Y,L_-)+1.\]

\item {\em{\bf Proposition \ref{prop:SlicegenusBounds-links}} (Slice-genus bounds){\bf.}} \label{thm:links2} If  $\Sigma\subset Y\times[0,1]$ is a smoothly embedded oriented surface with boundary $ L\subset Y\times\{1\}$, then its Euler characteristic satisfies \[2|\tau_{\alpha\otimes\Theta}(Y,L)|\le |L|-\chi(\Sigma).\] 

\item {\em {\bf Proposition \ref{prop:Monotonicity}} (Monotonicity){\bf.}}\label{thm:links-monotoncity} If $\Theta'=\iota_x(\Theta)$, where  $\iota_x$ denotes the interior product with a class $x\in H_1(\mathbb{T}^{|L|-1})$, then 
\[ \tau_{\alpha\otimes \Theta'}(Y,L)\le \tau_{\alpha\otimes \Theta}(Y,L)\le \tau_{\alpha\otimes\Theta'}(Y,L)+1.\]
In particular, if $L\subset S^3$ and $\tautop(L)$ and $\taubot(L)$ denote the invariants corresponding to the unique elements in $H^*(\mathbb{T}^{|L|-1})$ of maximal and minimal grading, respectively, then
\[ \taubot(L)\le \tau_{\Theta}(L)\le \tautop(L)\le \taubot(L)+|L|-1.\]

\item {\em {\bf Theorem \ref{thm:linkgenus}} (Definite 4-manifold bound){\bf.}}\label{thm:links-negdef} Let $W$ be a smooth, oriented 4-manifold with $b_2^+(W)=b_1(W)=0$, and $\partial W=S^3$. If $\Sigma\subset W$ is a smoothly embedded oriented surface with boundary a link $L\subset \partial W$, then
$$2\tau_\Theta(L)+[\Sigma]^2+|[\Sigma]|\leq |L|-\chi(\Sigma).$$
Here $|[\Sigma]|$ is the $L_1$-norm of the homology class $[\Sigma]\in H_2(W,\partial W)\cong H_2(W)$.

\item {\em {\bf Theorem \ref{thm:altLinks}} (Alternating links){\bf.}} \label{thm:links-alternating} Suppose $L\subset S^3$ is an alternating link of $|L|$ components, and $\Theta\in \HFa(\#^{|L|-1}S^1\times S^2)$ is a class with grading $k$.   Then $\tau_\Theta(L)= k-\frac{\sigma}{2}$, where $\sigma(L)$ is the signature. In particular, 

\begin{equation*}
\tautop(L)= \frac{|L|-\sigma(L)-1}{2} \;\;\;\; \text{ and } \;\;\;\; \taubot(L)=\frac{-|L|-\sigma(L)+1}{2}.
\end{equation*}

\item {\em{\bf Proposition \ref{prop:quasipositive}} (The Local Thom and Milnor Conjectures){\bf.}}  \label{thm:milnor}  For a link $L\subset S^3$ bounding a complex curve in $B^4\subset \C^2$ or, equivalently, possessing a quasipositive braid representative, we have
\begin{equation*}
\ \ \ \ \ \tautop(L)= g_4(L):= \mathrm{min} \left\{\ \frac{|L|-\chi(\Sigma)}{2} \;\Big| \;\Sigma\subset B^4, \mathrm{smooth, oriented, with}\ \partial \Sigma=L\right\}, 
\end{equation*}
and the minimum is realized by any complex curve in $B^4$  bounded by $L$.

\item {\em{\bf Theorem \ref{thm:SQP}} (Detection of fibered strongly quasipositive links){\bf.}}  \label{thm:links-SQP} If  $L\subset S^3$ is fibered, then $L$ is strongly quasipositive if and only if 
\[\tautop(L)= g_4(L) = g_3(L),\]
where $g_3(L)$ is defined analogously to $g_4(L)$, but with surfaces embedded in $S^3$.
\end{enumerate}
\end{thm}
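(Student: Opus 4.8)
The plan is to collapse the statement to a single equivalence and then run it through the standard contact-geometric characterization of strongly quasipositive fibered links. First, for any oriented link $L\subset S^3$ one has the a priori inequalities $\tautop(L)\le g_4(L)\le g_3(L)$: the left inequality is the slice–genus bound of Proposition \ref{prop:SlicegenusBounds-links} (equivalently Theorem \ref{thm:linkgenus} with $W=B^4$) applied to the top-graded class $\Theta\in H^*(\T^{|L|-1})$, and the right inequality follows by pushing a maximal-Euler-characteristic Seifert surface into $B^4$. Hence the conjunction $\tautop(L)=g_4(L)=g_3(L)$ is equivalent to the single equality $\tautop(L)=g_3(L)$, and it suffices to prove that a fibered link $L$ is strongly quasipositive if and only if $\tautop(L)=g_3(L)$.

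\emph{Floer input.} A fibered link $L$ presents $S^3$ as an open book with page the fiber surface, hence equips $S^3$ with a supported contact structure $\xi_L$; I would show
\[
\tautop(L)=g_3(L)\quad\Longleftrightarrow\quad c(\xi_L)\neq 0\in\HFa(-S^3)\cong\F .
\]
This is proved by passing to the knotification $\kappa(L)\subset \#^{|L|-1}\SoneStwo$: one checks $\kappa(L)$ is a fibered knot whose top Alexander grading is exactly $g_3(L)$, so by Ni's detection of fiberedness the top Alexander-graded summand of $\HFKa(\kappa(L))$ is one-dimensional, and by the Ozsv\'ath--Szab\'o description of the contact class its generator is identified with $c(\xi_L)$ in the relevant filtration. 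The filtration invariant $\tautop(L)$ attains its maximum possible value $g_3(L)$ precisely when that generator is not a boundary in the associated graded, i.e.\ precisely when $c(\xi_L)\neq 0$. This is the link analogue of Hedden's identification of $\tau=g$ with non-vanishing of the contact invariant for fibered knots, and carrying it out requires matching the grading conventions used to define $\tautop$ (in particular the shift by $\tfrac12(1-|L|)$ and the identification $\HFa(\#^{|L|-1}\SoneStwo)\cong\HFa(S^3)\otimes_\F H^*(\T^{|L|-1})$).

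\emph{Classical input and conclusion.} By Eliashberg's classification, $\xi_L$ is either the tight structure $\xi_{\mathrm{std}}$ or overtwisted; since the Ozsv\'ath--Szab\'o contact invariant vanishes for overtwisted structures and generates $\HFa(-S^3)$ for $\xi_{\mathrm{std}}$, we get $c(\xi_L)\neq 0\iff\xi_L=\xi_{\mathrm{std}}$. Finally, a fibered link in $S^3$ supports $\xi_{\mathrm{std}}$ if and only if it is strongly quasipositive: if $L$ is strongly quasipositive and fibered, its fiber is a quasipositive surface, built by iterated positive Murasugi sum, so its open book is obtained from that of the unknot by positive stabilizations and supports $\xi_{\mathrm{std}}$; conversely, by Giroux--Goodman the fiber of any fibered link is obtained from a disk by plumbing and deplumbing Hopf bands, and supporting $\xi_{\mathrm{std}}$ forces all of these to be positive plumbings, so the fiber is quasipositive and $L$ is strongly quasipositive. (For knots this is precisely Hedden's theorem in \emph{Notions of positivity and the Ozsv\'ath--Szab\'o concordance invariant}; the link version invokes Rudolph's quasipositive plumbing calculus.) Chaining the three steps yields
\[
L\ \text{strongly quasipositive}\iff \xi_L=\xi_{\mathrm{std}}\iff c(\xi_L)\neq 0\iff \tautop(L)=g_3(L)\iff \tautop(L)=g_4(L)=g_3(L),
\]
which is the theorem; as a consistency check, the implication ``strongly quasipositive $\Rightarrow\tautop(L)=g_4(L)$'' is Proposition \ref{prop:quasipositive}.

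\emph{Main obstacle.} The delicate step is the Floer input: transporting Hedden's knot-level identification of $\tau=g$ with $c(\xi)\neq 0$ to links through the knotification and the $H^*(\T^{|L|-1})$-grading bookkeeping, and verifying that Ni's fiberedness theorem applies to $\kappa(L)$ with the expected one-dimensional top summand. The classical characterization used in the last step is also not entirely routine in the multi-component setting, since it needs Rudolph's quasipositive plumbing theory and the Giroux--Goodman structure theorem for links rather than the cleaner statements usually quoted for knots.
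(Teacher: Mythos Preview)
Your proposal addresses only part (h); the other parts are independent results proved elsewhere in the paper, so I restrict attention to (h).

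Your chain of equivalences
\[
\text{SQP}\iff \xi_L=\xi_{\mathrm{std}}\iff c(\xi_L)\neq 0\iff \tautop(L)=g_3(L)
\]
is exactly the one the paper runs (it proves $(1)\Rightarrow(4)$ via Proposition~\ref{prop:quasipositive} and then $(4)\Rightarrow(3)\Rightarrow(2)\Rightarrow(1)$), and your treatment of the classical steps matches the paper's use of the Giroux correspondence together with Rudolph's Murasugi-sum quasipositivity calculus.

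There is, however, a genuine gap in your ``Floer input''. You assert that the generator of the top Alexander-graded summand of $\HFKa(\kappa(L))$ ``is identified with $c(\xi_L)$''. What is immediate from the Ozsv\'ath--Szab\'o description is that this generator is identified with $c(\xi_{\kappa(L)})$, the contact invariant of the open book on $\#^{|L|-1}\SoneStwo$ coming from the fibered knot $\kappa(L)$. To relate this to $c(\xi_L)\in\HFa(-S^3)$ you need an additional lemma: the fiber surface of $\kappa(L)$ is obtained from that of $L$ by plumbing on $|L|-1$ untwisted annuli (fibers of the two-component $S^1\times\{p,q\}\subset\SoneStwo$), so that $\xi_{\kappa(L)}\simeq \xi_L\#\xi_{\mathrm{std}}$. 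The product formula then gives $c(\xi_{\kappa(L)})=c(\xi_L)\otimes c(\xi_{\mathrm{std}})$, and since $c(\xi_{\mathrm{std}})\neq 0$ one concludes $c(\xi_{\kappa(L)})\neq 0\iff c(\xi_L)\neq 0$. This is the content of the paper's Lemma~\ref{Lemma:contactsum}, and without it your identification is unjustified.

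Two smaller points. First, you invoke Ni's theorem for the one-dimensionality of the top summand, but that is the converse (rank one implies fibered); the direction you need is the easier Ozsv\'ath--Szab\'o computation for fibered knots. Second, your claim that $\tautop=g_3$ iff the top generator ``is not a boundary in the associated graded'' skips a step: one must know that the homology class of that generator, when nonzero, is precisely $\Theta_{top}$. The paper handles this by first proving $\tautop=\tau^*_{c(\xi_{\mathrm{std}})}$ (Proposition~\ref{prop:top/std}) and then using the duality $\tau^*_\varphi(Y,K)=-\tau_\varphi(-Y,K)$; your route via Maslov gradings can also be made to work, but requires arguing that the top-filtration generator sits in the unique top Maslov grading supported by $\HFa(\#^{|L|-1}\SoneStwo)$.
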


During the course of our work,  other definitions of $\tau$ were formulated for links in $S^3$ using grid homology by \os-Stipsicz \cite{GridBook} and Cavallo \cite{Cavallo}, respectively, and several of the properties and applications listed above have been established in that context.  We compare the specialization of our invariants to links in $S^3$  with theirs in Subsection \ref{subsec:comparison} and prove that $\tau_{top}(L)$ equals Cavallo's invariant $\tau(L)$ and \os-Stipsicz's invariant $\tau_{max}(L)$.  See Theorem \ref{thm:tausequal}.  

An additional  feature of  our  invariants is a Bennequin type inequality.  In the special case of links in the 3-sphere, it states that:
\[\tb(\mathcal{L})+\rot(\mathcal{L})+|L|-1\leq 2\tautop(L)-1,\]
for any Legendrian representative of $L$ in the standard contact structure on $S^3$.  Combined with the slice-genus bound for $\tau_{top}(L)$ above, we obtain a refinement of Rudolph's well-known slice-Bennequin bound \cite{Rudolph1993}.  We will establish the Bennequin bound for $\tautop(L)$ in  \cite{4Dtight}, where we use the relative adjunction inequality in combination with a Bennequin bound from \cite{tbbounds} to prove a slice-Bennequin inequality for contact manifolds with non-vanishing contact invariants.

In another direction, we can extend Theorem \ref{thm:relativeadjunction}  to the situation where $K_1$ and $K_2$ are only rationally homologous, i.e.\ some multiples of their respective homology classes agree in $H_1(W)$.  This allows us to study an analogue of the slice genus for knots that don't bound {\em any} surface in a given 4-manifold with boundary.  This extension is motivated by the case of a rational homology 3-sphere times an interval, and the study of a rational analogue of slice genus for knots in a non-trivial homology class.   Ni and Wu \cite{Ni-Wu} proved the remarkable result that Floer simple knots in $L$-spaces minimize the rational Seifert genus of any knot in their homology class.  The general version of Theorem \ref{thm:relativeadjunction} allows us to show  that Floer simple knots moreover have rational slice genus equal to their rational Seifert genus.  The proof of this extension is more technical, and will be taken up in a forthcoming paper.\\

\noindent {\bf Outline:} The paper is organized as follows. In Section \ref{sec:background},  we define and recall some elementary properties of our invariants, and compute them for a simple example. In Section \ref{sec:tools}, we outline the strategy for our proof of Theorem \ref{thm:relativeadjunction} and establish some key tools for its implementation.  Specifically,  we extend the K\"unneth theorem for the Floer homology of connected sums of 3-manifolds to cobordisms,   prove a vanishing result  for the cobordism maps in the presence of a homologically essential surface, and describe the relationship between $\tau_\beta$ invariants and the maps on Floer homology induced by $2$-handle cobordisms.  Section \ref{sec:mainproof} proceeds with the proof of Theorem \ref{thm:relativeadjunction} and Section \ref{sec:applications} includes applications and examples, including the results listed in Theorem \ref{thm:links}.   \\

\noindent {\bf Acknowledgements:}  This paper evolved over many years, and enjoyed the benefit of interest and input from a number of people, whom we warmly thank: John Baldwin, Inanc Baykur, Alberto Cavallo, Georgi Gospodinov,  Eli Grigsby, Miriam Kuzbary, Adam Levine,  Chuck Livingston,  Tom Mark, Peter Ozsv{\'a}th, Olga Plamenevskaya,   Danny Ruberman, Sucharit Sarkar, Linh Truong, Zhongtao Wu, and Ian Zemke.  KR  also  thanks Bryn Mawr College for hosting her as a research associate, and MH thanks MSRI and AIM for the supportive environments provided for portions of this work. Finally, we thank the referee for a thorough reading and many helpful suggestions.
  \\

\section{Background on Heegaard Floer Theory}\label{sec:background} In this article, all manifolds are assumed to be oriented and knots are assumed to be both oriented and rationally null-homologous.   Knots and $3$-manifolds will also be assumed to be pointed, though we will  typically omit this structure from the notation and discussion. Similarly, cobordisms between pointed  $3$-manifolds will implicitly be equipped with an oriented path between basepoints.  The role of the basepoints and paths is essential for the functoriality of Heegaard Floer homology, see \cite{JTZ,ZemkeGraphCob}.

We assume the reader has a basic familiarity with Heegaard Floer theory and knot Floer homology at the level of \cite{HolDisk, Knots, FourBall}. This article is, in a sense, the sequel of \cite{tbbounds}. Here, however, we use the more general construction of knot Floer homology for rationally null-homologous knots, and in the first subsection we recall and clarify the structure of the theory in this setting; see \cite{Hedden-Levine-surgery} for further details.  Having done this, we turn to the definition and elementary properties of the generalized $\tau$ invariants, which we collect in Subsection  \ref{subsec:taudef}.  We then compute a simple example of our invariants for a knot in $\SoneStwo$, which will ground the discussion moving forward.

\subsection{Heegaard Floer homology and the knot filtration} 

In \cite{HolDisk}, Ozsv\'ath and Szab\'o define a complex $\CFinf(Y)$ associated to a pointed Heegaard diagram $(\Sigma,\alphas,\betas,w)$ for a (pointed) 3-manifold $Y$. This complex is generated over $\F=\Z/2\Z$ by elements $[\x,i]$, where $\x\in\Ta\cap\Tb$ is an intersection point between Lagrangian tori specified by the Heegaard curves in the $g$-fold symmetric product of $\Sigma$, and $i$ is an integer. The differential is given by $$\partial [\x,i]=\sum_{y\in\Ta\cap\Tb}\sum_{\substack{ \phi\in\pi_2(\x,\y) \\ \mu(\phi)=1}}\#\widehat{\mathcal{M}}(\phi)[\y,i-n_w(\phi)],$$ where $\#\widehat{\mathcal{M}}(\phi)$ denotes the number of points, modulo two, in the unparameterized moduli space of pseudo-holomorphic disks connecting $\x$ to $\y$ in the homotopy class $\phi$, and $n_w(\phi)$ is the algebraic intersection number of such a disk with the complex codimension one subvariety $V_w$ of the symmetric product consisting of unordered tuples of points that contain the basepoint $w$.  This complex is the Lagrangian Floer complex 
 of the pair $(\Ta,\Tb)$ with a twisted coefficient system coming from a distinguished $\Z$ summand of the fundamental group of the path space.  As such, it 
 has the structure of a free $\F[\Z]=\F[U,U^{-1}]$ module, determined by the action of the generator: $U\cdot[\x,i]=[\x,i-1]$.  The complex admits a direct sum decomposition indexed by $\SpinC$ structures on $Y$, and we denote the summand corresponding to a $\SpinC$ structure $\spinc$ by $\CFinf(Y,\spinc)$ 
 
Positivity of intersections between complex subvarieties of complementary dimension implies that pseudo-holomorphic disks intersect $V_w$ positively, provided that the family of almost complex structures used in defining the boundary operator is integrable in a neighborhood of $V_w$.  Hence $\CFinf(Y,\spinc)$ is naturally filtered by the $i$ parameter of the generators $[\x,i]$.  
Ozsv\'ath and Szab\'o prove that the filtered homotopy type of $\CFinf(Y,\spinc)$ is an invariant of the pair $(Y,\spinc)$ and, as a consequence, they obtain a number of  invariants derived from this homotopy type.  Most notably, the complex $\CFm(Y,\spinc)$ is the subcomplex generated by elements $[\x,i]$ where $i<0$ and $\CFp(Y,\spinc)$ is the resulting quotient complex. Also featuring prominently in the theory is $\CFa(Y,\spinc)$, defined as the kernel complex of the chain map $U:\CFp(Y,\spinc)\to\CFp(Y,\spinc)$, or, alternatively, as the associated graded complex at filtration level $0$. This ``hat" complex  is the central object of study  in the present article. The homologies of these complexes are denoted $\HFinf(Y,\spinc)$, $\HFm(Y,\spinc)$, $\HFp(Y,\spinc)$ and $\HFa(Y,\spinc)$, respectively.

In \cite{Knots} and \cite{RationalSurgeries} Ozsv\'ath and Szab\'o show that an oriented rationally null-homologous knot $K\subset Y$ gives rise to an additional filtration of the above complexes.  The filtration can be interpreted geometrically in terms of relative $\SpinC$ structures on the knot complement. To understand this, let $(\Sigma,\alphas,\betas,w,z)$ be a doubly pointed (admissible) Heegaard diagram for $(Y,K)$.  The splitting of the complex $\CFinf(Y)$ along $\SpinC$ structures is defined by a map $\spinc_w(-):\Ta\cap\Tb \to \SpinC(Y)$. In \cite{RationalSurgeries}, Ozsv\'ath and Szab\'o refine this map to take values in relative $\SpinC$ structures, defined as $\SpinC$ structures on the knot complement with prescribed restriction to the boundary.  The refined map, denoted $\spinc_{w,z}(-):\Ta\cap\Tb\to\SpinC(Y,K)$, fits in a commutative diagram
$$
\xymatrix{
\Ta\cap\Tb \ar[r]^-{\spinc_{w,z}}\ar[dr]_{\spinc_{w}}& \SpinC(Y,K)\ar[d]^{G_{Y,K}}\\
& \SpinC(Y).
}
$$
where $G_{Y,K}:\SpinC(Y,K)\to \SpinC(Y)$ is a filling map described in Section 2 of \cite{RationalSurgeries}. The preimage of a $\SpinC$ structure $\spinc$ under the map $G_{Y,K}$ is endowed with a free and transitive $\Z$ action by the subgroup of $H^2(Y,K)$ generated by the Poincar{\'e} dual of the class of the meridian $\mu_K$. This action identifies the fibers $G_{Y,K}^{-1}(\spinc)$ with $\Z$ and, here again, positivity of intersections (now between pseudo-holomorphic disks and $V_z$) implies that the complex is relatively filtered by the additional $\Z$ parameter.  We denote the corresponding $\Z\oplus\Z$-filtered complex by $\CFKinf(Y,K,\spinc)$.

It is  convenient and useful to turn the relative $\Z$ filtration induced by the affine identification $\Z\cong G_{Y,K}^{-1}(\spinc)$ into an absolute filtration.  This identification, called the {\em Alexander filtration}, has the added benefit of offering a comparison between relative $\SpinC$ structures in different orbits of the action by $\PD[\mu]$ or, equivalently, different fibers of $G_{Y,K}$. To do this, we use a rational Seifert surface $S$ for $K$.

 \begin{defn} A \emph{rational Seifert surface} for a knot $K\subset Y$ of order $q$  is a compact, oriented surface $S$ with boundary, along with a map $S\to Y$ that is an embedding on the interior of $S$ and whose restriction to $\partial S$ is a map $\partial S \to K$, which is a covering map of degree $q$. We let $S$ denote the singular surface in $Y$ arising as the image of the defining map.\end{defn}
 
\noindent A rational Seifert surface gives rise to a properly embedded surface in the  complement of $K$ that intersects the boundary of its tubular neighborhood in a cable link.  One can alternatively define rational Seifert surfaces in these terms. See also \cite{Calegari-Gordon, Baker-Etnyre}.

Now, define the \emph{Alexander grading} of a relative $\SpinC$-structure $\xi\in\SpinC(Y,K)$ by $$A_{Y,K,[S]}(\xi)=\frac{\langle c_1(\xi),[S]\rangle +[\mu]\cdot[S]}{2[\mu]\cdot[S]}\in \Q.$$ Here $[\mu]\cdot[S]$ denotes the intersection pairing between $H_1(Y\setminus K)$ and $H_2(Y,K)$ induced by Lefschetz duality and excision, and $c_1(\xi)\in H^2(Y,K)$ is the relative Chern class of the relative $\SpinC$ structure.  For a generator $\x\in\Ta\cap\Tb$, define $$A_{w,z}(\x)=A_{Y,K,[S]}(\spinc_{w,z}(\x)).$$
We write $\CFKinf(Y,[S],K,\spinc)$ to denote $\CFKinf(Y,K,\spinc)$ with absolute filtration coming from $A_{Y,K,[S]}$.
 
\begin{remark}\label{rmk:dependence}
A couple of remarks are in order.  First, if $Y$ is not a rational homology 3-sphere, the Alexander grading depends on the relative homology class of the chosen rational Seifert surface $S$ for $K$, but only up to an overall shift, given by $\frac{1}{2[\mu]\cdot[S]}$ times
\[  \langle c_1(\xi), [S]-[S']\rangle = \langle c_1(\xi), i_*([S-S'])\rangle= \langle i^*c_1(\xi), [S-S']\rangle=\langle c_1(G_{Y,K}(\xi)),[S-S']\rangle \in 2\Z.\]
Here, we use that $[S]-[S']\in H_2(Y,K)$ is in the image of the inclusion induced map $i_*$ from $H_2(Y)$, together with naturality of relative Chern classes.  

Second, there are different conventions for the definition of the Alexander grading in the literature \cite{NiThurston, Ni-Wu, rationalcontact, Hedden-Levine-surgery}. Ours is consistent with \cite{Hedden-Levine-surgery}.
\end{remark}

\subsection{$\tau$ invariants and their properties}\label{subsec:taudef} Consider the complex $\CFa(Y,\spinc)$ equipped with its Alexander filtration. For $r\in\Q$ there is a subcomplex generated by $\x\in \Ta \cap \Tb$ whose Alexander grading is less than or equal to $r$:
 $$\Filt_r(Y,[S],K) =\underset{\{ \x\ |\ A_{w,z}(\x)\leq r\}}\bigoplus \!\!\!\!\!\F\langle\x\rangle.$$ 
This subcomplex includes into $\CFa(Y,\spinc)$ by $\iota_r:\Filt_{r}(Y,[S],K)\xhookrightarrow{}\CFa(Y,\spinc)$.

\begin{defn}
For a nontrivial class $\alpha$ in  $\HFa(Y,\spinc)$, $$\tau_{\alpha}(Y,[\SSurf],K)=\min \{ r\in \Q \;| \;\alpha\in \Image(I_{r})\}$$ where $I_{r}$ is the map induced on homology by $\iota_r$. 
\end{defn}

We write simply $\tau_{\alpha}(Y,K)$ or $\tau_{\alpha}(K)$ when the context is clear. Note that we could equivalently define $\tau_{\alpha}(Y,K)$ as the minimum Alexander grading of any cycle homologous to $\alpha$.  Using a duality pairing on Floer homology we also define $\tau^*_{\varphi}(Y,K)$:
\begin{defn}
For a nontrivial class  $\varphi$ in $\HFa^*(Y,\spinc)\cong\HFa_*(-Y,\spinc)$, $$\tau^*_{\varphi}(Y,[\SSurf],K)=\min\{ r\in \Q \;|\; \text{there exists}\; \beta \in \Image(I_r) \text{ such that } \langle \varphi,\beta \rangle\neq 0\}$$ where $\langle -, -\rangle$ denotes the pairing on Floer homology between $\HFa^*(Y,\spinc)$ and $\HFa_*(Y,\spinc)$. For details about this pairing see Section 2 of \cite{tbbounds}.
\end{defn}
\noindent The quantities defined above are related in the following way:
\begin{prop}[Duality]   \cite[Proposition $28$]{tbbounds}\label{prop:dualtau}  Let  $\beta$ be a nontrivial class in $\HFa(-Y,\spinc)$ then
$$\tau_{\beta}(-Y,[\SSurf],K)=-\tau^*_\beta(Y,[\SSurf],K).$$
\end{prop}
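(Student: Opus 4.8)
The plan is to unwind the definitions of $\tau_\beta$ and $\tau^*_\beta$ and show that the duality pairing $\langle -,-\rangle$ on Floer homology identifies the relevant filtration sublevels of $\HFa(-Y,\spinc)$ with the quotient complexes of $\HFa(Y,\spinc)$ under an orientation reversal. The key point is that reversing the orientation of $Y$ dualizes the chain complex $\CFa(Y,\spinc)$ in a way that also reverses the Alexander filtration: a doubly-pointed Heegaard diagram $(\Sigma,\alphas,\betas,w,z)$ for $(Y,K)$ becomes, after reversing the orientation of $\Sigma$ (equivalently swapping the roles of $\alphas$ and $\betas$), a doubly-pointed diagram for $(-Y,K)$, and under this operation the relative $\SpinC$ structure $\spinc_{w,z}(\x)$ is sent to its "conjugate", so that $A_{w,z}(\x) \mapsto -A_{w,z}(\x)$ up to an overall additive constant which one checks is zero with the conventions in place. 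Concretely, the duality pairing restricts to a perfect pairing between $\Filt_r(-Y,[\SSurf],K)$ and the quotient $\CFa(Y,\spinc)/\Filt_{<-r}(Y,[\SSurf],K)$ (or its homology-level analogue), which is exactly the statement that the annihilator of $\Image(I_r)$ for $-Y$ is $\Image(I_{<-r})$ for $Y$ under the pairing.

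First I would recall precisely, from Section 2 of \cite{tbbounds}, the construction of the pairing $\langle-,-\rangle\colon \HFa^*(Y,\spinc)\otimes \HFa_*(Y,\spinc)\to \F$ and the canonical isomorphism $\HFa^*(Y,\spinc)\cong \HFa_*(-Y,\spinc)$; these come from the duality of the chain complexes for $Y$ and $-Y$ realized at the level of a Heegaard diagram by interchanging $\alphas\leftrightarrow\betas$. Next I would verify the filtration-reversal claim: that under this chain-level duality, the subcomplex $\Filt_r(-Y,[\SSurf],K)$ is carried to the annihilator of $\Filt_{s}(Y,[\SSurf],K)$ for the appropriate threshold $s$ — the arithmetic here is just tracking how $A_{w,z}$ transforms, using the formula $A_{Y,K,[S]}(\xi)=\frac{\langle c_1(\xi),[S]\rangle+[\mu]\cdot[S]}{2[\mu]\cdot[S]}$ and the fact that conjugation sends $c_1(\xi)\mapsto -c_1(\xi)$, so $A\mapsto 1-A$... but the symmetric normalization (the "$+1$" in numerator and the denominator) is arranged precisely so the shift cancels and we get $A\mapsto -A$. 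With this in hand, $\alpha\in\Image(I_r)$ for $-Y$ translates, via the pairing, into the statement that $\alpha$ pairs nontrivially against some class surviving to filtration level $\le -r$... wait, one must be careful with strict versus non-strict inequalities, but since the Alexander gradings of generators form a discrete set, $\min$ over $\Q$ is achieved and the strict/non-strict distinction washes out.

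Then I would assemble the chain of equalities: $\tau_\beta(-Y,[\SSurf],K)=\min\{r : \beta\in\Image(I_r^{-Y})\}$; by the duality isomorphism, $\beta\in\Image(I_r^{-Y})$ iff $\beta$, viewed as a functional on $\HFa(Y,\spinc)$, is supported on $\Image(I_r^{-Y})$, which by the annihilator correspondence means $\beta$ vanishes on the sublevel $\Filt_{<-r}$ of $Y$ but not on $\Filt_{-r}$; equivalently there exists a class $\gamma\in\Image(I_{-r}^{Y})$ with $\langle\beta,\gamma\rangle\ne 0$. Comparing with the definition of $\tau^*_\beta(Y,[\SSurf],K)=\min\{s : \exists\,\gamma\in\Image(I_s^Y),\ \langle\beta,\gamma\rangle\ne 0\}$, the minimal such $s$ is exactly $-\,($the minimal $r)$, giving $\tau_\beta(-Y,[\SSurf],K)=-\tau^*_\beta(Y,[\SSurf],K)$.

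The main obstacle, and the step deserving the most care, is the filtration-reversal bookkeeping — pinning down that the orientation-reversal/duality operation sends $A_{w,z}\mapsto -A_{w,z}$ \emph{on the nose} (no additive constant) and that it interchanges sublevel subcomplexes with \emph{annihilators of} sublevel subcomplexes (rather than annihilators of quotients, or with an off-by-one shift in the threshold). This is where the choice of Alexander grading convention (the paper fixes the convention of \cite{Hedden-Levine-surgery}, cf.\ Remark~\ref{rmk:dependence}) and the precise definition of the pairing in \cite{tbbounds} must be invoked in concert; once these are aligned, the rest is formal. In fact, since this is exactly \cite[Proposition~28]{tbbounds}, the cleanest route is to cite that proof and simply note that it applies verbatim in the rationally null-homologous setting, the only change being that the Alexander filtration is now $\Q$-valued rather than $\Z$-valued, which affects nothing in the argument since the set of attained Alexander gradings is still finite.
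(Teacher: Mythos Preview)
The paper does not actually prove this proposition; it is stated as a citation of \cite[Proposition~28]{tbbounds} with no argument given. Your proposal correctly anticipates this in its final paragraph, and the sketch you provide---dualizing the filtered complex under orientation reversal and tracking the sign change in the Alexander grading---is the standard argument underlying that cited result, so your approach is sound and aligns with what the paper implicitly relies on.
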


In addition, $\tau_{\alpha}$ and $\tau^*_{\varphi}$ are additive under connected sum. Specifically, let $K_1$ and $K_2$ be knots in 3-manifolds $Y_1$ and $Y_2$, respectively, and let $K_1\# K_2$ denote their connected sum inside $Y_1\#Y_2$.

\begin{prop}[Additivity]  \cite[Proposition $3.2$]{FourBall}\label{prop:Additivity}  For any pair of non-trivial Floer classes $\alpha_i\in \HFa(Y_i,\spinc_i)$,  
$$\tau_{\alpha_1\otimes \alpha_2}(Y_1\#Y_2, K_1\# K_2)=\tau_{\alpha_1}(Y_1,K_1)+\tau_{\alpha_2}(Y_2,K_2),$$
where $\alpha_1\otimes \alpha_2$ specifies a Floer class for the connected sum under the isomorphism
$$\HFa(Y_1,\spinc_{1})\otimes_{\F} \HFa(Y_2,\spinc_{2}) \cong \HFa(Y_1\# Y_2,\spinc_{1}\#\spinc_{2}).$$
Similarly, for $\tau^*$,
$$\tau^*_{\varphi_1\otimes \varphi_2}(Y_1\#Y_2, K_1\# K_2)=\tau^*_{ \varphi_1}(Y_1,K_1)+\tau^*_{ \varphi_2}(Y_2,K_2),$$
for any pair of non-trivial classes $ \varphi_i\in \HFa^*(Y_i,\spinc_i)$.
\end{prop}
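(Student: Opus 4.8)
The plan is to prove the additivity statement (Proposition~\ref{prop:Additivity}) for $\tau_\alpha$ first, and then derive the statement for $\tau^*$ by dualizing. The starting point is the Künneth theorem for knot Floer homology of connected sums: at the chain level, one has a filtered homotopy equivalence $\CFKa(Y_1\#Y_2,[S_1\natural S_2],K_1\#K_2,\spinc_1\#\spinc_2)\simeq \CFKa(Y_1,[S_1],K_1,\spinc_1)\otimes_\F \CFKa(Y_2,[S_2],K_2,\spinc_2)$, where the Alexander filtration on the tensor product is the sum of the two Alexander filtrations (this is essentially the content of the genus-additivity of the connected-sum operation, $A_{w_1,z_1\#w_2,z_2}(\x_1\times\x_2)=A_{w_1,z_1}(\x_1)+A_{w_2,z_2}(\x_2)$, which comes from gluing rational Seifert surfaces). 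I would cite this from \cite{Knots} (and \cite{RationalSurgeries}/\cite{Hedden-Levine-surgery} for the rationally null-homologous refinement).

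Granting the filtered Künneth isomorphism, the inequality $\tau_{\alpha_1\otimes\alpha_2}(Y_1\#Y_2,K_1\#K_2)\le \tau_{\alpha_1}(Y_1,K_1)+\tau_{\alpha_2}(Y_2,K_2)$ is the easy direction: choosing cycles $x_i$ representing $\alpha_i$ with $A(x_i)=\tau_{\alpha_i}(Y_i,K_i)$, the product cycle $x_1\otimes x_2$ represents $\alpha_1\otimes\alpha_2$ and has Alexander grading equal to the sum. For the reverse inequality I would argue that a cycle in the tensor complex of Alexander grading $\le r$ representing $\alpha_1\otimes\alpha_2$ forces, after passing to the associated graded object, a decomposition witnessing that $\alpha_1$ and $\alpha_2$ sit in filtration levels $r_1,r_2$ with $r_1+r_2\le r$. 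Concretely, this is cleanest via the dual/cohomological picture: the minimal filtration level at which a class appears in homology is detected by pairing against the associated graded homology, and the tensor product of two filtered complexes has the property that $\min$-filtration-level is additive on decomposable homology classes — this is a standard homological-algebra fact about filtered tensor products over a field, and over $\F=\Z/2\Z$ there are no $\Tor$ complications. I would isolate this as a small lemma: for filtered chain complexes $C_1, C_2$ over a field with filtrations bounded below, and nonzero $\alpha_i\in H_*(C_i)$, one has $\min\{j : \alpha_1\otimes\alpha_2 \in \Image(H_*(\Filt_j(C_1\otimes C_2))\to H_*(C_1\otimes C_2))\} = \sum_i \min\{j_i : \alpha_i\in\Image(H_*(\Filt_{j_i}C_i))\}$.

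For the $\tau^*$ statement, I would use Proposition~\ref{prop:dualtau} (Duality) together with the already-established additivity of $\tau_\alpha$. Given nonzero $\varphi_i\in\HFa^*(Y_i,\spinc_i)$, dualize: $\varphi_i$ corresponds to a nonzero class $\beta_i\in\HFa(-Y_i,\spinc_i)$, and $\varphi_1\otimes\varphi_2$ corresponds to $\beta_1\otimes\beta_2\in\HFa(-Y_1\#-Y_2)$ under the Künneth isomorphism (again using that $-(Y_1\#Y_2)=-Y_1\#-Y_2$ and that the pairing is compatible with the Künneth decomposition, which is checked in \cite{tbbounds}). Then
\begin{align*}
\tau^*_{\varphi_1\otimes\varphi_2}(Y_1\#Y_2,K_1\#K_2)
&= -\tau_{\beta_1\otimes\beta_2}(-Y_1\#-Y_2,K_1\#K_2)\\
&= -\tau_{\beta_1}(-Y_1,K_1)-\tau_{\beta_2}(-Y_2,K_2)\\
&= \tau^*_{\varphi_1}(Y_1,K_1)+\tau^*_{\varphi_2}(Y_2,K_2),
\end{align*}
where the first and last equalities are Proposition~\ref{prop:dualtau} and the middle one is the just-proved additivity of $\tau_\alpha$.

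The main obstacle I expect is not any single inequality but verifying that the Künneth equivalence respects the \emph{Alexander} filtration on the nose for rationally null-homologous knots — i.e.\ that the relative $\SpinC$ bookkeeping and the choice of rational Seifert surfaces ($S_1\natural S_2$ on the connected sum) are compatible so that Alexander gradings genuinely add. Once that compatibility is in hand, the homological-algebra lemma on filtered tensor products over $\F$ and the duality reduction for $\tau^*$ are routine. (Since this proposition is quoted from \cite{FourBall} with the rationally null-homologous generalization, I would present it largely by reference, spelling out only the filtered-tensor-product lemma and the duality argument in detail.)
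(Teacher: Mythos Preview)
The paper does not supply its own proof of this proposition; it is stated with a citation to \cite[Proposition~3.2]{FourBall} and accompanied only by Remark~\ref{rmk:Seifertsum}, which explains how to build a rational Seifert surface for $K_1\#K_2$ from those of the summands. Your approach---the filtered K\"unneth theorem for the knot filtration, the elementary filtered-tensor-product lemma over a field, and the reduction of the $\tau^*$ case to the $\tau$ case via Proposition~\ref{prop:dualtau}---is correct and is essentially the standard argument underlying the cited result. You also correctly flag the one place requiring care in the rationally null-homologous setting (compatibility of Alexander gradings with the connected-sum Seifert surface), and Remark~\ref{rmk:Seifertsum} in the paper is precisely the input needed to resolve it.
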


\begin{remark} \label{rmk:Seifertsum} For null-homologous knots, a Seifert surface for $K_1\#K_2$ is given by the boundary sum $\SSurf_1\natural\SSurf_2$ of Seifert surfaces $\SSurf_1$ and $\SSurf_2$ used for $K_1$ and $K_2$, respectively. More generally, if $K_1$ and $K_2$ are rationally null-homologous of orders $q_1$ and $q_2$ respectively, a rational Seifert surface for $K_1\#K_2$ can be constructed as a band sum of $\frac{\lcm(q_1,q_2)}{q_1}$ copies of $\SSurf_1$ and $\frac{\lcm(q_1,q_2)}{q_2}$ copies of $\SSurf_2$ along $\lcm(q_1,q_2)$ bands. 
\end{remark}

\noindent Since $\tau_\alpha$ and $\tau^*_\varphi$ are defined in terms of the Alexander filtration, whose homotopy type is an invariant of the knot, they are also invariant in an appropriate sense.  We clarify this with the following proposition.  

\begin{prop}[Functoriality] \label{prop:naturality} Let $f:(Y,K,w)\rightarrow (Y',K',w')$ be a diffeomorphism of pointed knots, and $\alpha\in \HFa(Y,w)$ be a non-trivial Floer homology class.  Then 
\[\tau_{\alpha}(Y,[S],K)= \tau_{f_*(\alpha)}(Y',[S'],K'),\]
where $f_*(\alpha)$ is the image of $\alpha$ under the diffeomorphism-induced map on Floer homology $f_*:\HFa(Y,w)\rightarrow \HFa(Y',w')$, and $[S']=f_*[S]$.
\end{prop}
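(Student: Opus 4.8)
The plan is to reduce the statement to the naturality of the entire filtered chain homotopy type under diffeomorphism, which is already built into the foundations of Heegaard Floer theory. First I would observe that a diffeomorphism of pointed knots $f:(Y,K,w)\to(Y',K',w')$ carries a doubly pointed Heegaard diagram $(\Sigma,\alphas,\betas,w,z)$ for $(Y,K)$ to a doubly pointed Heegaard diagram $(f(\Sigma),f(\alphas),f(\betas),w',z')$ for $(Y',K')$, and induces an identification of the generators $\Ta\cap\Tb$ with $\Ta'\cap\Tb'$ that intertwines the counts of pseudo-holomorphic disks (for the pushed-forward almost complex structure) and the intersection numbers $n_w$, $n_z$. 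Hence $f$ induces an isomorphism of chain complexes $\CFKinf(Y,K,\spinc)\xrightarrow{\sim}\CFKinf(Y',K',f_*\spinc)$ commuting with the $U$-action and, crucially, respecting both the algebraic ($i$) filtration and the relative Alexander ($\Z$) filtration.

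The next step is to check that this isomorphism is compatible with the \emph{absolute} Alexander gradings once we fix the rational Seifert surface data, i.e.\ that $A_{w',z'}(f(\x))=A_{w,z}(\x)$ provided we use $[S']=f_*[S]$. This is immediate from the definition $A_{Y,K,[S]}(\xi)=\frac{\langle c_1(\xi),[S]\rangle+[\mu]\cdot[S]}{2[\mu]\cdot[S]}$ together with the facts that $f$ carries $\spinc_{w,z}(\x)$ to $\spinc_{w',z'}(f(\x))$, carries the meridian $\mu_K$ to $\mu_{K'}$, and is an orientation-preserving diffeomorphism of pairs, so it preserves the relative Chern class pairing and the intersection pairing $[\mu]\cdot[S]$. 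Consequently $f$ sends the subcomplex $\Filt_r(Y,[S],K)$ isomorphically onto $\Filt_r(Y',[S'],K')$, compatibly with the inclusions $\iota_r$, $\iota_r'$ into $\CFa$.

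Passing to homology, we get a commuting square relating $I_r$ with $I_r'$ via $f_*$ on $\HFa$ and on the filtered homology groups. Therefore $\alpha\in\Image(I_r)$ if and only if $f_*(\alpha)\in\Image(I_r')$, and taking the minimum over $r\in\Q$ gives $\tau_\alpha(Y,[S],K)=\tau_{f_*(\alpha)}(Y',[S'],K')$, which is the claim. The only genuine subtlety — and the step I would write out most carefully — is the independence from the auxiliary choices (almost complex structure, Heegaard diagram): one must invoke the standard fact that the filtered homotopy type of $\CFKinf$ is a well-defined invariant of $(Y,[S],K,\spinc)$, so that the diffeomorphism-induced map on $\HFa$ is the usual naturality map and does not depend on which diagram was used to compute it. This is exactly the naturality package of \cite{JTZ, ZemkeGraphCob} referenced at the start of Section~\ref{sec:background}, specialized to the knot filtration, so no new work is needed beyond assembling these ingredients.
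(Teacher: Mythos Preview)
Your proposal is correct and follows essentially the same approach as the paper: both arguments ultimately rest on the naturality package of \cite{JTZ} (and its lift to the filtered setting) to ensure that the diffeomorphism-induced map on $\HFa$ is well-defined and respects the knot filtration. The paper's proof is terser, citing \cite{JTZ} and \cite{HM} for the transitive system and its lift to the $\Z\oplus\Z$-filtered homotopy category, whereas you unpack more of the mechanics---pushing forward a specific diagram and explicitly verifying that the absolute Alexander grading is preserved under $[S]\mapsto f_*[S]$---before invoking the same naturality results; this is a difference of exposition rather than of strategy.
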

\begin{proof} This is a consequence of the naturality of knot Floer homology under diffeomorphisms established by Juh\'asz-Thurston-Zemke \cite{JTZ}. More precisely, that article shows how to use the Heegaard Floer construction to associate a transitive system of groups to a knot complement, regarded as a sutured manifold with two parallel meridional sutures, and describes how diffeomorphisms act on this invariant \cite[Definition 2.42]{JTZ}.  The transitive system and diffeomorphism action can be lifted to the homotopy category, and indeed to the $\Z\oplus\Z$-filtered homotopy category.  See \cite[Proposition 2.3]{HM} for details on lifting the ``infinity" invariant of a pointed 3-manifold to the $\Z$-filtered homotopy category, where the filtration is given by powers of $U$.  The extension to the $\Z\oplus\Z$-filtered homotopy category follows in a similar  manner.  Specializing to the induced $\Z$-filtration of the hat complex induced by the pointed knot, from whence the $\tau$ invariants are derived, we obtain the claimed result.
\end{proof}

In general, knowing $\tau_\alpha(Y,K)$ and $\tau_\beta(Y,K)$ does not determine $\tau_{\alpha+\beta}(Y,K)$.  The following proposition, however, follows easily from the definition.

\begin{prop}[Subadditivity] \label{prop:sum}For  classes $\alpha,\beta\in \HFa(Y)$ and any knot $K\subset Y$, we have
\[  \tau_{\alpha+\beta}(Y,K)\le \mathrm{max}\{\tau_\alpha(Y,K), \tau_\beta(Y,K)\}\]
\end{prop}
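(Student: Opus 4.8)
The plan is to argue directly from the definition of $\tau$ as the minimal Alexander filtration level at which a class becomes a boundary (equivalently, as the minimum over cycles in a homology class of their Alexander gradings). First I would set $r_\alpha = \tau_\alpha(Y,K)$ and $r_\beta = \tau_\beta(Y,K)$, and without loss of generality assume $r_\alpha \le r_\beta$, so that $\max\{r_\alpha, r_\beta\} = r_\beta$. By definition of $\tau_\alpha$, we have $\alpha \in \Image(I_{r_\alpha})$, and since the subcomplexes $\Filt_r$ are nested, $\Filt_{r_\alpha}(Y,[S],K) \subseteq \Filt_{r_\beta}(Y,[S],K)$, so the inclusions factor and hence $\alpha \in \Image(I_{r_\beta})$ as well; likewise $\beta \in \Image(I_{r_\beta})$ directly.

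The key step is then that $\Image(I_{r_\beta})$ is an $\F$-subspace of $\HFa(Y)$: since $\iota_{r_\beta}$ is a chain map between $\F$-complexes, the induced map $I_{r_\beta}$ on homology is $\F$-linear, so its image is closed under addition. Therefore $\alpha + \beta \in \Image(I_{r_\beta})$, which by definition of $\tau_{\alpha+\beta}$ gives $\tau_{\alpha+\beta}(Y,K) \le r_\beta = \max\{\tau_\alpha(Y,K), \tau_\beta(Y,K)\}$, as desired. One should note the mild degenerate case: if $\alpha+\beta = 0$ the statement is vacuous (or one adopts the convention $\tau_0 = -\infty$), and if exactly one of $\alpha,\beta$ vanishes the inequality is immediate from the convention; the content is the case where all three classes are nonzero, handled above.

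There is really no substantive obstacle here — the proposition is genuinely a one-line consequence of nestedness of the filtration plus linearity of an induced map on homology, which is exactly why the authors flag it as following ``easily from the definition.'' The only thing to be a little careful about is the choice of rational Seifert surface class $[S]$: the Alexander grading, hence the filtration $\Filt_r(Y,[S],K)$, depends on $[S]$ up to an overall shift (Remark \ref{rmk:dependence}), but since the same $[S]$ is used throughout the argument this causes no trouble, and the inequality is insensitive to the shift anyway. So I would simply fix $[S]$ once and for all at the start and suppress it from the notation, matching the paper's convention of writing $\tau_\alpha(Y,K)$.
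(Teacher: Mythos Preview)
Your argument is correct and is exactly the one-line reasoning the paper has in mind when it says the proposition ``follows easily from the definition'' (the paper gives no further proof). One small verbal slip: in your opening sentence you say ``the minimal Alexander filtration level at which a class becomes a boundary,'' but you mean the minimal level at which the class is \emph{represented} (i.e., lies in $\Image(I_r)$); your parenthetical and the body of the argument use the correct formulation, so this does not affect anything.
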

\noindent When using this, one should extend the definition so that $\tau_\alpha(K)=-\infty$ when $\alpha=0$.
\subsection{Calculations for knots in $\#^\ell\SoneStwo$}
We briefly describe our invariants for knots in $\#^\ell\SoneStwo$ and provide a simple example calculation that will be used as a guide throughout the paper.

In \cite[Section 9]{HolDisk} Ozsv\'ath and Szab\'o compute $\HFa(\SoneStwo)$ and show that it is  generated by two elements  in the $\SpinC$ structure with trivial Chern class, one of Maslov grading $\frac{1}{2}$ and one of Maslov grading $-\frac{1}{2}$. Let $\theta_{+}$ and $\theta_{-}$ denote the generators of highest and lowest Maslov gradings, respectively. The K\"unneth formula for the Heegaard Floer homology of connected sums \cite[Theorem 1.5]{HolDiskTwo} implies that $\HFa(\#^\ell\SoneStwo)$ is generated by $\ell$-fold tensor products $\theta_{\epsilon_1}\otimes\ldots\otimes \theta_{\epsilon_\ell}$ where each $\epsilon_i\in\{+,-\}$. 

\begin{example} \label{Whitehead}
\begin{figure}
\begin{subfigure}{.25\textwidth}
\centering
\includegraphics[height=1in]{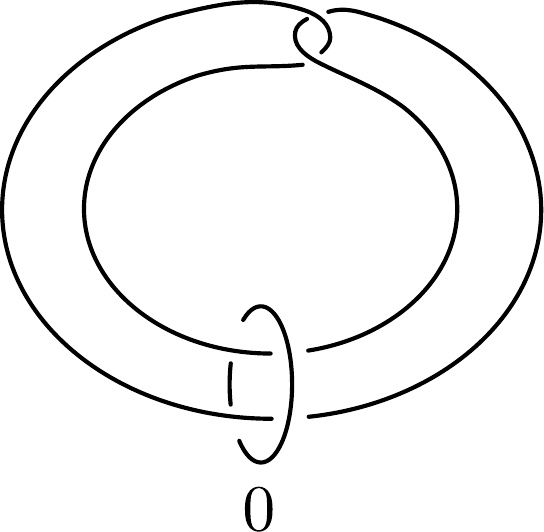}
\caption{\centering$Wh^+\subset \SoneStwo$.}
\label{fig:Wh+}
\end{subfigure}
\quad
\begin{subfigure}{.3\textwidth}
\centering
 \includegraphics[height=1in]{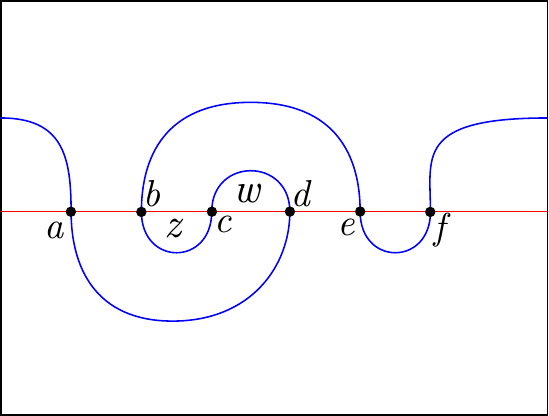} 
 \caption{\centering An admissible diagram for $Wh^+ \subset S^1\times S^2$.}
 \label{fig:Whitehead}
 \end{subfigure}
 \quad
\begin{subfigure}{.3\textwidth}
\centering
\includegraphics[height=.75in]{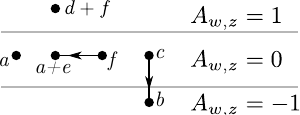}
\caption{\centering$\CFa(S^1\times S^2)$ with grading $A_{w,z}$.}
\label{fig:Agrading}
\end{subfigure}
\caption{}
\end{figure}

Given a knot $K$ in $\#^\ell\SoneStwo$, each generator of Floer homology has a corresponding invariant: $\tau_{\theta_{\epsilon_1}\otimes\ldots\otimes \theta_{\epsilon_\ell}}(\#^\ell\SoneStwo,K)$. Since the Maslov grading is additive under tensor products, there is a unique generator of highest Maslov grading, which we call $\Theta_{top}=\theta_{+}\otimes \ldots \otimes \theta_{+}$ and a unique generator of lowest Maslov grading, $\Theta_{bot}=\theta_{-}\otimes \ldots \otimes \theta_{-}$. We write $\tautop(\#^\ell\SoneStwo,K)$ for the invariant associated to $\Theta_{top}$ and $\taubot(\#^\ell\SoneStwo,K)$ for the invariant associated to $\Theta_{bot}$.

Figure \ref{fig:Wh+} is a diagram of the positively clasped Whitehead knot $Wh^+$ in $\SoneStwo$ and Figure \ref{fig:Whitehead} is an admissible, doubly pointed Heegaard diagram for this knot. This knot is an example of a $(1,1)$ knot and therefore its Floer Homology can be calculated combinatorially from the Heegaard diagram -- see, for instance, \cite[Section 6.2]{Knots} or \cite{Goda}. The relative Alexander gradings can be computed, for $\phi\in\pi_2(\x,\y)$, by $$A_{w,z}(\x)-A_{w,z}(\y)=n_z(\phi)-n_w(\phi).$$ 
Moreover, all holomorphic disks (there are eight) are determined by the Riemann mapping theorem.  After a filtered change of basis we obtain the complex in Figure \ref{fig:Agrading}. The absolute Alexander gradings of the generators can be computed using  \cite[Proposition 1.3]{Hedden-Levine-surgery} or, alternatively, by requiring the associated graded homology to be symmetric about $A_{w,z}=0$.  Examining disks in the diagram shows that $[d+f]$ has higher relative Maslov grading than $[a]$ -- for instance, in the diagram we see a disk $\phi$ from $d$ to $a$ that passes over the $z$ basepoint thus the relative grading is given by $gr(d,a)=\mu(\phi)-2n_w(\phi)=1$. Thus, $\tau_{top}(Wh^+)=1$ and $\tau_{bot}(Wh^+)=0$.
 \end{example}

\section{Collecting the main ingredients}\label{sec:tools}
Given a cobordism from $Y_1$ to $Y_2$ containing a properly embedded surface $\Sigma$, the strategy for proving Theorem \ref{thm:relativeadjunction} is to ``cap" both ends with $2$-handle cobordisms, attached along neighborhoods of the knots  $-K_1\sqcup K_2$ on the boundary of $\Sigma$.  
We then use our assumption that the cobordism map sends $\alpha$ to $\beta$ in conjunction with a 
 result indicating that the $\tau$ invariants control the maps on Floer homology induced by $2$-handle cobordisms with sufficiently large framings. This yields conditions, expressed in terms of the difference $\tau_\beta(K_2)-\tau_\alpha(K_1)$, for the capped cobordism map to be non-trivial.

We can factor the capped cobordism, however, through a neighborhood of its incoming end joined to the closed surface one gets from $\Sigma$ by capping $-K_1$ and $K_2$ with the core disks of the $2$-handles.   
 We then employ a vanishing result  for the cobordism map associated to this factorization, expressed in terms of the genus of $\Sigma$,   in conjunction with the  conditions for its non-triviality above.  This bounds the difference of $\tau$ invariants by the genus of $\Sigma$ and the homological terms appearing in the relative adjunction inequality.   

In this section, we pave the way for employing the strategy outlined above by establishing the requisite technical tools.   The first is a product formula, Theorem \ref{thm:Kun}, for the cobordism maps associated to 4-manifolds obtained by a  surgery operation along properly embedded paths which we call the {\em arc sum}.   The second is the vanishing result for cobordisms containing a homologically essential surface, Theorem \ref{thm:cobordismbound}.   Finally,  we describe the manner in which the $\tau$ invariants constrain the behavior of cobordism maps associated to 2-handle attachments.  This is the content of  Proposition \ref{prop:FourDInterp}.

\subsection{Splittings of $\SpinC$ structures}  It will be useful throughout to understand when a $\SpinC$ structure on a 4-manifold can be determined by its restrictions to pieces glued along a separating 3-manifold.

\begin{lemma}\label{lemma:splitting}
Let $W$ be a 4-manifold, and suppose that $Y$ is a separating 3-manifold embedded in $W$ such that $W=W_1\cup_{Y}W_2$. Each $\SpinC$ structure $\spinct$ on $W$ has restrictions $\spinct_1=\spinct|_{W_1}$ and $\spinct_2=\spinct|_{W_2}$. If the map $$(\iota_1)^*-(\iota_2)^*:H^1(W_1)\oplus H^1(W_2)\to H^1(Y)$$ in the Mayer-Vietoris sequence is surjective, then $\spinct$ is uniquely determined by its restrictions to $W_1$ and $W_2$. That is, we may unambiguously write $\spinct=\spinct_1\#\spinct_2$.
\end{lemma}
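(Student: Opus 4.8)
The plan is to reduce the statement to a fact about second cohomology and then read it off the Mayer--Vietoris sequence. Recall that for any manifold $M$ the set $\SpinC(M)$ is an affine space over $H^2(M;\Z)$, and that restriction to a codimension-zero submanifold is equivariant for the corresponding restriction homomorphism on $H^2$. Hence, if $\spinct$ and $\spinct'$ are two $\SpinC$ structures on $W$ with $\spinct|_{W_i}=\spinct'|_{W_i}$ for $i=1,2$, then $\spinct'=\spinct+e$ for a unique class $e\in H^2(W;\Z)$, and, since the $H^2(W_i;\Z)$-action on $\SpinC(W_i)$ is free, the class $e$ must restrict to $0$ in both $H^2(W_1;\Z)$ and $H^2(W_2;\Z)$. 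So it suffices to show that, under the surjectivity hypothesis, any class $e\in H^2(W;\Z)$ restricting trivially to $W_1$ and $W_2$ is zero.

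For this I would invoke the Mayer--Vietoris sequence of the decomposition $W=W_1\cup_Y W_2$ (after thickening $Y$ to a collar $Y\times(-1,1)$, so that $W_1,W_2$ form an honest open cover). The relevant segment is
\[
H^1(W_1)\oplus H^1(W_2)\ \xrightarrow{\ (\iota_1)^*-(\iota_2)^*\ }\ H^1(Y)\ \xrightarrow{\ \partial\ }\ H^2(W)\ \xrightarrow{\ ((j_1)^*,(j_2)^*)\ }\ H^2(W_1)\oplus H^2(W_2),
\]
all with $\Z$ coefficients, where $j_i\colon W_i\hookrightarrow W$ and $\iota_i\colon Y\hookrightarrow W_i$ are the inclusions. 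A class $e$ as above lies in $\Ker\big(((j_1)^*,(j_2)^*)\big)$, hence by exactness at $H^2(W)$ in $\Image(\partial)$. But exactness at $H^1(Y)$ gives $\Ker(\partial)=\Image\big((\iota_1)^*-(\iota_2)^*\big)$, which by hypothesis is all of $H^1(Y)$; therefore $\partial\equiv 0$, its image is trivial, and so $e=0$. This yields $\spinct=\spinct'$, i.e.\ $\spinct$ is determined by the pair $(\spinct_1,\spinct_2)$, which is exactly what the notation $\spinct=\spinct_1\#\spinct_2$ records. (Existence of a $\SpinC$ structure on $W$ with prescribed, compatible restrictions is the usual gluing construction and needs no hypothesis; the surjectivity assumption is precisely what promotes this to a well-defined gluing.)

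I do not anticipate a genuine obstacle here: the content is elementary exactness. The only points demanding a little care are the torsor bookkeeping that converts ``same restrictions'' into ``difference class restricting to zero,'' and correctly identifying the Mayer--Vietoris maps with those in the lemma's statement (in particular, passing to an open cover via the collar of $Y$, and keeping straight that the $H^2(W)\to H^2(W_1)\oplus H^2(W_2)$ map is the ``sum'' map while the preceding map is the ``difference'' map).
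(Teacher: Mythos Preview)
Your proof is correct and is essentially identical to the paper's own argument: both reduce to the affine $H^2(-;\Z)$-torsor structure on $\SpinC$, then use exactness of Mayer--Vietoris to conclude that the surjectivity hypothesis forces the connecting map $H^1(Y)\to H^2(W)$ to vanish, whence the restriction map $H^2(W)\to H^2(W_1)\oplus H^2(W_2)$ is injective. Your write-up is a bit more explicit about the torsor bookkeeping, but the content is the same.
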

\begin{proof}
$\SpinC(-)$ is an affine $H^2(-;\Z)$-set. Furthermore, restriction of $\SpinC$ structures to codimension zero submanifolds is in affine correspondence with restriction of cohomology classes. Consider the Mayer-Vietoris sequence: 
$$
\begin{CD}
\cdots  @>{(\iota_1)^*-(\iota_2)^*}>>H^1(Y) @>{\delta}>> H^2(W)@>{j^*}>>H^2(W_1)\oplus H^2(W_2)@>>>H^2(Y)\cdots 
\end{CD}
$$
If $(\iota_1)^*-(\iota_2)^*$ is surjective, then $\delta\equiv0$ and $j^*$ is injective. Thus, each element of $H^2(W)$ has a unique decomposition as a class in $H^2(W_1)\oplus H^2(W_2)$.  It follows from the affine identifications that the same holds for $\SpinC$ structures.
\end{proof}

One particular instance of Lemma \ref{lemma:splitting} is at the heart of our applications. Consider  $$W=W_1\cup_Y W_2$$ where $W_2= W_\lambda(K)$ is the 4-manifold obtained by adding a 2-handle to $Y\times [0,1]$ along a rationally null-homologous knot $K$ with framing $\lambda$. Consider the exact sequence in homology associated to the pair $(W_\lambda(K), Y)$, $$H_2(W_\lambda(K), Y)\xrightarrow{\partial} H_1(Y)\to H_1(W_\lambda(K))\to 0.$$
The boundary map sends the generator of $H_2(W_\lambda(K),Y)\cong\Z$ to $[K]$. Since $K$ is rationally null-homologous, the image of $\partial$ is contained in the torsion subgroup of $H_1(Y)$. This implies that the map $\Hom(H_1(W_\lambda(K));\Z)\to \Hom(H_1(Y);\Z)$ is surjective and, therefore, the map $H^1(W_\lambda(K))\to H^1(Y)$ is as well. Thus, Lemma \ref{lemma:splitting} applies to $W$.

\subsection{A  K{\"u}nneth theorem for cobordisms}

For both the proof of Theorem \ref{thm:relativeadjunction} and for the vanishing result for cobordism maps in the next subsection, it will be useful to have a 4-dimensional analogue of \ons's  formula for the Floer homology of a connected sum of 3-manifolds \cite[Theorem 1.5]{HolDiskTwo}.  

A cobordism between pointed $3$-manifolds $(Y_1,w_1)$ and $(Y_2,w_2)$ is a pair  $(W,\Gamma)$ consisting of a cobordism and a smooth properly embedded path $\Gamma$ from $w_1$ to $w_2$. Let $(W,\Gamma)$ be such a cobordism, and let $(W',\Gamma')$ be another cobordism between pointed 3-manifolds $(Y_1',w_1')$ and $(Y_2',w_2')$. Define the \emph{arc sum} of $(W,\Gamma)$ and $(W',\Gamma')$ to be the 4-manifold $$W \otimes W':= W\setminus \nu(\Gamma) \underset{S^2\times I} \cup W'\setminus \nu(\Gamma')$$ obtained by removing tubular neighborhoods of the paths and gluing the remainder using an orientation reversing diffeomorphism of the resulting  $S^2\times I$ in their boundaries. The arc sum is naturally a cobordism between the  pointed 3-manifolds $Y_1\# Y'_1$ and $Y_2\#Y'_2$, endowed with a proper arc along the $S^2\times I$ where the identification is made.

We have the following K{\"u}nneth-type theorem for cobordism maps.

\begin{theorem}[Product formula for arc sums]\label{thm:Kun}
Given  $\SpinC$-cobordisms $(W,\spinct)$ and $(W',\spinct')$ from   $(Y_1,\spinc_1)$ to $(Y_2,\spinc_2)$ and $(Y'_1,\spinc'_1)$ to $(Y'_2,\spinc'_2)$, respectively, equipped with properly embedded arcs $\Gamma\subset W$, $\Gamma'\subset W'$, we have a commutative  diagram:

$$
\begin{CD}
	\HFa(Y_1,\spinc_1)\otimes\HFa(Y'_1,\spinc'_1) 
	@>{   {  F_{W,\spinct}  } \otimes {F_{ W',{\spinct'}  }  }}>>                  
	\HFa(Y_2,\spinc_2)\otimes\HFa(Y'_2,\spinc'_2) \\
	@V{\cong}VV    \   @V{\cong}VV \\
		\HFa(Y_1\# Y'_1,\spinc_1\#\spinc'_1) @>{  \ \ \ F_{W\otimes W',{\spinct\#\spinct'}}\ \   }>> \HFa(Y_2\#Y'_2,\spinc_2\#\spinc'_2)\\
\end{CD}
$$
where $W\otimes W'$ is the arc sum of $W$ and $W'$ along $\Gamma$ and $\Gamma'$.
 \end{theorem}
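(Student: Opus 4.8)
The plan is to reduce the statement to the known connected-sum formula for closed three-manifolds, \cite[Theorem 1.5]{HolDiskTwo}, by decomposing each cobordism map into the composition of maps associated to a sequence of handle attachments and applying the closed-manifold K\"unneth formula level by level. First I would recall that, by \cite{HolDiskFour}, the cobordism map $F_{W,\spinct}$ is defined by choosing a handle decomposition of $W$ relative to $Y_1$, writing $W$ as a composition of one-handle, two-handle, and three-handle cobordisms, and composing the associated maps; moreover the map is independent of this decomposition. The arc sum operation is well adapted to this: away from the neighborhoods $\nu(\Gamma)$ and $\nu(\Gamma')$ of the connecting arcs, $W\otimes W'$ literally contains $W$ and $W'$, so a handle decomposition of $W$ (supported away from $\Gamma$) together with one of $W'$ (supported away from $\Gamma'$) furnishes a handle decomposition of $W\otimes W'$, where at each intermediate level the three-manifold is the connected sum of the corresponding intermediate levels of $W$ and $W'$ (the connect-sum region being precisely the $S^2\times I$ along which the identification is made, which persists through the cobordism).

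Next I would verify the commutativity level by level. For a single handle attachment, say a two-handle along a framed knot $L\subset Y_1$ sitting in $W\setminus\nu(\Gamma)$, the corresponding handle in $W\otimes W'$ is attached along the same knot $L$, now viewed inside $Y_1\# Y_1'$ (disjoint from the connect-sum sphere). The map $F_{W\otimes W'}$ for this elementary piece is computed from a Heegaard triple obtained by taking a Heegaard triple for the handle attachment in $Y_1$ and forming its connected sum with a Heegaard diagram (with a doubled basepoint region) for $Y_1'$; the holomorphic triangle counts in the connect sum factor as a product of triangle counts in each piece, by the same degeneration/gluing argument used by \ons\ to prove the closed K\"unneth formula, together with the gluing result for the ``stabilization'' by a genus-zero handle near the basepoint. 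This gives exactly the identity $F_{\text{elementary piece in }W\otimes W'} = F_{\text{elementary piece in }W}\otimes \Id$, and symmetrically for handles coming from $W'$. The one-handle and three-handle maps, which are defined by tensoring with the canonical generators of $\HFa(S^1\times S^2)$, are visibly compatible with the connect-sum isomorphism.

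Composing these elementary identities over all the handles of $W$ followed by all the handles of $W'$ — and using the connect-sum K\"unneth isomorphisms of \cite[Theorem 1.5]{HolDiskTwo} at each intermediate three-manifold level as the vertical maps — yields the commutative diagram in the statement, with the composite horizontal map on top equal to $F_{W,\spinct}\otimes F_{W',\spinct'}$ and the composite on the bottom equal to $F_{W\otimes W',\spinct\#\spinct'}$. The compatibility of $\SpinC$ structures, namely that the restriction of $\spinct\#\spinct'$ to each intermediate level is the corresponding connected sum, follows from Lemma \ref{lemma:splitting} (the relevant Mayer--Vietoris map is surjective because the gluing region is a three-sphere, over which $H^1$ vanishes), and this is what makes the notation $\spinct\#\spinct'$ unambiguous and the $\SpinC$-refined K\"unneth formula applicable.

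The main obstacle I anticipate is the bookkeeping required to make the ``handle decomposition disjoint from $\Gamma$'' step rigorous and to check that the resulting Heegaard multi-diagrams for $W\otimes W'$ really are connected sums of those for $W$ and $W'$ in a way compatible with the stabilization invariance of the cobordism maps; in particular one must ensure the connecting arc can be isotoped into a standard position meeting the Heegaard surface in a controlled way, so that removing $\nu(\Gamma)$ and gluing corresponds on the diagram level to the connect-sum operation used in \cite{HolDiskTwo}. This is a naturality/basepoint-tracking issue rather than a new analytic input: the holomorphic curve count that underlies the factorization is exactly \ons's, so once the combinatorial/topological setup is arranged, the analytic heart of the proof is quoted rather than reproved.
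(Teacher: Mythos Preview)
Your proposal is correct and follows essentially the same route as the paper: reduce to the case of a single handle attachment on one side against a product on the other, treat the $1$- and $3$-handle cases by inspection, and for $2$-handles form the connected sum of Heegaard triple diagrams and use that holomorphic triangle counts factor over the connect-sum neck (the paper phrases this as the ``localization principle'' for triangles with disjoint domains). One small sharpening: on the $Y_1'$ side you need an honest Heegaard \emph{triple}, obtained by taking $\gammas'$ to be Hamiltonian translates of $\betas'$, so that the triangle map on that factor is the closest-point map and hence induces the identity on homology---this is what makes the factored map equal to $F_{W,\spinct}\otimes \Id$ rather than $F_{W,\spinct}$ tensored with something else.
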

\begin{remark}According to \cite{JTZ},  Heegaard Floer homology groups depend on the choice of basepoint. Similarly,  the cobordism-induced maps depend on the path connecting them  \cite{ZemkeGraphCob}.  Despite this, we  suppress this data from the notation.
\end{remark}
\begin{proof} Pick a handle decomposition $\mathcal{H}$ of $W$ relative to $Y_1$, and adapted to $\Gamma$ in the following sense: there is a point $w\in Y_1$ in the complement of the attaching regions for all the handles of  $\mathcal{H}$, such that $\Gamma$ is the properly embedded arc from $Y_1$ to $Y_2$ obtained as the trace of $w$. Such a decomposition can be obtained from a generic Morse function with gradient vector field for which $\Gamma$ is a flowline.  Similarly, let $\mathcal{H}'$ be a handle decomposition of $W'$ adapted to $\Gamma'$.  Since $\Gamma$ and $\Gamma'$ are in the complement of the attaching regions for all of the handles of $W$ and $W'$, there is a handle decomposition of $W\otimes W'$ given by adding handles, in turn, to either $W\setminus \nu(\Gamma)$ or $W'\setminus \nu(\Gamma')$. Thus, it suffices to prove the statement in the special cases where $W$ is a 1, 2, or 3-handle addition and $W'$ is the product cobordism $Y_1'\times I$, endowed with the canonical extension of $\spinc_1'$ (whose associated map is the identity). 

First, suppose $W$ is a 1-handle addition and, therefore, a cobordism from $Y_1$ to $Y_1\#(S^1\times S^2)$. Ozsv\'ath and Szab\'o define the map induced by $W$ as follows: 
there is a unique $\SpinC$ structure $\spinct$ on $W$ extending $\spinc_1\in \SpinC(Y_1)$ which restricts to $Y_1\#(S^1\times S^2)$ as $\spinc_1\#\spinc_0$ where $\spinc_0$ is the unique $\SpinC$-structure on $S^1\times S^2$ with $c_1(\spinc_0)=0$. Given a Heegaard diagram $(\Sigma,\alphas,\betas,w)$ for $Y_1$ and  the standard (weakly admissible) genus one Heegaard diagram for $S^1\times S^2$ with two generators, $(E,\alpha,\beta,w_0)$, there is a Heegaard diagram $(\Sigma\# E,\alphas\cup \alpha,\betas\cup\beta, w)$ for $Y_1\# (S^1\times S^2)$. The map $F_{W,\spinct}$ induced by $W$ is defined by a chain map  which, for $\x\in \CFa(Y_1,\spinc_1)$, is given by  $f_{W, \spinct}(\x)=\x\otimes\theta_+$.  Here, $\theta_{+}$ is the element of higher  relative Maslov grading in $(E,\alpha,\beta,w_0)$. At the same time, the chain map $f_{W\otimes W',\spinct\#\spinct'}$ is defined  by sending $\x\otimes \y$ in $\CFa(Y_1\#Y_1',\spinc_1\#\spinc_1') $ to $\x\otimes\theta_+\otimes \y$ in $\CFa(Y_1\#(S^1\times S^2)\#Y_1', \spinc_1\#\spinc_0\#\spinc_1')$. Here, we are using quasi-isomorphisms provided by the K{\"u}nneth theorem \cite[Theorem 1.5]{HolDiskTwo}.  Considering the induced maps on homology, we have: $F_{W, \spinct}\otimes \Id= F_{W\otimes W',\spinct\#\spinct'}$.  The case of a 3-handle addition is formally the same, since the maps in that case are dual to the 1-handle maps. See Section 4.3 of \cite{HolDiskFour} for more details. 

Next, we consider the case where $W$ is a 2-handle addition. In this case, $Y_2$ is given by integral surgery along a framed knot $K\subset Y_1$, and the map induced on Floer homology by $W$ is defined by counting pseudo-holomorphic triangles associated to an adapted Heegaard triple diagram.  To describe this, let $(\Sigma,\alphas,\betas,w)$ be a Heegaard diagram for $Y_1$ where the final $\beta$-curve is the meridian of the framed knot $K$. Then we have a related Heegaard diagram $(\Sigma,\alphas,\gammas,w)$ for $Y_2$ where the first $(g-1)$ $\gamma$-curves are small Hamiltonian translates of the first $(g-1)$ $\beta$-curves and $\gamma_g$ is the longitude for $K$ corresponding to the 2-handle addition. Together, this data yields a Heegaard triple diagram $(\Sigma,\alphas,\betas,\gammas,w)$ specifying a 4-manifold $X_{\alpha\beta\gamma}$ with $\partial X_{\alpha\beta\gamma}=-Y_{\alpha\beta}-Y_{\beta\gamma}+Y_{\alpha\gamma},$ where $Y_{\alpha\beta}=Y_1$, $Y_{\beta\gamma}=\#^{g-1}S^1\times S^2$ and $Y_{\alpha\gamma}=Y_2$. For further details about the construction of $X_{\alpha\beta\gamma}$ see  \cite[Section 4.1]{HolDiskFour}. Observe that $W$ can be recovered from $X_{\alpha\beta\gamma}$ by capping off  the $\#^{g-1}S^1\times S^2$ boundary component with $\large\natural^{g-1}S^1\times B^3$. \ons\  associate  a chain map  to $W$ by $$f_{W,\spinct}(\x):=f_{{\alpha\beta\gamma}}(\x\otimes\Theta_{top}),$$
where the latter is a sum, over all $\y$ generating  $\CFa(Y_2)$, of the number of pseudo-holomorphic triangles in Sym$^g(\Sigma)\setminus V_w$ whose homotopy class represents $\spinct$ and  whose vertices map to $\x,\Theta_{top}$ and $\y$ (here, and throughout the proof, we conflate the homology class $\Theta_{top}$ with its unique chain representative on the given Heegaard diagram).  The map induced on homology is denoted $F_{W,\spinct}$.

The map induced by $W\otimes W'$ on Floer homology admits a similar description. Given  a Heegaard diagram $(\Sigma',\alphas',\betas',w')$ for $Y_1'$, we construct  a Heegaard triple diagram $(\Sigma',\alphas',\betas',\gammas',w')$, where the curves $\gammas'$ are small Hamiltonian translates of the curves $\betas'$. We then form the connected sum of this latter Heegaard triple diagram with the one associated to the 2-handle cobordism above: \[(\Sigma,\alphas,\betas,\gammas,w)\underset{{w=w'}}\#(\Sigma',\alphas',\betas',\gammas',w').\] 
\noindent That is, we form the connected sum of $\Sigma$ with $\Sigma'$ along  neighborhoods of the basepoints $w$ and $w'$, and let the curves from the constituent diagrams descend to  $\Sigma\#\Sigma'$.  The basepoints naturally descend to a basepoint $w$, living in the region of the triple diagram corresponding to  the regions containing the basepoints.
This  triple diagram  describes a 4-manifold  
 with boundary components $$Y_{\alpha\cup\alpha',\beta\cup\beta'}=Y_1\#Y_1',$$ $$Y_{\beta\cup\beta',\gamma\cup\gamma'}=\#^{g-1+g'}(S^1\times S^2),$$ and $$Y_{\alpha\cup\alpha',\gamma\cup\gamma'}=Y_2\#Y_1',$$ where $g$ is the genus of $\Sigma$ and $g'$ is the genus of $\Sigma'$. A chain map induced by $W\otimes W'$ is defined by $$f_{W\otimes W',\spinct\#\spinct'}(\x\otimes \y):=f_{{\alpha\cup\alpha',\beta\cup\beta',\gamma\cup\gamma'}}((\x\otimes \y)\otimes \Theta_{top}).$$

\noindent Here, $\Theta_{top}$ is the top graded generator of $\CFa(\#^{g-1+g'}(S^1\times S^2))$ coming from the Heegaard diagram $(\Sigma\#\Sigma',\betas\cup\betas', \gammas\cup\gammas',w)$. This generator decomposes as $\Theta_{top}=\Theta^{g-1}_{top}\otimes \Theta^{g'}_{top}$ where $\Theta^{g-1}_{top}$ is the top graded generator for $(\Sigma, \betas,\gammas,w)$  and $\Theta^{g'}_{top}$ is the top graded generator in $(\Sigma, \betas',\gammas',w)$. 

 Like the chain complexes associated to the connected sum of Heegaard diagrams, the chain map $f_{{\alpha\cup\alpha',\beta\cup\beta',\gamma\cup\gamma'}}$ splits  as a tensor product:
$$f_{{\alpha\cup\alpha',\beta\cup\beta',\gamma\cup\gamma'}}((\x\otimes \y)\otimes \Theta_{top})= f_{{\alpha,\beta,\gamma}}(\x\otimes\Theta^{g-1}_{top})\otimes f_{{\alpha',\beta',\gamma'}}(\y\otimes \Theta^{g'}_{top})$$ where $f_{\alpha',\beta',\gamma'}$ is the chain map associated to the Heegaard triple $(\Sigma',\alphas',\betas',\gammas', z')$. This splitting, as with the K{\"u}nneth theorem for the hat Floer homology of a connected sum, can be easily proved by appealing to the ``localization principle" for holomorphic triangles whose domains split as a disjoint union (see \cite[Section 9.4]{RasThesis}).   Since the connected sum of diagrams is performed near the basepoint $w$, and the hat theory prohibits the domains of disks and triangles from entering this region, all moduli spaces  split as a cartesian product of  moduli spaces  associated to the two Heegaard triple diagrams.    Finally, since the $\gammas'$ curves are translates of the $\betas'$ curves, \[f_{{\alpha',\beta',\gamma'}}(\y\otimes \Theta^{g'}_{top})=\tilde{\y}+\mathrm{lower\ order\ terms\ with\ respect\ to\ symplectic\ area},\]  where $\tilde{\y}$ is the generator associated to the ``closest" point map. It follows that the map on homology can be taken to be the identity; see \cite[Section 9]{HolDisk}  for more details on the symplectic area filtration, specifically the discussion starting on pg. 1122 of op. cit. 
\end{proof}
\begin{remark} The product formula extends to the other versions of Floer homology, either by using a more sophisticated degeneration and gluing argument for holomorphic triangles, or by invoking an argument similar to the one in \cite[Section 4]{AbsGrad}.  Another proof can  be obtained using Zemke's graph cobordism TQFT \cite{ZemkeGraphCob}.  In that context, one considers the 3-handle cobordism from $Y_1\#Y_1'$ to $Y_1\sqcup Y_1'$, composed with $W\sqcup W'$, composed with the 1-handle cobordism from $Y_2\#Y_2'$ to $Y_2\sqcup Y_2'$.  The resulting graph cobordism (where the graphs in the 1- and 3-handles are the obvious trivalent graphs with 3-edges) satisfies the product formula given.  One can surger this cobordism along a neighborhood of the cycle arising from $\Gamma\sqcup \Gamma'$ joined to the vertices in the 1- and 3-handles.  This results in the arc sum, equipped with the given path, and the resulting maps are easily argued to agree.  We opted for the  proof given, as it is  elementary and self-contained.
\end{remark}

\subsection{Vanishing of maps on Floer homology}
 In this subsection we prove the  vanishing result (Theorem \ref{thm:cobordismbound}) which is central to our proof of the relative adjunction inequality.  

Let $\Sigma$ be a closed surface embedded in a cobordism $W$ whose incoming end is a $3$-manifold $Y$, and  let  $\gamma$  be a properly embedded arc connecting $Y$ to $\Sigma$. Let $N=N(Y\cup \gamma\cup\Sigma)$ be a regular neighborhood of $Y\cup\gamma\cup \Sigma$. Then $$\partial N=-Y\sqcup (Y\#\Cbundle),$$ where $\Cbundle$ is the circle bundle over $\Sigma$ with Euler number $[\Sigma]^2$. 
\begin{figure}
\begin{subfigure}{.5\textwidth}
\centering
\includegraphics[height= 1.5in]{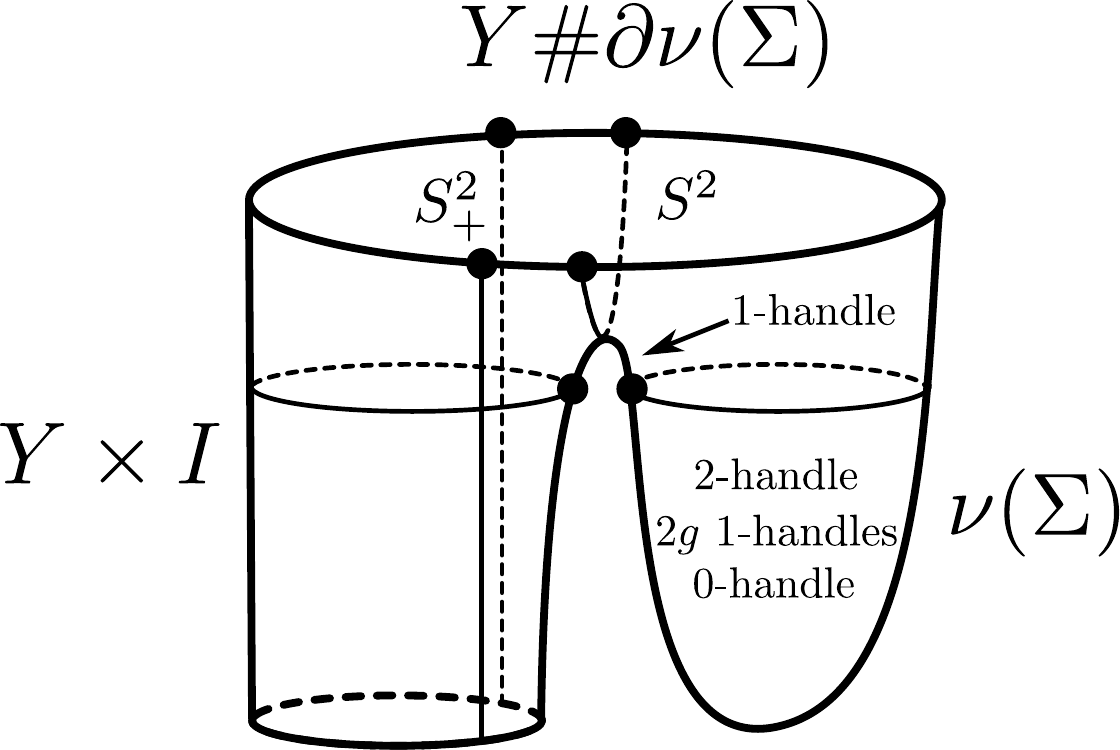}
\caption{\centering A schematic of $(Y\times I)\bignatural \nu(\Sigma)$.}
\label{Morse_1}
\end{subfigure}

\begin{subfigure}{.45\textwidth}
\centering
\includegraphics[height=1.4 in]{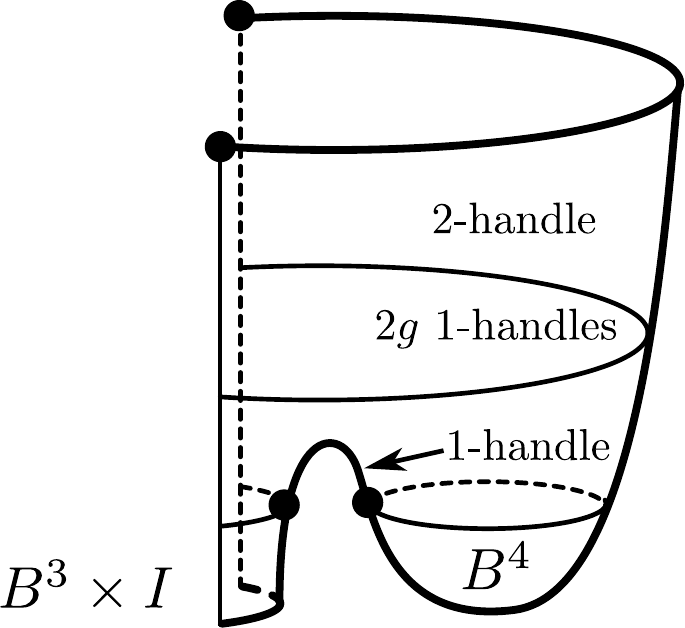}
\caption{\centering A schematic of $(B^3\times I)\bignatural \nu(\Sigma)$.}
\label{Morse_2}
\end{subfigure}
\begin{subfigure}{.45\textwidth}
\centering
\includegraphics[height=1.4 in]{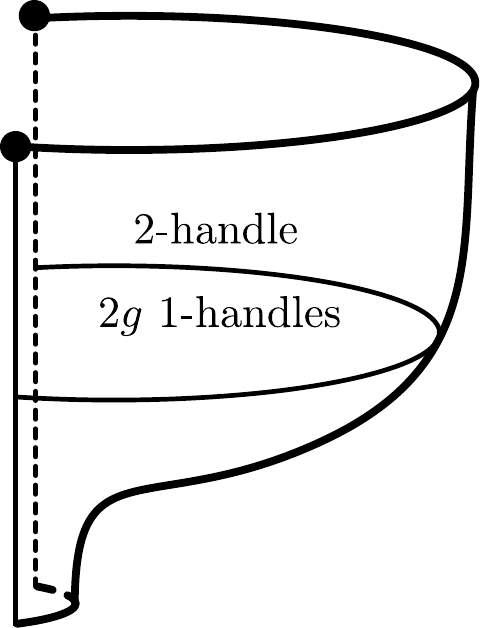}
\caption{\centering After handle cancellation, $(\nu(\Sigma)-B^4)\setminus(B^3\times I)$.}
\label{Morse_3}
\end{subfigure}
\caption{}
\end{figure}   

\begin{lemma}\label{lemma:arcsumdecomposition} The 4-manifold $N$ described above is diffeomorphic to the boundary connected sum $$(Y\times[0,1] )\bignatural \Dbundle$$ where $\Dbundle$ is the disk bundle over $\hatSigma$ with Euler number $[\hatSigma]^2$. Alternatively, $N$ can be smoothly decomposed as an arc sum $$ (Y\times I)\setminus (B^3\times I) \bigcup_{S^2\times I} (\Dbundle-B^4)\setminus (B^3\times I).$$
\end{lemma}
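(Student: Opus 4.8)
The plan is to read off both descriptions of $N$ directly from the definition of a regular neighborhood, using only elementary $4$-dimensional handle calculus; the figures indicate how the pieces are assembled.

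\textbf{Step 1: the boundary connected sum.} Since $\gamma$ is an embedded arc meeting $Y\cup\Sigma$ only in its two endpoints, a regular neighborhood of $Y\cup\gamma\cup\Sigma$ may be built from a collar $Y\times[0,1]$ of $Y$ (with $Y=Y\times\{0\}$) and a tubular neighborhood $\Dbundle$ of $\Sigma$ — the disk bundle over $\Sigma$ whose Euler number is the normal Euler number $[\Sigma]^2$, with boundary the circle bundle $\Cbundle$ — by attaching a tube $B^3\times[0,1]$, a neighborhood of the portion of $\gamma$ lying outside these two pieces, along a $3$-ball in $Y\times\{1\}\subset\partial(Y\times[0,1])$ and a $3$-ball in $\Cbundle$ (cf.\ Figure \ref{Morse_1}). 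Joining two $4$-manifolds to the two ends of such a tube is by definition a boundary connected sum, so $N\cong(Y\times[0,1])\bignatural\Dbundle$; carrying out the sum along a $3$-ball in $Y\times\{1\}$ reproduces $\partial N=-Y\sqcup(Y\#\Cbundle)$.

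\textbf{Step 2: the arc sum.} To obtain the arc sum description I would localize the boundary connected sum. Choose a product tube $B^3\times[0,1]\subset Y\times[0,1]$ whose top face $B^3\times\{1\}$ contains the $3$-ball along which $\Dbundle$ was attached; then $Y\times[0,1]=\big((Y\times[0,1])\setminus(B^3\times[0,1])\big)\cup_{S^2\times[0,1]}(B^3\times[0,1])$, whence
\[
N\ \cong\ \big((Y\times[0,1])\setminus(B^3\times[0,1])\big)\ \cup_{S^2\times[0,1]}\ \big((B^3\times[0,1])\bignatural\Dbundle\big),
\]
and the first factor is precisely $Y\times[0,1]$ with a neighborhood of the product arc $\{\mathrm{pt}\}\times[0,1]$ removed. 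It remains to identify the second factor with $(\Dbundle-B^4)\setminus(B^3\times[0,1])$. Since $B^3\times[0,1]\cong B^4$, the manifold $(B^3\times[0,1])\bignatural\Dbundle$ is $\Dbundle$ with a cancelling pair of handles, of indices $0$ and $1$, attached along its boundary. Sliding the new $0$-handle through the new $1$-handle — the cancellation drawn in Figures \ref{Morse_2}--\ref{Morse_3} — realizes the same manifold as $\Dbundle$ with an open $B^4$ deleted from its interior together with a tube $B^3\times[0,1]$ joining the resulting $S^3$ boundary to $\Cbundle$, i.e.\ as $(\Dbundle-B^4)\setminus(B^3\times[0,1])$; moreover one checks that the collar $S^2\times[0,1]$ along which this factor is glued is carried to the $S^2\times[0,1]$ appearing in the boundary of the deleted tube. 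Substituting this into the display gives exactly the arc sum in the statement.

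\textbf{The main obstacle.} Nothing here is deep, but the step that needs genuine care is the last one: tracking the $S^2\times[0,1]$ collars through the handle cancellation, so that the output is literally the arc sum of the two stated cobordisms glued along $S^2\times[0,1]$, and not merely a $4$-manifold abstractly diffeomorphic to it — for it is this decomposition, rather than the diffeomorphism type of $N$ alone, that gets fed into the product formula of Theorem \ref{thm:Kun} in the next subsection. I would therefore perform the cancellation with the collars kept explicit throughout, as the figures suggest.
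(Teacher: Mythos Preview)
Your proposal is correct and follows essentially the same route as the paper: identify the neighborhood of $\gamma$ as a connecting $1$-handle to get the boundary connected sum, split off a product $B^3\times I$ from the $Y\times I$ factor, and then cancel the resulting $0$-/$1$-handle pair in $(B^3\times I)\bignatural\Dbundle$ to recognize it as $(\Dbundle-B^4)\setminus(B^3\times I)$. Your remark about tracking the $S^2\times I$ collars through the cancellation is well taken and is exactly the point the paper's Figures~\ref{Morse_2}--\ref{Morse_3} are meant to address.
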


\begin{proof} Given handle descriptions for the disjoint manifolds $Y\times I$ and $\Dbundle$ arising as neighborhoods of $Y$ and $\Sigma$, respectively, a handle description for $(Y\times I )\bignatural \Dbundle$ is given by attaching a 4-dimensional 1-handle  to connect them.  This 1-handle can be identified with the part of the neighborhood of $\gamma$ outside the neighborhoods of $Y$ and $\Sigma$, verifying the first claim.  Note that the resulting  handle description corresponds to a Morse function on $(Y\times I )\bignatural \Dbundle$ where the index 1 critical point corresponding to the connecting 1-handle has largest critical value. See Figure \ref{Morse_1}.

As illustrated by Figure \ref{Morse_1}, the belt sphere $S^2$ of the 1-handle separates the ``upper" boundary $Y\#\partial \nu(\Sigma)$ into its (punctured) summands, $Y-B^3$ and $\nu(\Sigma)-B^3$.  The image of a boundary parallel sphere $S^2_+$ in $Y-B^3$ under the  
 downward  gradient flow of the Morse function is a properly embedded $S^2_+\times I$. Removing this separates $N$ into two pieces $(Y\times I )- (B^3\times I)$ and $(B^3\times I) \bignatural \Dbundle$. It remains to show that $(B^3\times I)\bignatural \Dbundle\cong (\Dbundle-B^4)\setminus(B^3\times I)$. 

To this end, change the Morse function on $(B^3\times I)\bignatural \Dbundle$ so that handles are added in order of index. Specifically, begin with $B^3\times I$ and $B^4$ and add a 1-handle to connect them. Then to the boundary of $B^4$ attach the remaining 1- and 2-handles of $\Dbundle$. 

Now $B^4$ cancels  the connecting 1-handle, so this manifold is diffeomorphic to one with a handle decomposition built from $B^3\times I$ by attaching 1- and 2-handles along $B^3\times \{1\}$. But, $B^3\times I$ union the 1- and 2-handles is easily identified with  $(\Dbundle-B^4)\setminus(B^3\times I)$. See Figures \ref{Morse_2} and \ref{Morse_3} for a schematic.
\end{proof}

We are now ready to give a proof of Lemma 3.5 of \cite{FourBall}. Our statement and proof differ from (and correct) the one given there.  See the remark below the proof. \begin{lemma}\label{lemma:circlebundle}
Let $\Dbundle$ be the disk bundle over a closed oriented connected surface $\Sigma$ of genus $g=g(\Sigma)$. The map $$F_{\Dbundle-B^4,\spinct}:\HFhat(S^3)\to\HFhat(\Cbundle,\spinct|_{\Cbundle})$$ is trivial for all $\spinct\in\SpinC(\Dbundle-B^4)$ such that $$\langle c_1(\spinct),[\hatSigma]\rangle+[\Sigma]^2>2g(\Sigma).$$ 
\end{lemma}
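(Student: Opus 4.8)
The plan is to reduce this statement about the disk bundle $\Dbundle$ over $\Sigma$ to the adjunction inequality for closed 4-manifolds, using the composition/cobordism structure. First, observe that $\Dbundle - B^4$ is a cobordism from $S^3$ to the circle bundle $\Cbundle$ of Euler number $[\Sigma]^2$, and it contains the closed surface $\hatSigma$ (the zero section), which is homologically essential when $[\Sigma]^2 \neq 0$, and carries the homology of $\Dbundle$. The idea is to cap off the $\Cbundle$ end. When $[\Sigma]^2 > 0$ we can attach the disk bundle of the \emph{opposite} Euler number, or more to the point glue in a suitable 4-manifold $Z$ with $\partial Z = -\Cbundle$ and $b_2^+(Z)$ large, so that the closed manifold $X = (\Dbundle - B^4) \cup_{\Cbundle} Z$ (after filling the $S^3$ with $B^4$) satisfies $b_2^+(X) > 1$ and contains a closed surface homologous to $\hatSigma$ with positive square, on which the hypothesis $\langle c_1(\spinct), [\hatSigma]\rangle + [\Sigma]^2 > 2g(\Sigma)$ \emph{violates} the closed adjunction inequality.

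Then I would argue as follows. The cobordism map $F_{X \setminus (B^4 \sqcup B^4), \spinct'}$ from $\HFhat(S^3)$ to $\HFhat(S^3)$ factors through $\HFhat(\Cbundle, \spinct|_{\Cbundle})$ as the composition $F_{Z, \cdot} \circ F_{\Dbundle - B^4, \spinct}$. On the other hand, this composed cobordism map computes (up to identifying $\HFhat(S^3) \cong \F$ in the relevant gradings) the closed 4-manifold invariant of $X$ in the $\SpinC$ structure $\spinct'$ obtained by gluing. Since the closed adjunction inequality forces this invariant to vanish — indeed more simply, for $b_2^+ > 1$ the closed 4-manifold \emph{mixed} invariant, or here really the statement that the cobordism map $\HFhat(S^3) \to \HFhat(S^3)$ through a manifold with $b_2^+ \ge 1$ on each side of the splitting 3-manifold is zero for grading/dimension reasons — we get that the composition is trivial. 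Actually the cleanest route: a closed 4-manifold split by $S^3$ into two pieces each with $b_2^+ > 0$ has vanishing Seiberg-Witten/\os\ invariants, hence the composite cobordism map is $0$; but this does not immediately give that the first factor is $0$.

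So the real mechanism must be more careful, and this is where I expect the main obstacle to lie: deducing vanishing of $F_{\Dbundle - B^4, \spinct}$ itself, not merely of some composition. The trick, following \os, is to choose the capping manifold $Z$ so that the map $F_{Z, \cdot}: \HFhat(\Cbundle, \spinct|_{\Cbundle}) \to \HFhat(S^3)$ is \emph{injective} on the relevant summand — for instance by taking $Z$ to be (a blow-up of) the disk bundle of Euler number $-[\Sigma]^2$ over $\Sigma$ capped appropriately, chosen so that $Z$ with its surface realizes equality or near-equality in adjunction and the induced map is an isomorphism onto the image, or by using a neighborhood-of-surface argument where the circle bundle bounds on the other side in a controlled way. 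Alternatively — and this is probably the route actually taken — one avoids $Z$ entirely: cap $\Cbundle$ with the disk bundle $\Dbundle'$ of Euler number $[\Sigma]^2$ \emph{back}, getting the closed manifold $\widehat{\Dbundle} = \Dbundle \cup_{\Cbundle} \Dbundle'$ which is an $S^2$-bundle-like object, or a ruled surface / blow-up thereof; then $F_{\Dbundle - B^4, \spinct}$ followed by the map for the other half computes a cobordism map $\HFhat(S^3) \to \HFhat(\Cbundle) \to \HFhat(\text{something})$ and one uses Theorem~\ref{thm:cobordismbound}'s precursor or the connected-sum/K\"unneth formula (Theorem~\ref{thm:Kun}) together with Lemma~\ref{lemma:arcsumdecomposition} to identify $\Dbundle - B^4$ arc-sum-style and bound things by the closed case.

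Concretely, the steps I would carry out: (1) recall that $\Dbundle - B^4$ with the zero-section surface can be built by an arc sum / boundary connected sum with $Y \times I$ type pieces (Lemma~\ref{lemma:arcsumdecomposition}), so $F_{\Dbundle-B^4,\spinct}$ is governed by a handle picture; (2) choose an auxiliary filling $Z$ of $\Cbundle$ — e.g. the disk bundle of Euler number $-[\Sigma]^2$ with the surface capped by a parallel copy, or a standard negative-definite filling — with $b_2^+(Z) \ge 1$ and a $\SpinC$ structure restricting correctly, so that the glued closed manifold $X$ contains a smooth surface $\widehat\Sigma$ with $[\widehat\Sigma]^2 = [\Sigma]^2 + (\text{contribution from } Z) \ge 0$ and pairing still violating adjunction; (3) arrange $b_2^+(X) > 1$ (stabilize $Z$ by connect-summing with copies of $S^2 \times S^2$ or taking a bigger filling if needed — this only helps, since it keeps the closed adjunction inequality applicable and does not change the relevant restriction); (4) apply the closed Adjunction Inequality to conclude the \os\ invariant of $(X, \spinct')$ vanishes, hence the full composite cobordism map $\HFhat(S^3) \to \HFhat(S^3)$ is zero; (5) finally, and this is the crux, choose $Z$ so that $F_{Z}$ detects the image — i.e. is injective on the image of $F_{\Dbundle - B^4, \spinct}$ — which forces $F_{\Dbundle-B^4,\spinct} = 0$. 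For step (5) the cleanest option is to take $Z = \Dbundle' - B^4$ to be \emph{another} copy of the disk bundle but oriented so the closed manifold is the sphere bundle double, where the relevant map is computed directly; I expect the authors instead use a more hands-on model. The main obstacle, as flagged, is precisely step (5): upgrading vanishing-of-composition to vanishing-of-the-first-factor, which requires a judicious choice of capping 4-manifold whose cobordism map is injective on the relevant subspace, and verifying the $\SpinC$ structure bookkeeping (using Lemma~\ref{lemma:splitting} to ensure $\spinct' = \spinct \# \spinct_Z$ is well-defined) goes through so that the hypothesis $\langle c_1(\spinct), [\hatSigma]\rangle + [\Sigma]^2 > 2g$ on $\Dbundle$ really does translate into a violation of closed adjunction for $(X, \spinct')$.
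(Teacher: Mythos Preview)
Your strategy is fundamentally different from the paper's, and the gap you flag at step~(5) is real and not repairable along the lines you suggest. Capping $\Cbundle$ with some $Z$ and invoking closed-manifold vanishing only kills the \emph{composite}; to conclude $F_{\Dbundle-B^4,\spinct}=0$ you need $F_Z$ injective on the image, and you offer no $Z$ with this property. Taking $Z$ to be another disk bundle over $\Sigma$ gives no control on injectivity without already understanding $\HFhat(\Cbundle)$ and the maps into and out of it in detail --- which is exactly what a direct computation provides. There is also a warning sign: a closed-adjunction argument, were it to go through, would naturally yield vanishing for $\langle c_1(\spinct),[\hatSigma]\rangle + [\Sigma]^2 \geq 2g$ rather than $>2g$, and Remark~\ref{circle-error} shows the map is in fact \emph{nonzero} at $2g$ (when the Euler number is sufficiently negative), so any route delivering that stronger conclusion must contain an error.

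The paper instead computes directly. The disk bundle has a handle decomposition with one $0$-handle, $2g$ $1$-handles, and one $2$-handle attached along the Borromean knot $B_g \subset \#^{2g}S^1\times S^2$, so $\Dbundle - B^4 = W_1 \cup_{\#^{2g}S^1\times S^2} W_2$ with $W_1$ the $1$-handle cobordism and $W_2$ the $2$-handle addition along $B_g$. Ozsv\'ath and Szab\'o's explicit computation of $CFK^\infty(B_g)$, together with the Large Surgery Theorem, identifies $G_{W_2,\spinct_2}$ (when the Euler number is $\leq -2g+1$) with the map $CFK^\infty(B_g)\{i=0\} \to CFK^\infty(B_g)\{\min(i,j-k)=0\}$, which is zero for $k>g$ simply because every generator at $i=0$ has Alexander grading at most $g$. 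The case of arbitrary Euler number is then deduced via the blow-up formula, by tubing $\Sigma$ to enough exceptional spheres to make its square sufficiently negative. No capping or closed adjunction enters.
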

\begin{proof} 
The disk bundle $\Dbundle$ has a handle decomposition with a single 0-handle, $2g$ 1-handles and a single 2-handle. This  decomposition is described explicitly via a handlebody diagram obtained from a diagram for $\bignatural^{2g} S^1\times B^3$ by attaching a 2-handle along the Borromean knot $B_g$ (see \cite[Figure 12.5]{GS} or \cite[Figure 16]{Knots} for pictures of $B_3$ and $B_1$, respectively). 

Thus, $\Dbundle-B^4=W_1\cup_{\#^{2g}\SoneStwo}W_2$, where $W_1=(\bignatural^{2g}S^1\times B^3)-B^4$ and $W_2$ is the cobordism associated to the 2-handle addition along $B_g$. By Lemma \ref{lemma:splitting}, $\spinct=\spinct_1\#\spinct_2$ and the map $F_{\Dbundle-B^4,\spinct}$ factors as $$\HFhat(S^3)\xrightarrow{G_{W_1,\spinct_1}} \HFhat(\#^{2g}S^1\times S^2)\xrightarrow{G_{W_2,\spinct_2}} \HFhat(\Cbundle,\spinct|_{\Cbundle}).$$

Consider the map $G_{W_2,\spinct_2}$. In Section 9 of \cite{Knots}, Ozsv\'ath and Szab\'o calculate $CFK^\infty(B_g)$. There they show that in Alexander grading $k$, $$\widehat{HFK}(\#^{2g}S^1\times S^2,B_g,k)=\Lambda^{g+k}H^1(\hatSigma),$$ supported in Maslov grading $k$. Moreover, they show $$CFK^\infty(B_g)=\widehat{HFK}(\#^{2g}S^1\times S^2,B_g)\otimes_{\F_2} \F_2[U,U^{-1}].$$ Theorem 4.1 of \cite{Knots}, also known as the Large Surgery Theorem, implies that if the framing of the 2-handle (which equals the Euler number of the disk bundle) is negative and less than or equal to $-2g+1$ then the map $$G_{W_2,\spinct_2}:\HFhat(\#^{2g}S^1\times S^2)\to \HFhat(\Cbundle), \spinct|_{\Cbundle})$$ can be calculated from the map $$CFK^\infty(B_g)\{i=0\}\to CFK^\infty(B_g)\{\min(i,j-k)=0\},$$ which is a composition of a quotient followed by an inclusion. Here we have enumerated $\SpinC$-structures on $\#^{2g}S^1\times S^2$ that extend over the 2-handle addition so that $\langle c_1(\spinct_2),[\hatSigma]\rangle +[\hatSigma]^2=2k$. If $k>g$, then $G_{W_2,\spinct_2}$ is trivial, since the generators of $CFK^\infty(B_g)\{i=0\}$ have Alexander grading less than or equal to $g$. Finally, observing that $$\langle c_1(\spinct),[\hatSigma]\rangle +[\hatSigma]^2=\langle c_1(\spinct_2),[\hatSigma]\rangle +[\hatSigma]^2, $$ the result follows in the special case that the Euler number of the disk bundle is less than or equal to $-2g+1$.  Note that \cite[Theorem 4.1]{Knots} only states that the surgery formula holds provided that the framing is sufficiently negative.  That $-2g+1$ is negative enough follows from the argument discussed in \cite[Remark 4.3]{Knots}, applied in the context of the integer surgeries exact sequence with negative framings, \cite[Remark 9.20]{HolDiskTwo}. 

 The result for general Euler number follows from this special case using the blow-up formula.  Indeed, assume there is a $\SpinC$ structure $\spinct$ on the punctured Euler number $n$ disk bundle with $F_{{\Dbundle}- B^4,\spinct}\ne 0$ and which satisfies $\langle c_1(\spinct),[\hatSigma]\rangle+[\Sigma]^2>2g(\Sigma).$ Then we can blow up the disk bundle $p$ times, so that $n-p\le -2g+1$.  The blow-up formula  \cite[Theorem 3.7]{HolDiskFour} indicates that on the blown-up disk bundle $\widehat{\Dbundle}=\Dbundle\#^p\CPbar$ there is a $\SpinC$ structure $\hat{\spinct}$ satisfying 
\begin{itemize}
\item $\langle c_1( \hat{\spinct} ),[{\Sigma}]\rangle=\langle c_1({\spinct}),[{\Sigma}]\rangle$
\item $\langle c_1(\hat{\spinct}), [E_i]\rangle =1$, for the class of each exceptional sphere $E_i$, $i=1,\ldots, p$.
\item $F_{\widehat{\Dbundle}- B^4,\widehat{\spinct}}=F_{{\Dbundle}- B^4,{\spinct}}\ne 0$.
\end{itemize}
Tubing $\Sigma$ to each of the exceptional spheres produces another genus $g$ surface    $\widehat{\Sigma}$ whose  homology class is $[\widehat{\Sigma}]=[\Sigma]+[E_1]+\ldots+[E_p]$. Noting that $[E_i]\cdot [E_j]=0$ if $i\ne j$ and $-1$ if $i=j$, it follows that the self-intersection of $\widehat{\Sigma}$ equals $n-p\le -2g+1$, and we can apply the previous case to its neighborhood.  But \[\langle c_1(\hat{\spinct}),[\widehat{\Sigma}]\rangle+[\widehat{\Sigma}]^2=\langle c_1(\spinct),[{\Sigma}]\rangle+[\Sigma]^2>2g(\Sigma),\]
and therefore  $F_{\nu(\widehat{\Sigma})- B^4,\widehat{\spinct}}=0$.  But the cobordism map for $\widehat{\nu(\Sigma)}-B^4$ (the punctured blown-up disk bundle) factors through the map associated to ${\nu(\widehat{\Sigma})}-B^4$  (the punctured neighborhood  of $\widehat{\Sigma}$), hence must also be zero, a contradiction.

\end{proof}

\begin{remark}\label{circle-error} Lemma 3.5 of \cite{FourBall} states that the map on Floer homology induced by the punctured disk bundle vanishes whenever
$$\langle c_1(\spinct),[\hatSigma]\rangle+[\Sigma]^2\ge 2g(\Sigma).$$ 
Examination of our proof shows that when the Euler number is sufficiently negative the map is actually non-trivial for the $\SpinC$ structure satisfying $\langle c_1(\spinct),[\hatSigma]\rangle+[\Sigma]^2= 2g(\Sigma)$.  Indeed, the map $G_{W_1}$ associated to the $1$-handles has image $\Theta_{top}$.  But this latter class lives in Alexander grading $g$ in the filtration of $\CFa(\#^{2g}S^1\times S^2)$ associated to $B_g$.   Hence it survives in the quotient and inclusion to $CFK^\infty(B_g)\{\min(i,j-g)=0\}$.  The corrected vanishing result, when traced through the arguments of \cite{FourBall}, leads to the following  $4$-genus bound for $\tau$, which is weaker than the bound asserted in op. cit.:
\[  \tau(K)\le g_4(K)+1.\]
We will establish the asserted bound  $\tau(K)\le g_4(K)$ used throughout the literature by exploiting the product formula for arc sums of cobordisms  in conjunction with the additivity of $\tau$ invariants under connected sum.
\end{remark}

Together with the product formula for arc sums, the  previous two lemmas  yield the following vanishing result, which will play a key role in the proof of Theorem \ref{thm:relativeadjunction}.

\begin{theorem}[Vanishing Theorem]\label{thm:cobordismbound} Let $\hatSigma$ be a closed, oriented, surface,  smoothly embedded in a 4-manifold $W$ such that $\partial W=-Y\sqcup Y'$. Then $$F_{W,\spinct}:\HFhat(Y, \spinct|_{Y})\to \HFhat(Y', \spinct|_{Y'})$$ is the zero map for all $\spinct$ satisfying $\langle c_1(\spinct),[\hatSigma]\rangle +[\hatSigma]^2>2g(\hatSigma).$

\end{theorem}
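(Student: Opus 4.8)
The plan is to reduce the general case to the neighborhood computation already carried out in Lemma~\ref{lemma:circlebundle}, using the factorization principle for cobordism maps together with the arc sum product formula (Theorem~\ref{thm:Kun}). First I would fix an arbitrary properly embedded arc $\gamma$ in $W$ running from a basepoint in $Y$ to a point on $\hatSigma$, and let $N=N(Y\cup\gamma\cup\hatSigma)$ be a regular neighborhood as in Lemma~\ref{lemma:arcsumdecomposition}. Writing $W=N\cup_{N'} W'$, where $N'=Y\#\Cbundle$ is the non-trivial boundary component of $N$ and $W'$ is the complementary cobordism from $N'$ to $Y'$, the functoriality of the cobordism maps under composition gives
\[
F_{W,\spinct} = F_{W',\spinct|_{W'}}\circ F_{N,\spinct|_{N}}.
\]
So it suffices to prove that $F_{N,\spinct|_N}\colon\HFhat(Y,\spinct|_Y)\to\HFhat(Y\#\Cbundle,\spinct|_{Y\#\Cbundle})$ vanishes under the stated inequality $\langle c_1(\spinct),[\hatSigma]\rangle+[\hatSigma]^2>2g(\hatSigma)$.

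Next I would invoke Lemma~\ref{lemma:arcsumdecomposition}, which identifies $N$ with the arc sum $\big((Y\times I)\setminus(B^3\times I)\big)\cup_{S^2\times I}\big((\Dbundle-B^4)\setminus(B^3\times I)\big)$, i.e.\ $N = (Y\times I)\otimes(\Dbundle-B^4)$ in the notation of Section~\ref{thm:Kun}, where $\Dbundle$ is the disk bundle over $\hatSigma$ of Euler number $[\hatSigma]^2$. Applying the product formula for arc sums (Theorem~\ref{thm:Kun}) to the $\SpinC$ decomposition $\spinct|_N = \spinc_{Y\times I}\#\,\spinc'$ furnished by Lemma~\ref{lemma:splitting} — one should check the Mayer--Vietoris surjectivity hypothesis, which holds since $H^1$ of the pieces surjects onto $H^1(S^2\times I)=0$ — we get a commutative square identifying $F_{N,\spinct|_N}$ with
\[
F_{Y\times I}\otimes F_{\Dbundle-B^4,\,\spinc'}\;=\;\Id_{\HFhat(Y)}\otimes\, F_{\Dbundle-B^4,\,\spinc'}
\]
under the K\"unneth isomorphisms $\HFhat(Y)\cong\HFhat(Y)\otimes\HFhat(S^3)$ and $\HFhat(Y\#\Cbundle)\cong\HFhat(Y)\otimes\HFhat(\Cbundle)$. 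Since $\langle c_1(\spinc'),[\hatSigma]\rangle+[\hatSigma]^2 = \langle c_1(\spinct),[\hatSigma]\rangle+[\hatSigma]^2 > 2g(\hatSigma)$ (the arc sum does not change the relevant Chern number evaluation, as $[\hatSigma]$ lives entirely in the disk-bundle summand), Lemma~\ref{lemma:circlebundle} tells us $F_{\Dbundle-B^4,\spinc'}=0$, hence the tensor product map vanishes, hence $F_{N,\spinct|_N}=0$, and therefore $F_{W,\spinct}=0$.

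The step I expect to require the most care is the bookkeeping around the $\SpinC$ structures and the Chern-number evaluation: one must verify that $[\hatSigma]\in H_2(W)$ can be pushed into $N$ and indeed into the disk-bundle summand of the arc sum, so that $\langle c_1(\spinct),[\hatSigma]\rangle$ is computed by the restriction $\spinc'$ to $\Dbundle-B^4$ and matches the hypothesis of Lemma~\ref{lemma:circlebundle}, and that Lemma~\ref{lemma:splitting} genuinely applies at each gluing (both the $N'\hookrightarrow W$ splitting for functoriality and the $S^2\times I$ splitting inside $N$). The remaining ingredients — composition law for cobordism maps, Theorem~\ref{thm:Kun}, Lemma~\ref{lemma:arcsumdecomposition}, Lemma~\ref{lemma:circlebundle} — then assemble formally. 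A minor point worth stating explicitly is that the conclusion is independent of the auxiliary choice of arc $\gamma$, which is automatic since $F_{W,\spinct}$ itself does not depend on it.
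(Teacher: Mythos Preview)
Your proposal is correct and follows essentially the same route as the paper's proof: factor through the neighborhood $N$ via the composition law, identify $N$ as an arc sum via Lemma~\ref{lemma:arcsumdecomposition}, apply the product formula Theorem~\ref{thm:Kun}, and reduce to Lemma~\ref{lemma:circlebundle}. The one point the paper makes explicit that you only flag for later is the verification of Lemma~\ref{lemma:splitting} at the outer gluing along $N'=Y\#\Cbundle$: there the relevant surjectivity comes from $H^1(\Dbundle)\twoheadrightarrow H^1(\Cbundle)$, which ensures $H^1(N)\to H^1(N')$ is onto and hence that $\spinct$ really is determined by its restrictions (so the composition law gives a single term, not a sum).
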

\begin{proof} Let $N=N(Y\cup\gamma\cup\hatSigma)$ be a regular neighborhood of $Y$, the surface, and an arc connecting them, and 
write $W$ as $N\cup_{\partial N} W'$ where $W'$ is the complement of $N$. We have identifications (coming from, say, the Mayer-Vietoris sequence) $$H^1(N)\cong H^1(Y)\oplus H^1(\Dbundle)$$ and $$H^1(\partial N)\cong H^1(Y)\oplus H^1(\Cbundle),$$ which are natural with respect to the restriction maps.
Since the restriction map  $H^1(\Dbundle)\to H^1(\Cbundle)$ is surjective, the map $H^1(N)\to H^1(\partial N)$ is also surjective. Lemma \ref{lemma:splitting} then implies that $\spinct=\spinct_1\#\spinct_2$ where $\spinct_1=\spinct|_N$ and $\spinct_2=\spinct|_{W'}$. The composition law for cobordism maps \cite[Theorem 3.4]{HolDiskFour} shows that $F_{W,\spinct}$ factors as $F_{W',\spinct_2}\circ F_{N,\spinct_1}$, and $$\langle c_1(\spinct),[\hatSigma]\rangle +[\hatSigma]^2=\langle c_1(\spinct_1),[\hatSigma]\rangle +[\hatSigma]^2,$$ as the surface is contained in $N$.  It therefore suffices to show that $F_{N,\spinct_1}$ vanishes whenever $\langle c_1(\spinct_1),[\hatSigma]\rangle +[\hatSigma]^2>2g(\hatSigma)$.

Lemma \ref{lemma:arcsumdecomposition} implies that $N$ smoothly decomposes as an arc sum $$(Y\times I)\setminus (B^3\times I) \bigcup_{S^2\times I} (\Dbundle-B^4)\setminus (B^3\times I).$$ Applying Lemma \ref{lemma:splitting} to this decomposition, $\spinct_1=\spincu\#\spincu'$ where $\spincu=\spinct_1|_{Y\times I}$ and $\spincu'=\spinct_1|_{\Dbundle}$. Furthermore, 
$\langle c_1(\spinct_1),[\Sigma]\rangle+[\Sigma]^2=\langle c_1(\spincu'),[\Sigma]\rangle+[\Sigma]^2.$

Now, suppose $\langle c_1(\spincu'),[\hatSigma]\rangle+[\hatSigma]^2>2g(\hatSigma)$ and consider the commutative diagram given by the product formula, Theorem \ref{thm:Kun}:
$$
\begin{CD}
	\HFa(Y,\spincu|_{Y})\otimes\HFa(S^3) 
	@>{   {  F_{Y\times I,\spincu}  } \otimes {F_{  \Dbundle - B^4,\spincu' } }}>>                  
	\HFa(Y,\spincu|_{Y})\otimes\HFa(\Cbundle,\spincu'|_{\Cbundle}) \\
	@V{\cong}VV    \   @V{\cong}VV \\
		\HFa(Y,\spincu|_{Y}) @>{\ \ \ \ \ \ \ \  F_{N,{\spincu\#\spincu'}}\ \ \ \ \ \ }>> \HFa(Y\#\Cbundle, \spincu\#\spincu'|_{Y\#\Cbundle})\\
\end{CD}
$$
Lemma \ref{lemma:circlebundle} now implies that $F_{\Dbundle - B^4,\spincu'}$ is trivial.  Thus, $F_{N,\spincu\#\spincu'}$ is also trivial.
\end{proof}
\bigskip

The remainder of this section is aimed at specifying the  manner in which  $\tau$ invariants constrain the 2-handle cobordism maps, constraints laid out in  Proposition \ref{prop:FourDInterp}.   To make this precise, it will be helpful to establish some numerology derived from the algebraic topology of a handle attachment along a rationally null-homologous knot.  The next two subsections accomplish this, with the final subsection proving the key
proposition.

\subsection{Framings for rationally null-homologous knots} Regardless of its homology class, a knot $K$ has a well-defined meridian $\mu$ which is given by the isotopy class of the boundary of a  disk intersecting $K$ in a single point. A framing for $K$ is equivalent to a choice of curve $\lambda$ in $\partial\nu(K)$ so that the pair $([\mu],[\lambda])$ forms a basis for $H_1(\partial\nu(K))\cong \Z\oplus \Z$. Given an initial choice of $\lambda$, every other choice of framing is given, on the level of homology, by $\lambda+n\mu$ for some $n\in\Z$.

Let $K\subset Y$ be a knot whose homology class has order $q$. In the long exact sequence of the pair $(Y-\nu(K),\partial\nu(K))$, the kernel of  $i_*:H_1(\partial\nu(K))\to H_1(Y-\nu(K))$  is isomorphic to $\Z$ and is generated by the homology class of $S\cap \partial \nu(K)$ where $S$ is a rational Seifert surface for $K$. Returning to the preceding paragraph, for an initial choice of $\lambda$, we can write $S\cap \partial \nu(K)=q\lambda+r\mu$ and the choices of $\lambda$ are in bijection with representatives of the congruence class of $r$ modulo $q$. 

\begin{defn}\label{defn:longitude}
The \emph{canonical longitude}, $\lambda_{\can}$, of $K$ is the unique choice of framing  such that $S\cap \partial \nu(K)=q\lambda_{\can}+r\mu$ with  $0\leq r<q$.
\end{defn}
\noindent Equivalently, for any choice of $\lambda$ the fraction $\frac{r}{q}$, viewed in $\Q\slash \Z$, is the self-pairing of $[K]$ under the linking form on $H_1(Y)$, and $\lambda_{\can}$ is the unique choice of longitude so that $\frac{r}{q}\in\Q$ is the coset representative of the self-pairing lying in the interval $[0,1)$. If $K$ is null-homologous, then $q=1$, $r=0$ and $\lambda_\can$ is the usual Seifert framing.

\subsection{Integer surgery and surgery cobordisms}
For a knot $K$ in a 3-manifold $Y$, let $\Surg$ be the 3-manifold obtained by removing $\nu(K)$ from $Y$ and filling $Y-\nu(K)$ along the $n$-framed longitude, $\lambda_{\can}+n\mu$. Attaching a 4-dimensional 2-handle to $Y\times \{1\}\subset Y\times [0,1]$ along $K\times \{1\}$ with framing $\framing$ determines a cobordism $\Wsurg$ from $Y$ to $\Surg$. As an oriented manifold, $\Wsurg$ has boundary $-Y\sqcup \Surg$.

Two variations of this cobordism interest us here: $\Wsurgneg$ and $\Wsurgdual$. The manifold $\Wsurgneg$ is the cobordism described above from $Y$ to $\Surgneg$ where we assume  $-\framing<0$. On the other hand, $\Wsurgdual$ is $\Wsurg$  with its orientation reversed and viewed as a cobordism in the other direction so that $\Wsurgdual$ has boundary $-\Surg\sqcup Y$ and, viewed as a cobordism, it goes from $\Surg$ to $Y$. Both $\Wsurgneg$ and $\Wsurgdual$ are negative definite for $\framing>0$.

The surgery cobordisms $\Wsurgneg$ and $\Wsurgdual$ induce maps on Floer homology: $$ F_{\Wsurgneg, \spinct}: \HFhat(Y, \spinct|_Y)\to\HFhat(\Surgneg,\spinct|_{\Surgneg})$$ and $$ F_{\Wsurgdual, \spincr}: \HFhat(\Surg, \spincr|_{\Surg})\to\HFhat(Y,\spincr|_{Y})$$ for each $\spinct\in\SpinC(\Wsurgneg)$ and $\spincr\in\SpinC(\Wsurgdual)$. It will be useful to enumerate  these maps.

 To this end, observe that the set of extensions of a fixed $\SpinC$ structure $\spinc\in \SpinC(Y)$ over $\Wsurgneg$ is in affine bijection with classes in $H^2(\Wsurgneg,Y)\cong\Z$.   We  will establish a preferred bijection using (rational) Chern class evaluations.  To do this, first note that the Alexander gradings of the lifts $G^{-1}_{Y,K}(\spinc)$ under the filling map  $G_{Y,K}:\SpinC(Y,K)\rightarrow \SpinC(Y)$ form a coset in $\Q\slash\Z$, denoted $A_{Y,K,[S]}(\spinc)$. Let $k_\spinc$ denote the coset representative in the interval $(-\frac{1}{2},\frac{1}{2}]$.  In these terms, we let \[\Mapneg:=F_{\Wsurgneg, \spinct_m^\spinc}\] denote the map induced on Floer homology by  $\Wsurgneg$, equipped with the unique $\SpinC$ structure $\spinct^\spinc_m$ for which $\spinct^\spinc_m|_Y=\spinc$ and 
\begin{equation}\label{enumeration}
\langle c_1(\spinct_m^\spinc),[\SSurfD]\rangle +[\SSurfD]^2 = 2(k_{\spinc}+m),
\end{equation}
\noindent where $[\SSurfD]\in H_2(\Wsurgneg;\Q)$ is the homology class represented by the core of the 2-handle, ``capped-off" with the rational Seifert surface.  To describe this class, let $D$ denote the core disk of the  2-handle, whose class in $H_2(\Wsurgneg,Y)$ is the generator with $\partial D=-K$. Then $[\SSurfD]$ is the lift of $[D]$ (regarded as a rational class) to $H_2(\Wsurgneg;\Q)$ represented by $D$ capped off with the rational 2-chain  $\frac{1}{\order} \SSurf$, where $\SSurf$ is a rational Seifert surface; that is, $[\SSurfD]=[\frac{1}{\order} \SSurf+  D]$. Strictly speaking, to interpret $\frac{1}{\order} \SSurf+  D$ as a 2-cycle in $C_2(\Wsurgneg;\Q)$ we must pick a homology in $C_2(K;\Q)$ between the rational 1-cycles $\frac{1}{\order} \partial\SSurf$ and $\partial D$, but  the ambiguity introduced by this choice lives in $H_2(K)=0$. 

Similarly, define \[\Mappos:=F_{\Wsurgdual, \spincr_m^\spinc},\] where  $\spincr_m^\spinc$ is the unique $\SpinC$-structure on $\Wsurgdual$ such that $\spincr_m^\spinc|_Y=\spinc$ and 
\begin{equation}
\langle c_1(\spincr_m^\spinc),[\SSurfD]\rangle -[\SSurfD]^2 = 2(k_{\spinc}+m).
\end{equation}
Again, $[D]$ is the generator of $H_2(\Wsurgdual,Y)$ with $\partial D=-K$ and $[\SSurfD]$ denotes the lift of $[D]$ to $H_2(\Wsurgdual;\Q)$ associated to the rational Seifert surface $S$ for $K$.

\subsection{The $\tau$ invariant from a 4-dimensional perspective}
For knots in the 3-sphere, the $\tau$ invariant indicates a threshold in the enumeration of $\SpinC$ structures before which the cobordism maps $\Mapneg$ mentioned above must be nontrivial \cite{FourBall}. For  rationally null-homologous knots $K\subset Y$, analogous results hold for both $\Mapneg$ and $\Mappos$.

\begin{prop}\label{prop:FourDInterp}
Let $\alpha$ be a nontrivial element of $\widehat{HF}(Y,\spinc)$. For $n$ positive and sufficiently large, we have the following:
\begin{itemize}
\item if $m> \tau_{\alpha}(Y,K)-k_\spinc$ then $\alpha\in\Image(\Mappos)$;
\item if $m<\tau_{\alpha}(Y,K)-k_\spinc$ then  $\alpha\notin\Image(\Mappos)$.
\\
\item if $m< \tau_{\alpha}(Y,K)-k_\spinc$ then $\Mapneg(\alpha)\neq 0$;
\item if $m>\tau_{\alpha}(Y,K)-k_\spinc$ then  $\Mapneg(\alpha)= 0$.
\end{itemize} 
Here, as above, $k_\spinc$ denotes the unique element in $(-\frac{1}{2},\frac{1}{2}]$ arising as an Alexander grading of a relative $\SpinC$ structure in $G_{Y,K}^{-1}(\spinc)$.
\end{prop}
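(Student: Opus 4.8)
The plan is to reduce this $4$-dimensional statement to the $3$-dimensional definition of $\tau_\alpha$ via the large surgery theorem, which identifies the hat Floer homology of large integer surgery with a subquotient complex of $\CFKinf(Y,K,\spinc)$, and identifies the $2$-handle cobordism maps $\Mappos^\spinc$ and $\Mapneg^\spinc$ with the natural projection-then-inclusion (respectively, inclusion-then-projection) maps on these subquotient complexes. Concretely, I would invoke the rationally null-homologous analogue of \cite[Theorem 4.1]{Knots} --- see \cite{RationalSurgeries} and \cite{Hedden-Levine-surgery} --- to write, for $n$ sufficiently large, $$\HFa(\Surg,\spinct^\spinc_m|_{\Surg})\cong H_*\big(\CFKinf(Y,[S],K,\spinc)\{\max(i,j-(k_\spinc+m))=0\}\big),$$ with the $\SpinC$ enumeration on the surgered manifold matched to the Alexander grading exactly as in the displayed formula \eqref{enumeration} normalizing Chern class evaluations. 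The key point is that the $\SpinC$ structure $\spinct^\spinc_m$ on $\Wsurgdual$ with $\langle c_1,[\SSurfD]\rangle - [\SSurfD]^2 = 2(k_\spinc+m)$ is precisely the one whose cobordism map realizes the map from $\CFKinf\{i=0\} = \CFa(Y,\spinc)$ to this subquotient, and dually for $\Wsurgneg$.

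Granting this identification, the proposition becomes a purely algebraic statement about the filtered complex $\CFa(Y,\spinc)$. For $\Mappos^\spinc$: its image is (after the identification) the image of $\CFa(Y,\spinc)$ in the quotient complex $\CFa(Y,\spinc)/\Filt_{k_\spinc+m}$ composed back --- more precisely, the relevant map $\widehat{HF}(Y,\spinc)\to\HFa(\Surg)$ is surjective onto the ``top'' subquotient and a class $\alpha$ lies in its image iff $\alpha$ can be represented by a cycle supported in Alexander filtration level $\leq k_\spinc+m$; by the very definition of $\tau_\alpha(Y,K)$ as the minimal such filtration level, this holds iff $k_\spinc + m \geq \tau_\alpha(Y,K)$, i.e. $m \geq \tau_\alpha(Y,K) - k_\spinc$. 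Since the Alexander gradings of generators lie in the coset $k_\spinc + \Z$, the strict inequalities $m > \tau_\alpha - k_\spinc$ versus $m < \tau_\alpha - k_\spinc$ partition the integers cleanly, giving the first two bullet points. For $\Mapneg^\spinc$: this map goes the other way, realized as the composite of the inclusion $\Filt_{k_\spinc+m}(Y,[S],K)\hookrightarrow \CFa(Y,\spinc)$ with a projection --- or rather, dually, $\Mapneg^\spinc(\alpha)\neq 0$ iff $\alpha$ does \emph{not} already live in filtration level $\leq k_\spinc + m$ (so that its image in the complementary subquotient is nonzero), which is exactly the condition $m < \tau_\alpha(Y,K) - k_\spinc$; this gives the last two bullet points. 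The passage between the $\Wsurgneg$ picture and the $\Wsurgdual$ picture is via the duality pairing on Floer homology (Proposition \ref{prop:dualtau}) together with the fact that $\Wsurgdual$ is $\Wsurgneg$ (for the appropriate framing) with reversed orientation, so the two halves of the proposition are formally equivalent; I would prove one and deduce the other.

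I expect the main obstacle to be bookkeeping the $\SpinC$-structure enumeration and the Alexander grading normalization in the rationally null-homologous setting, ensuring that the shift $k_\spinc$ is inserted in exactly the right place so that the cobordism map for the $\SpinC$ structure singled out by \eqref{enumeration} matches the ``level $k_\spinc + m$'' subquotient and not an off-by-one neighbor. Getting the duality/orientation-reversal correspondence between $\Mapneg^\spinc$ and $\Mappos^\spinc$ to respect these normalizations --- in particular tracking how reversing orientation of $\Wsurgneg$ turns the evaluation $+[\SSurfD]^2$ into $-[\SSurfD]^2$ and conjugates $\spinc$ --- is the delicate part; once that is pinned down, the rest is a direct unwinding of the definition of $\tau_\alpha$ as a minimal filtration level. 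The actual large-surgery identification of the cobordism maps with the subquotient maps I would cite rather than reprove, referring to the adaptation of \cite[Section 4]{HolDiskFour} and \cite{Knots} carried out for rationally null-homologous knots in \cite{RationalSurgeries, Hedden-Levine-surgery}, and to the analogous argument for knots in $S^3$ in \cite{FourBall}.
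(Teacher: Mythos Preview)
Your overall strategy---invoke the large surgery theorem to identify $\Mappos^\spinc$ and $\Mapneg^\spinc$ with natural maps between subquotient complexes of $\CFKinf(Y,[S],K,\spinc)$, then read off the result from the definition of $\tau_\alpha$---is exactly what the paper does. However, two steps in your proposal are not right as written.

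First, your ``iff'' characterization of $\Image(\Mappos^\spinc)$ is not justified. Under the large surgery identification, $\Mappos^\spinc$ corresponds to $f_m=\iota_m\circ q_m$, where $q_m$ is the quotient $\sC_\spinc\{\max(i,j-m)=0\}\to\sC_\spinc\{i=0,j\leq m\}$ and $\iota_m$ is the inclusion $\Filt_m\hookrightarrow\CFa(Y,\spinc)$. This factorization gives $\Image(\Mappos^\spinc)\subseteq\Image(I_m)$, yielding one direction. But the quotient $q_m$ need not be surjective on homology, so you cannot conclude $\Image(\Mappos^\spinc)=\Image(I_m)$. The paper instead observes that $\Filt_{m-1}=\sC_\spinc\{i=0,j\leq m-1\}$ sits as a subcomplex of $\sC_\spinc\{\max(i,j-m)=0\}$, and that $f_m$ restricted to it is just $\iota_{m-1}$; hence $\Image(I_{m-1})\subseteq\Image(\Mappos^\spinc)$. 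This sandwich $\Image(I_{m-1})\subseteq\Image(\Mappos^\spinc)\subseteq\Image(I_m)$ is exactly why the proposition only asserts conclusions for strict inequalities and is silent at $m=\tau_\alpha-k_\spinc$.

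Second, your proposed passage from the $\Mappos^\spinc$ case to the $\Mapneg^\spinc$ case via duality rests on a misidentification: $\Wsurgdual=-W^\dagger_n(K)$ is built from the $+n$-framed $2$-handle, whereas $\Wsurgneg=W_{-n}(K)$ uses the $-n$-framing. These are genuinely different $4$-manifolds, not orientation-reversals of one another, so the two halves are not formally dual in the way you suggest. The paper treats the $\Mapneg^\spinc$ case separately, citing the negative large surgery theorem (as in \cite[Theorem 4.2]{Raoux}) and the analogous argument from \cite[Proposition 3.1]{FourBall} and \cite[Proposition 24]{tbbounds}; you should do the same.
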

\begin{proof} 
The proof relies on the ``Large Surgery Theorem" for rationally null-homologous knots; see \cite[Theorem 4.1]{RationalSurgeries} and \cite[Theorem 5.8]{Hedden-Levine-surgery} for the case of positive surgeries and \cite[Theorem 4.2]{Raoux} for the statement for negative surgeries. 

Let $\sC_\spinc$ denote the complex $\CFKinf(Y,[S],K,\spinc)$ and assume $n$ is large enough so that the Large Surgery Theorem holds for $n$-surgery as well as $-n$-surgery along $K$. 

For $n$-surgery, the Large Surgery Theorem implies that the map $$\Mappos:\HFhat(\Surg,\spinct^\spinc_m|_{\Surg})\to \HFhat(Y,\spinc)$$ can be identified with the map induced on homology by $$f_m:\sC_\spinc\{\max(i,j-m)=0\}\to \sC_\spinc\{i=0\}$$
where $f_m=\iota_{m}\circ q_m$ is the composition of the quotient map $$q_m:\sC_\spinc\{\max(i,j-m)=0\}\to\sC_\spinc\{i=0, j\leq m\}$$ followed by the inclusion $$\iota_{m}:\sC_\spinc\{i=0, j\leq m\}=\Filt_{m}(Y,[S],K)\hookrightarrow \sC_\spinc\{i=0\}=\CFa(Y,\spinc).$$ 
 
 Now observe that if $m<\tau_\alpha(Y,[S],K)-k_\spinc$, then $\alpha$ is not in the image of $I_{m}$ and hence not in the image of $\Mappos$.

On the other hand, $\sC_\spinc\{i=0, j\le m-1\}$ naturally includes into the complex $\sC_\spinc\{\max(i,j-m)=0\}$. This gives a factorization of the map $f_m$ through
$$\iota_{m-1}:\sC_\spinc\{i=0, j\leq m-1\}\to \sC_\spinc\{i=0\}.$$ 
If $m>\tau_\alpha(Y,[S],K)-k_\spinc$ then $\alpha$ is in the image of $I_{m-1}$ and is thus also  in the image of $\Mappos$.

The argument for $-n$-surgery is similar and is the same as the one given in \cite[Proposition 3.1]{FourBall} and \cite[Proposition 24]{tbbounds}. 
\end{proof}

\begin{remark} Changing $[\SSurf]$ to the class $[S']$ of a different rational Seifert surface changes $\tau_{\alpha}(Y,K)$ according to Remark \ref{rmk:dependence}.  However, changing $[\SSurf]$ to $[S']$ also changes the labeling of $\spinct_m^\spinc\in \SpinC(W_{\!-n}(K))$, and the two changes coincide. Indeed, according to Equation \eqref{enumeration}, if we let  $m_S$ and $m_{S'}$ denote the numbers associated to $S$ and $S'$ by a fixed extension of $\spinc\in \SpinC(Y)$ over the $2$-handle cobordism then 
\[ m_S- m_{S'} = \frac{1}{2}\langle c_1(\spinct),[D_S]-[D_{S'}]\rangle= \frac{1}{2}\langle c_1(\spinct),i_*(\frac{1}{q}[S- S'])\rangle = \frac{1}{2q}\langle c_1(\spinc),[S-S']\rangle.\]
\end{remark}

\section{Proof of the relative adjunction inequality}\label{sec:mainproof}

Armed with the tools from the previous section, we can now precisely state and prove the relative adjunction inequality, Theorem \ref{thm:relativeadjunction}. Consider a surface $\Sigma$ properly embedded in a 4-dimensional cobordism $W$ from $Y_1$ to $Y_2$, so that $\partial\Sigma= -K_1\sqcup K_2$ is a pair of rationally null-homologous knots. In the long exact sequence of the pair $(W,\partial W)$, we have $\partial_*[\Sigma]=0\in H_1(\partial W;\Q)$. By exactness, we can therefore lift $[\Sigma]$  to $H_2(W;\Q)$.  The lift has an ambiguity stemming from classes in $H_2(\partial W)$ (again, by exactness), but we can fix a lift by choosing rational Seifert surfaces $S_1$ and $S_2$ for $K_1$ and $K_2$, respectively.  Given such surfaces, we obtain a geometric lift as the homology class of the rational 2-chain  \[ \SigmaSS:= \frac{1}{q_1}S_1+\Sigma-\frac{1}{q_2}S_2,\] where $q_i$ denotes the order of $K_i$ in $H_1(Y_i;\Z)$. To interpret the latter as a rational $2$-cycle, we may need to add an auxiliary $2$-chain realizing a homology between the rational $1$-cycles $\partial \Sigma$ and $\partial (\frac{1}{q_1}S_1-\frac{1}{q_2}S_2)$.  This choice is canonical up to homology, however, as it is provided by a rational 2-chain in $C_2(K_1\sqcup K_2)$.

\begin{theorem} 

Let $W$ be a smooth compact oriented $4$-manifold with $\partial W=-Y_1\sqcup Y_2$ and $K_1\subset Y_1$ and $K_2\subset Y_2$ be rationally null-homologous knots. If $F_{W,\spinct}(\alpha)=\beta\neq 0$, then 
\begin{equation}\label{eqn:reladjunction2}
\langle c_1(\spinct),[\SigmaSS]\rangle+[\SigmaSS]^2+2(\tau_{\beta}(Y_2,[S_2],K_2)-\tau_\alpha(Y_1,[S_1],K_1))\leq 2g(\Sigma)
\end{equation}
where $\Sigma$ is any smooth oriented properly embedded surface with boundary $-K_1\sqcup K_2$,  $S_i$ are rational Seifert surfaces for $K_i$, and $[\SigmaSS]$ is the lift of $[\Sigma]$ to $H_2(W;\Q)$ obtained from $S_i$ as above.

Furthermore, the left side of the inequality is independent of $S_1$ and $S_2$.
\end{theorem}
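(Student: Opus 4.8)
The plan is to ``cap off'' both ends of $W$ along the boundary knots and to play the resulting non-vanishing of a cobordism map against the Vanishing Theorem~\ref{thm:cobordismbound}. Fix integers $\framing_1,\framing_2>0$ and form the composite cobordism
$$\Whandles \;=\; \Wone \;\cup_{Y_1}\; W \;\cup_{Y_2}\; \Wtwo$$
from $\Surgone$ to $\Surgtwo$, where $\Wone$ is the (orientation-reversed) $2$-handle cobordism dual to $\framing_1$-surgery on $K_1$ and $\Wtwo$ is the $2$-handle cobordism for $-\framing_2$-surgery on $K_2$, as in Section~\ref{sec:tools}. Capping $\Sigma$ off with the core disk of each $2$-handle yields a \emph{closed} surface $\widehat\Sigma\subset\Whandles$ with $g(\widehat\Sigma)=g(\Sigma)$. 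Since $K_1$ and $K_2$ are rationally null-homologous, the discussion following Lemma~\ref{lemma:splitting} shows that the hypotheses of that lemma hold at both ends, so every $\SpinC$ structure on $\Whandles$ restricting to $\spinct$ on $W$ has the form $\widehat\spinct=\spincr^{\spinc_1}_{m_1}\#\,\spinct\,\#\,\spinct^{\spinc_2}_{m_2}$, with $\spinc_i=\spinct|_{Y_i}$, in the enumeration of Section~\ref{sec:tools}.

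By the composition law \cite[Theorem~3.4]{HolDiskFour}, $F_{\Whandles,\widehat\spinct}=\Maptwo^{\spinc_2}\circ F_{W,\spinct}\circ\Mapone^{\spinc_1}$, where $\Mapone^{\spinc_1}$ is an instance of the map $\Mappos$ of Proposition~\ref{prop:FourDInterp} for $K_1\subset Y_1$, and $\Maptwo^{\spinc_2}$ an instance of $\Mapneg$ for $K_2\subset Y_2$. Taking $\framing_1,\framing_2$ large enough for the Large Surgery Theorem, Proposition~\ref{prop:FourDInterp} gives: if $m_1>\taua-k_{\spinc_1}$ then $\alpha=\Mapone^{\spinc_1}(x)$ for some $x$; and if $m_2<\taub-k_{\spinc_2}$ then $\Maptwo^{\spinc_2}(\beta)\ne0$. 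Hence $F_{\Whandles,\widehat\spinct}(x)=\Maptwo^{\spinc_2}(F_{W,\spinct}(\alpha))=\Maptwo^{\spinc_2}(\beta)\ne0$. As $\widehat\Sigma$ is a closed surface in $\Whandles$, the Vanishing Theorem~\ref{thm:cobordismbound} now forces $\langle c_1(\widehat\spinct),[\widehat\Sigma]\rangle+[\widehat\Sigma]^2\le 2g(\Sigma)$. A homological computation, expressing $[\widehat\Sigma]\in H_2(\Whandles;\Q)$ in terms of $[\SigmaSS]$ and the rational-Seifert-capped core disks $[\SSurfDone],[\SSurfDtwo]$ and then substituting the $\SpinC$-enumeration identities of Section~\ref{sec:tools} (with a sign change on the incoming term, coming from the orientation reversal in $\Wone$ and the $-[\SSurfD]^2$ in its analogue of \eqref{enumeration}), gives
$$\langle c_1(\widehat\spinct),[\widehat\Sigma]\rangle+[\widehat\Sigma]^2 \;=\; \langle c_1(\spinct),[\SigmaSS]\rangle+[\SigmaSS]^2+2(m_2+k_{\spinc_2})-2(m_1+k_{\spinc_1}).$$
Because Alexander gradings in $\HFa(Y_i,\spinc_i)$ lie in $k_{\spinc_i}+\Z$, both $\taua-k_{\spinc_1}$ and $\taub-k_{\spinc_2}$ are integers; choosing $m_1+k_{\spinc_1}=\taua+1$ and $m_2+k_{\spinc_2}=\taub-1$ and combining the last two displays yields \eqref{eqn:reladjunction2} up to an additive error of $4$ on the right.

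To eliminate this constant I would apply the same argument to the $d$-fold arc sum $\Warcsum$ of Section~\ref{sec:tools}: by the product formula, Theorem~\ref{thm:Kun}, $F_{\Warcsum,\spinct^{\otimes d}}(\alpha^{\otimes d})=\beta^{\otimes d}\ne0$, while the associated surface $\Sigmaarcsum$, with $\partial\Sigmaarcsum=-\#^{d}K_1\sqcup\#^{d}K_2$, satisfies $g(\Sigmaarcsum)=d\,g(\Sigma)$ and $\langle c_1(\spinct^{\otimes d}),[\SigmaSSd]\rangle+[\SigmaSSd]^2=d\big(\langle c_1(\spinct),[\SigmaSS]\rangle+[\SigmaSS]^2\big)$, and $\tauad=d\,\taua$, $\taubd=d\,\taub$ by additivity (Proposition~\ref{prop:Additivity}). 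The inequality of the previous paragraph, applied to $\Warcsum$, reads $d\big(\langle c_1(\spinct),[\SigmaSS]\rangle+[\SigmaSS]^2+2(\taub-\taua)\big)\le 2d\,g(\Sigma)+4$; dividing by $d$ and letting $d\to\infty$ gives \eqref{eqn:reladjunction2}. This is exactly the maneuver foreshadowed in Remark~\ref{circle-error}.

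For the independence statement, replacing $S_1$ by another rational Seifert surface $S_1'$ for $K_1$ changes $[\SigmaSS]$ by $i_{*}\big(\tfrac{1}{q_1}[S_1-S_1']\big)$, where $[S_1-S_1']\in H_2(Y_1;\Z)$ is now a genuine class (the $q_1$-fold boundaries cancel). A class pushed in from the boundary $3$-manifold has vanishing self-intersection in $W$ and pairs trivially with every absolute class, so $[\SigmaSS]^2$ is unchanged, while $\langle c_1(\spinct),[\SigmaSS]\rangle$ changes by $\tfrac{1}{q_1}\langle c_1(\spinc_1),[S_1-S_1']\rangle$; by Remark~\ref{rmk:dependence}, $\tau_\alpha(Y_1,[S_1],K_1)$ changes by $\tfrac{1}{2q_1}\langle c_1(\spinc_1),[S_1-S_1']\rangle$, so the term $-2\tau_\alpha(Y_1,[S_1],K_1)$ in \eqref{eqn:reladjunction2} absorbs the change precisely; the case of $S_2$ is identical. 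I expect the principal difficulty to lie in the homological bookkeeping of the second paragraph — computing $[\widehat\Sigma]^2$ and $\langle c_1(\widehat\spinct),[\widehat\Sigma]\rangle$ honestly while keeping track of the rational lifts, the auxiliary $2$-chains in $K_1\sqcup K_2$, and the orientation reversal of $\Wone$ — together with the realization that, since the available vanishing and surgery bounds are sharp only up to an additive constant (cf.\ Remark~\ref{circle-error}), the passage to $d$-fold arc sums is essential.
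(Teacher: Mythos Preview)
Your proposal is correct and follows essentially the same route as the paper: cap with two $2$-handle cobordisms, combine Proposition~\ref{prop:FourDInterp} with the Vanishing Theorem~\ref{thm:cobordismbound} to get the inequality with an additive defect of $4$, then kill the defect via the $d$-fold arc sum and Theorem~\ref{thm:Kun}, and finally verify independence of the Seifert surfaces by the computation you sketch. The only small omission is that the paper first tubes together the components of $\Sigma$ if it is disconnected, so that $g(\widehat\Sigma)=g(\Sigma)$ holds; otherwise your argument matches the paper's.
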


\noindent In the above statement, we emphasize that all the terms on the left-hand side are, in general, rational numbers.  In the special case that both $K_i$ are null-homologous, however, all the terms will be integral. 

\begin{proof} If $\Sigma$ is disconnected, tube together the components to form a new connected surface with the same genus, which we continue to denote by $\Sigma$. 

Let $\Whandles$ be the 4-manifold obtained from $W$ by attaching 2-handles along $K_1$ and $K_2$ with appropriate framings so that $\Whandles$ is diffeomorphic to $$\Wone\cup W\cup \Wtwo$$ for some positive integers $n_1$ and $n_2$, where framings are equated with integers using the canonical longitude from Definition \ref{defn:longitude}. Then $\Whandles$ is a cobordism from $\Surgone$ to $\Surgtwo$. 

Assume that $n_1$ and $n_2$ are both large enough that Proposition \ref{prop:FourDInterp} holds. Let 
$\spinc_1=\spinct|_{Y_1}$ and $\spinc_2=\spinct|_{Y_2}$. Then Proposition \ref{prop:FourDInterp} implies that for $m_1>\taua-k_{\spinc_1}$ we have $\alpha\in\Image(\Mapone)$ and for $m_2<\taub-k_{\spinc_2}$, we have  $\Maptwo(\beta)\neq 0$. Therefore the composition $$F_{\Whandles}=\Maptwo\circ F_{W,\spinct}\circ \Mapone$$ is non-trivial.  

Let $\SigmaDD$ denote the smoothly embedded closed surface obtained by capping off $\Sigma$ with the cores of the added 2-handles, so that $[\SigmaDD]=[-D_1\cup \Sigma\cup D_2]$. Applying the vanishing theorem, Theorem \ref{thm:cobordismbound}, to $\SigmaDD$ implies 
\begin{equation}\label{eqn:initialbound}
 \langle c_1(\spincr_{m_1}^{\spinc_1}\#\spinct\#\spinct_{m_2}^{\spinc_2}),[\SigmaDD]\rangle+[\SigmaDD]^2\leq 2 g(\SigmaDD)= 2 g(\Sigma).
\end{equation}
Observe that 
\begin{align*}
[\SigmaDD]&=[- D_1 -\frac{1}{q_1}\SSurf_1]+ [\SigmaSS]+ [\frac{1}{q_2}\SSurf_2+ D_2]\\
&=-[\SSurfDone]+[\SigmaSS]+[\SSurfDtwo]
\end{align*}
and 
\begin{align*}
 [\SigmaDD]^2= [\SSurfDone]^2+[\SigmaSS]^2+[\SSurfDtwo]^2.
\end{align*}
 Therefore, we can rewrite the left hand side of the inequality in Equation \eqref{eqn:initialbound} as
\begin{align*}
-\langle c_1(\spincr_{m_1}^{\spinc_1}), [\SSurfDone]\rangle +[\SSurfDone]^2
 +\langle c_1(\spinct), [\SigmaSS]\rangle +[\SigmaSS]^2 +\langle c_1(\spinct_{m_2}^{\spinc_2}), [\SSurfDtwo]\rangle +[\SSurfDtwo]^2\\
=-2 (k_{\spinc_1}+m_1)+\langle c_1(\spinct), [\SigmaSS]\rangle +[\SigmaSS]^2+2(k_{\spinc_2}+m_2).
\end{align*}
Thus, $$2((k_{\spinc_2}+m_2)-(k_{\spinc_1}+m_1))+\langle c_1(\spinct), [\SigmaSS]\rangle +[\SigmaSS]^2\leq 2 g(\Sigma)$$
whenever $-(k_{\spinc_1}+m_1)<-\taua$ and $k_{\spinc_2}+m_2<\taub$. Maximizing the left hand side  gives
\begin{equation}\label{eqn:bound2}
2(\taub-\taua-2)+\langle c_1(\spinct), [\SigmaSS]\rangle +[\SigmaSS]^2\leq 2g(\Sigma).
\end{equation}

To finish the proof, we exploit the additivity of the non-constant terms in our inequality. Let $\Gamma$ be an arc on $\Sigma$ with endpoints in $K_1$ and $K_2$ respectively. Take $d$ copies of $W$ labeled $W_1,\ldots, W_d$. In $W_1$ fix $d-1$ parallel copies of $\Gamma$ labeled $\Gamma_2,\ldots \Gamma_d$. Form the arc sum of $W_1$ and $W_2$ along $\Gamma_2$ in $W_1$ and $\Gamma$ in $W_2$. To the resulting manifold, form an arc sum along $\Gamma_3$ with $\Gamma$ in $W_3$. Continue this process to obtain a connected manifold $\Warcsum$, which is a successive arc sum of the $d$ copies of $W$. This manifold contains a surface $\Sigmaarcsum$ that is the result of arc summing $d$ copies of $\Sigma$ with itself along the $\Gamma$ arcs. This surface has boundary $-\#^d K_1\sqcup \#^d K_2$ and genus given by $g(\Sigmaarcsum)=dg(\Sigma)$.

 Now note that $\#^d K_i$ has a rational Seifert surface obtained by banding $d$ copies of the rational Seifert surface $S_i$ for $K_i$;   see Remark \ref{rmk:Seifertsum}. Let $[\SigmaSSd]$ denote the class obtained by capping off the ends of $\Sigmaarcsum$ with these rational Seifert surfaces  for $\#^dK_i$. Naturality of Chern classes, together with a Mayer-Vietoris argument, shows  that $$\langle c_1(\#^d\spinct), [\SigmaSSd]\rangle =d\langle c_1(\spinct), [\SigmaSS]\rangle$$ and $$[\SigmaSSd]^2=d[\SigmaSS]^2.$$

The product formula for arc sums,  Theorem \ref{thm:Kun}, applied to $\Warcsum$ shows that the map on Floer homology in the $\SpinC$ structure $\#^d\spinct$ satisfies $F_{\Warcsum}(\alpha^{\otimes d})=\beta^{\otimes d}$.    This allows us to apply Equation \eqref{eqn:bound2} to $\Warcsum$, yielding 
\begin{align*} 2(\taubd-\tauad-2)+\langle c_1(\#^d\spinct), [\SigmaSSd]\rangle +[\SigmaSSd]^2 \\
=2d(\taub-\taua)-4+d\langle c_1(\spinct), [\SigmaSS]\rangle +d[\SigmaSS]^2\leq 2dg(\Sigma).
\end{align*}
Thus for any choice of $d$ we have, 
$$2(\taub-\taua)-\frac{4}{d}+\langle c_1(\spinct), [\SigmaSS]\rangle +[\SigmaSS]^2\leq 2g(\Sigma).$$
Since all the terms in our inequality are rational, taking $d$ sufficiently large yields Inequality \eqref{eqn:reladjunction2}.

Finally, we demonstrate the independence of the bound in  \eqref{eqn:reladjunction2} on the choices of rational Seifert surfaces. If $S_1'$ and $S_2'$ are different choices of rational Seifert surfaces for $K_1$ and $K_2$, respectively, then $$[\Sigma_{S'_1,S'_2}]=\frac{1}{q_1}[S_1'-S_1]+[\SigmaSS]+\frac{1}{q_2}[S_2-S'_2]$$ where $S_1'-S_1\subset Y_1\times I$ and $S_2-S_2'\subset Y_2\times I$.
Since $[S_i-S_i']^2=0$ in $Y_i\times I$,
\begin{align*}
\langle c_1(\spinct),[\Sigma_{S'_1,S'_2}]\rangle+[\Sigma_{S'_1,S'_2}]^2=&\langle c_1(\spinct),[\SigmaSS]\rangle+[\SigmaSS]^2\\&+\frac{1}{q_1}\langle c_1(\spinc_1),[S_1'-S_1]\rangle  -\frac{1}{q_2} \langle c_1(\spinc_2),[S_2'-S_2]\rangle.
\end{align*} 
On the other hand, by Remark \ref{rmk:dependence} $$2\tau_\beta(Y_2,[S'_2],K_2)=2\tau_\beta(Y_2,[S_2],K_2)+\frac{1}{q_2}\langle c_1(\spinc_2),[S_2'-S_2]\rangle $$ and, $$2\tau_\alpha(Y_1,[S_1'],K_1)=2\tau_\alpha(Y_1,[S_1],K_1)+\frac{1}{q_1}\langle c_1(\spinc_1),[S_1'-S_1]\rangle. $$\end{proof}

We also have a corresponding dual statement, which could be useful in applications involving the contact invariant (see \cite{tbbounds,4Dtight}).
\begin{theorem}
Let $W$ be a smooth compact oriented 4-manifold with $\partial W=-Y_1\sqcup Y_2$. Let $K_1\subset Y_1$ and $K_2\subset Y_2$ be rationally null-homologous knots. If $F^*_{W,\spinct}(\varphi)=\psi$ then $$\langle c_1(\spinct),[\SigmaSS]\rangle+[\SigmaSS]^2+2(\tau^*_\varphi(Y_2,[S_2], K_2)-\tau^*_\psi(Y_1, [S_1], K_1))\leq 2g(\Sigma)$$
where $\Sigma$ is any smooth oriented properly embedded surface with boundary $-K_1\sqcup K_2$.
\end{theorem}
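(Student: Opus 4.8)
The plan is to reduce this dual statement to Theorem~\ref{thm:relativeadjunction} (in the refined form of Equation~\eqref{eqn:reladjunction2}) by feeding it through the duality between the $\tau$ and $\tau^*$ invariants recorded in Proposition~\ref{prop:dualtau}.

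The first step is to reinterpret the hypothesis. Under the identification $\HFa^*(Y,\spinc)\cong\HFa(-Y,\spinc)$, the linear dual $F^*_{W,\spinct}\colon\HFa^*(Y_2,\spinc_2)\to\HFa^*(Y_1,\spinc_1)$ is the map on $\HFa$ induced by $W$ — with its orientation and its $\SpinC$ structure $\spinct$ unchanged — regarded now as a cobordism from $-Y_2$ to $-Y_1$ (the boundary $\partial W=(-Y_1)\sqcup Y_2$ already has the form $(-Y_1)\sqcup(-(-Y_2))$, so $W$ is literally such a cobordism, and the two induced maps are dual in this sense; compare the treatment of the duality pairing in \cite[Section~2]{tbbounds}). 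Thus the hypothesis $F^*_{W,\spinct}(\varphi)=\psi$ becomes $F_{W,\spinct}(\varphi)=\psi$, with $\varphi\in\HFa(-Y_2,\spinc_2)$ and $\psi\in\HFa(-Y_1,\spinc_1)$ both nontrivial. For this reading of $W$, the incoming end $-Y_2$ carries the knot $K_2$ with its orientation reversed, the outgoing end $-Y_1$ carries $K_1$ with its orientation reversed, $\Sigma$ (with its given orientation) is an admissible surface for this data, and $\overline{S_i}$ (the reversed $S_i$) is a rational Seifert surface for the reversed knot in $-Y_i$.

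The second step is to apply Theorem~\ref{thm:relativeadjunction} to this cobordism and translate back. The homological terms behave correctly because the orientation reversals cancel: the lift of $[\Sigma]$ built from the reversed Seifert surfaces is $\tfrac1{q_2}[\overline{S_2}]+[\Sigma]-\tfrac1{q_1}[\overline{S_1}]=-\tfrac1{q_2}[S_2]+[\Sigma]+\tfrac1{q_1}[S_1]=[\SigmaSS]$, and since $W$'s orientation is unchanged its intersection form and the pairing with $c_1(\spinct)$ are unchanged, so those terms come out as $\langle c_1(\spinct),[\SigmaSS]\rangle+[\SigmaSS]^2$ exactly. The $\tau$-term produced by Theorem~\ref{thm:relativeadjunction} is $2\bigl(\tau_\psi(-Y_1,[\overline{S_1}],\overline{K_1})-\tau_\varphi(-Y_2,[\overline{S_2}],\overline{K_2})\bigr)$; invariance of $\tau$ under simultaneously reversing the orientation of the knot and of the chosen rational Seifert surface (the reversal symmetry of knot Floer homology; for the relative $\SpinC$ bookkeeping see \cite{Hedden-Levine-surgery}), followed by Proposition~\ref{prop:dualtau}, turns this into $2\bigl(\tau^*_\varphi(Y_2,[S_2],K_2)-\tau^*_\psi(Y_1,[S_1],K_1)\bigr)$. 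Combining, one obtains the asserted inequality, and the independence of its left-hand side on $S_1$ and $S_2$ is inherited from the corresponding assertion in Theorem~\ref{thm:relativeadjunction}.

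I expect the main obstacle to be precisely this orientation-and-sign bookkeeping: keeping straight the three reversals (turning the cobordism around, reversing the boundary knots, reversing the Seifert surfaces), verifying that they conspire to leave the homological terms exactly as written, and checking that no spurious conjugation of $\SpinC$ structures is introduced. As a cross-check, and an alternative route that avoids duality altogether, one can instead rerun the proof of Theorem~\ref{thm:relativeadjunction} after capping $K_1$ and $K_2$ with $2$-handles: writing $\Whandles=\Wone\cup W\cup\Wtwo$ and factoring $F^*_{\Whandles}=\Mapone^*\circ F^*_{W,\spinct}\circ\Maptwo^*$, one uses the $\tau^*$-analogue of Proposition~\ref{prop:FourDInterp} (valid by the arguments behind \cite[Proposition~3.1]{FourBall} and \cite[Proposition~24]{tbbounds}) to see that, for the appropriate $\SpinC$ structures on the $2$-handle cobordisms, $\varphi\in\Image(\Maptwo^*)$ and $\Mapone^*(\psi)\neq0$, whence $F^*_{\Whandles}\neq0$ and therefore $F_{\Whandles}\neq0$; one then caps $\Sigma$ off with the cores of the $2$-handles, applies the Vanishing Theorem~\ref{thm:cobordismbound}, and removes the additive error term with the arc-sum trick exactly as in Section~\ref{sec:mainproof}.
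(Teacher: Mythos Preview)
Your proposal is correct and follows essentially the same route as the paper: turn $W$ around to a cobordism $W^\dagger$ from $-Y_2$ to $-Y_1$, apply Theorem~\ref{thm:relativeadjunction} there, and then invoke Proposition~\ref{prop:dualtau} to convert the resulting $\tau$ invariants into $\tau^*$ invariants. The paper's proof is terser---it cites \cite[Theorem~3.5]{HolDiskFour} directly for the identification $F_{W^\dagger,\spinct}=F^*_{W,\spinct}$ and does not spell out the knot/Seifert-surface reversals you track---but your more explicit orientation bookkeeping arrives at the same place, and your alternative rerunning-the-argument sketch is a valid (if unnecessary) cross-check.
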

\noindent As above, the sum on the left side is independent of the choices of $S_1$ and $S_2$.

\begin{proof} Let $W^\dagger$ denote the 4-manifold $W$, viewed as a cobordism from $-Y_2\to -Y_1$ instead of $Y_1\to Y_2$. Then by Theorem \ref{thm:relativeadjunction}, if $F_{W^\dagger,\spinct}(\varphi)=\psi$, $$\langle c_1(\spinct),[\SigmaSS]\rangle+[\SigmaSS]^2+2(\tau_\psi(-Y_1, [S_1], K_1)-\tau_\varphi(-Y_2, [S_2], K_2))\leq 2g(\Sigma).$$ 
The result now follows from \cite[Theorem 3.5]{HolDiskFour}, which indicates  $F_{W^\dagger,\spinct}=F^*_{W,\spinct}$, together with   Proposition \ref{prop:dualtau}.
\end{proof}

\section{Applications and Examples}\label{sec:applications} In this section we explore specific instances of the relative adjunction inequality, and their consequences. We begin by considering the trivial cobordism $Y\times I$, and  pointing out some immediate corollaries of Theorem \ref{thm:relativeadjunction}: concordance invariance, ``slice-genus" bounds, and crossing change inequalities.  In Subsection \ref{subsec:SoneStwo} we turn to the next simplest cobordisms: boundary connected sums of copies of $D^2\times S^2$ and $S^1\times B^3$, and knots in their boundary $\#^\ell\SoneStwo$.  There, we also establish a general inequality  for $\tau$ invariants under the $H_1(Y)/\mathrm{Tor}$ action on Floer homology (Proposition \ref{prop:H1}), and use this to give bounds on the minimal  geometric intersection number of knots in a given concordance class with the essential $2$-sphere in $\SoneStwo$ (Proposition \ref{prop:monotone2}). In Subsection \ref{subsec:links} we use our understanding of the inequality for connected sums of $\SoneStwo$ in conjunction with the ``knotification" procedure to produce invariants of links, and establish their properties listed in Theorem \ref{thm:links}.  In addition, we use the concordance intersection number bound mentioned above to show that there are knots in $\SoneStwo$ which are not concordant to knotified links, Proposition \ref{prop:notknotified}.  We conclude with Subsection \ref{subsec:comparison}, where we compare our invariants of links with those introduced by Cavallo and \os-Stipsicz in the special case of grid diagrams. 
\subsection{The case of $Y\times I$}\label{subsec:YtimesI}
The trivial cobordism $Y\times I$ induces the identity map on  Floer homology.  Consequently, the relative adjunction inequality yields genus information in $Y\times I$ for {\em any} non-trivial Floer class.  The following corollaries of Theorem \ref{thm:relativeadjunction} are immediate.

\begin{cor}[Concordance invariance]\label{cor:ConcordanceInvariants}
Let $\alpha$ be a nontrivial Floer class in $\HFa(Y)$. If $K_1$ and $K_2$ are concordant in $Y\times [0,1]$ then $\tau_\alpha(Y,K_1)=\tau_{\alpha}(Y,K_2)$.
\end{cor}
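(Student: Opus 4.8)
The plan is to apply Theorem \ref{thm:relativeadjunction} to the trivial cobordism $W=Y\times[0,1]$. A concordance from $K_1$ to $K_2$ is a smoothly and properly embedded annulus $\Sigma\subset W$ with $\partial\Sigma=-K_1\sqcup K_2$, so that $g(\Sigma)=0$; since $K_1$ and $K_2$ cobound $\Sigma$ they represent the same class in $H_1(Y;\Z)$ and hence share a common order $q$. We may assume $\alpha\in\HFa(Y,\spinc)$ for a single $\SpinC$ structure $\spinc$, and we equip $W$ with the product $\SpinC$ structure $\spinct$ restricting to $\spinc$ on each end, so that $F_{W,\spinct}$ is the identity map on $\HFa(Y,\spinc)$ and in particular $F_{W,\spinct}(\alpha)=\alpha\neq 0$.

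Before substituting into \eqref{eqn:reladjunction2}, I would check that the two homological terms vanish for an appropriate choice of rational Seifert surfaces. The self-intersection term is always zero: every class in $H_2(W;\Q)\cong H_2(Y;\Q)$ is represented by a (rational multiple of an embedded) surface lying in a single level $Y\times\{t\}$, and a parallel copy at a different level is disjoint from it, so the intersection form on $H_2(W;\Q)$ is identically zero and $[\SigmaSS]^2=0$ for every choice of $S_1,S_2$. For the $c_1$-term, observe that changing $S_1$ and $S_2$ alters $[\SigmaSS]$ by an arbitrary element of the subgroup $\tfrac1q\,\mathrm{im}\bigl(H_2(Y;\Z)\to H_2(Y;\Q)\bigr)\subseteq H_2(W;\Q)$, while $[\SigmaSS]$ itself already lies in that subgroup; hence $S_1$ and $S_2$ may be chosen so that $[\SigmaSS]=0$, whereupon $\langle c_1(\spinct),[\SigmaSS]\rangle=0$ as well. (When $Y$ is a rational homology sphere $H_2(W;\Q)=0$ and no choice is necessary.) With such $S_1,S_2$, Theorem \ref{thm:relativeadjunction} yields
\[ 2\bigl(\tau_\alpha(Y,[S_2],K_2)-\tau_\alpha(Y,[S_1],K_1)\bigr)\ \le\ 2g(\Sigma)=0. \]

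For the reverse inequality I would repeat the argument reading $\Sigma$ in the opposite direction, i.e.\ regarding $W$ as a trivial cobordism from $Y$ to $Y$ the other way and $\Sigma$ as a concordance from $K_2$ to $K_1$; the product $\SpinC$ structure still induces the identity and the lift built from the same surfaces $S_2,S_1$ is again $0$, so Theorem \ref{thm:relativeadjunction} now gives $\tau_\alpha(Y,[S_1],K_1)\le\tau_\alpha(Y,[S_2],K_2)$. Combining the two inequalities gives $\tau_\alpha(Y,[S_1],K_1)=\tau_\alpha(Y,[S_2],K_2)$, which is the claimed concordance invariance; the residual dependence of $\tau_\alpha$ on the homology class of the rational Seifert surface is the standard one recorded in Remark \ref{rmk:dependence}, and for knots in $S^3$ (or any rational homology sphere) there is no such dependence and the statement reads $\tau_\alpha(Y,K_1)=\tau_\alpha(Y,K_2)$ verbatim.

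The only genuinely delicate step is the middle one: identifying precisely which lifts of $[\Sigma]$ to $H_2(W;\Q)$ are realized by rational Seifert surfaces and checking that $0$ is among them, so that the homological contribution to \eqref{eqn:reladjunction2} can be eliminated. Once that is in place — together with the fact, already established in Theorem \ref{thm:relativeadjunction}, that the left-hand side of \eqref{eqn:reladjunction2} does not depend on $S_1,S_2$ — the corollary is an immediate specialization of the relative adjunction inequality.
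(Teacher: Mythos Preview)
Your proof is correct and follows exactly the approach the paper intends: the paper simply states that the corollary is ``immediate'' from applying Theorem~\ref{thm:relativeadjunction} to the trivial cobordism $Y\times I$, and you have carefully supplied the details. In particular, your verification that one can choose rational Seifert surfaces so that $[\Sigma_{S_1,S_2}]=0$ when $b_1(Y)>0$ is a point the paper leaves implicit; the rest is a direct specialization of the inequality with $g(\Sigma)=0$ and the identity cobordism map, read in both directions.
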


\begin{cor}[Slice-genus bounds]\label{cor:SlicegenusBounds} 
Let $\alpha$ be a nontrivial Floer class in $\HFa(Y)$. Then \[|\tau_\alpha(Y,K)|\le g(\Sigma),\]
where $\Sigma\subset Y\times [0,1]$ is any smoothly embedded ``slice" surface with $\partial \Sigma= K\subset Y\times\{1\}$.
\end{cor}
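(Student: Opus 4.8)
The plan is to deduce the bound from the relative adjunction inequality (Theorem~\ref{thm:relativeadjunction}) applied to the product cobordism $W=Y\times[0,1]$, which induces the identity on $\HFa(Y)$. First I would convert the one–boundary–component slice surface into a cobordism between two knots: given $\Sigma\subset Y\times[0,1]$ with $\partial\Sigma=K\subset Y\times\{1\}$, choose an embedded arc $\gamma$ in $Y\times[0,1]$ from a point of $Y\times\{0\}$ to an interior point of $\Sigma$, meeting $\Sigma$ only at that endpoint, and tube $\Sigma$ along $\gamma$ (delete a small disk about the endpoint on $\Sigma$ and glue in the annulus $\partial\nu(\gamma)$). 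This produces a properly embedded surface $\tilde\Sigma$ with $\partial\tilde\Sigma=-U\sqcup K$, where $U$ is an unknot contained in a ball in $Y\times\{0\}$, and with $g(\tilde\Sigma)=g(\Sigma)$ since tubing along an arc does not change genus. Such a configuration exists because $K$ is null-homologous in $Y$ (it bounds $\Sigma$); if $\Sigma$ happens to have closed components one simply tubes down the component carrying the boundary.

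Next I would apply Theorem~\ref{thm:relativeadjunction} to $W=Y\times[0,1]$, regarded as a cobordism from $Y$ to $Y$, with the product $\SpinC$ structure $\spinct$ restricting on each end to the $\SpinC$ structure $\spinc$ carrying $\alpha$; since $F_{W,\spinct}=\mathrm{Id}$ we have $F_{W,\spinct}(\alpha)=\alpha\neq 0$, so the inequality yields
\[
\langle c_1(\spinct),[\tilde\Sigma_{S_1,S_2}]\rangle + [\tilde\Sigma_{S_1,S_2}]^2 + 2\bigl(\tau_\alpha(Y,K)-\tau_\alpha(Y,U)\bigr)\le 2g(\Sigma).
\]
Two inputs finish the job. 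The first is that $\tau_\alpha(Y,U)=0$ for every nontrivial $\alpha$: the standard doubly-pointed diagram for an unknot in a ball induces the trivial Alexander filtration on $\CFa(Y,\spinc)$ (every generator in grading $0$), so each nonzero class is carried by a cycle of Alexander grading $0$. The second is that the homological terms vanish. Here $[\tilde\Sigma_{S_1,S_2}]^2=0$ automatically, because for $W=Y\times I$ the Mayer–Vietoris map $H_2(\partial W;\Q)\to H_2(W;\Q)$ is onto, so $H_2(W;\Q)\to H_2(W,\partial W;\Q)$ is zero. For the Chern-class pairing I would invoke the independence of the left-hand side on the choice of rational Seifert surfaces, established as the final assertion of Theorem~\ref{thm:relativeadjunction}: varying $S_2$ alone moves $[\tilde\Sigma_{S_1,S_2}]$ by the image of $H_2(Y\times\{1\};\Q)\cong H_2(W;\Q)$, so $S_2$ may be chosen so that $[\tilde\Sigma_{S_1,S_2}]=0$ without changing the left side. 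With both homological terms zero and $\tau_\alpha(Y,U)=0$ we obtain $\tau_\alpha(Y,K)\le g(\Sigma)$.

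For the reverse inequality I would run the identical argument with $W$ viewed as a cobordism from $Y$ to $Y$ in the opposite direction — still the identity map, with $\tilde\Sigma$ now a cobordism from $K$ to $U$ — which gives $\tau_\alpha(Y,U)-\tau_\alpha(Y,K)\le g(\Sigma)$, i.e.\ $-\tau_\alpha(Y,K)\le g(\Sigma)$; combining the two yields $|\tau_\alpha(Y,K)|\le g(\Sigma)$. The only step with any subtlety is the vanishing of the homological terms: the self-intersection term is immediate, but one must be slightly careful that the Seifert-surface ambiguity can be used to normalize the Chern-class term without disturbing the rest of the inequality. Everything else is formal, which is why the statement is labelled immediate. (Alternatively, one can bypass the appeal to the independence statement by choosing $S_2\subset Y$ directly in the relative class of $\tilde\Sigma$ capped with a disk bounding $U$, so that $[\tilde\Sigma_{S_1,S_2}]$ is null-homologous on the nose.)
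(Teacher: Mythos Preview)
Your proposal is correct and follows essentially the same approach the paper intends: the corollary is declared ``immediate'' from applying Theorem~\ref{thm:relativeadjunction} to the product cobordism $Y\times[0,1]$ (which induces the identity on $\HFa(Y)$), and you have accurately filled in the details the paper omits --- tubing $\Sigma$ to obtain a cobordism from the unknot to $K$, using $\tau_\alpha(Y,U)=0$, arguing the homological terms vanish, and then reversing the cobordism for the opposite inequality. Your parenthetical alternative (choosing $S_2$ in the relative class of $\Sigma$ so that $[\tilde\Sigma_{S_1,S_2}]=0$ on the nose) is the cleaner way to handle the Chern-class term and is exactly in line with the paper's footnote following the corollary, which stipulates that the Seifert surface class used to define $\tau_\alpha$ should agree with that of $\Sigma$.
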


\noindent The inequality above shows that the genus bounds we obtain for surfaces with boundary $K$ in $Y\times I$ are better than any other $4$-manifold with $Y\subset \partial W$.\footnote{Here, we are implicitly assuming the relative homology class of the surface in $W$ equals that of the Seifert surface used to define $\tau$, under the inclusion induced map $H_2(Y,K)\rightarrow H_2(W,K)$.}  This should come as no surprise, since any smooth and proper embedding of a surface in  $Y\times I$ induces an embedding in $W$ by the collar neighborhood theorem.

We can also apply Theorem \ref{thm:relativeadjunction} to yield a general crossing change inequality for our invariants.
\begin{figure}
\includegraphics[height=1.2 in]{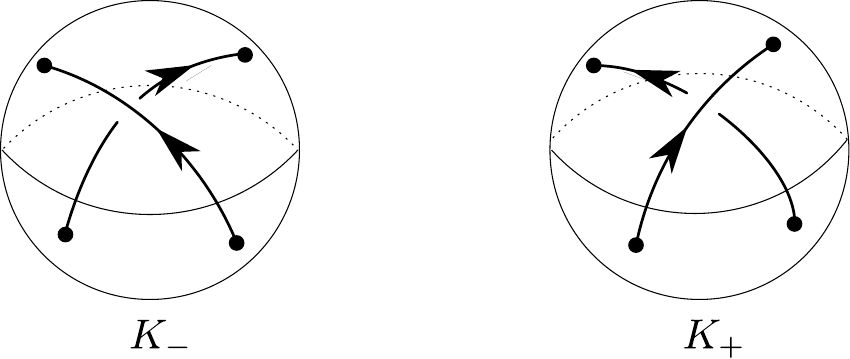}
\caption{Knots differing by a crossing change}
\label{fig:crossingchange}
\end{figure}
\begin{prop}[Crossing change inequalities] \label{prop:crossingchange} Let $K_+, K_-\subset Y$ be rationally null-homologous knots  that are equal outside a 3-ball, in which they differ by a crossing change as in Figure \ref{fig:crossingchange}.  Then for any $\alpha\in \HFa(Y)$ 
\[ \tau_\alpha(Y,[S_-],K_-)\le \tau_\alpha(Y,[S_+],K_+)\le \tau_\alpha(Y,[S_-],K_-)+1,\]
\noindent where the relative homology classes used to define the Alexander grading are represented by rational Seifert surfaces which agree outside the ball.
\end{prop}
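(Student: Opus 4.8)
The plan is to follow the template of the $S^3$ argument in \cite{FourBall}, using Theorem \ref{thm:relativeadjunction} in place of the four‑ball genus bound for $\tau$. First I would recall that a crossing change inside a ball is realized by $(\pm1)$‑framed surgery on an unknotted crossing circle $c\subset Y$ bounding a disk transverse to $K_\pm$; the trace of this surgery is a $2$‑handle cobordism $W$ from $Y$ to itself which, according to the sign of the crossing, is diffeomorphic to the negative blow‑up $(Y\times I)\#\CPbar$ or the positive blow‑up $(Y\times I)\#\CP{2}$, and which contains a properly embedded annulus $A$ with $\partial A=-K_-\sqcup K_+$. Applying the Vanishing Theorem \ref{thm:cobordismbound} to the exceptional sphere $E$ (and to $-E$) shows that on the positive blow‑up every $\HFa$‑cobordism map vanishes, so the useful model is $W=(Y\times I)\#\CPbar$; there the blow‑up formula \cite[Theorem 3.7]{HolDiskFour} gives $F_{W,\spinct}=\Id_{\HFa(Y)}$, hence $F_{W,\spinct}(\alpha)=\alpha\neq0$, for each of the two $\SpinC$ structures $\spinct$ with $\langle c_1(\spinct),[E]\rangle=\pm1$.

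Next I would choose the rational Seifert surfaces $S_\pm$ to agree outside the ball, compute the class $\SigmaSS=\tfrac1q S_-+A-\tfrac1q S_+\in H_2(W;\Q)$ as a multiple of $[E]$ (this is where the independence of the left‑hand side of Theorem \ref{thm:relativeadjunction} on the $S_i$ is invoked), and feed $W$, $A$, and the two $\SpinC$ structures into Theorem \ref{thm:relativeadjunction} and into its dual (the theorem stated at the end of Section \ref{sec:mainproof}, converted via Proposition \ref{prop:dualtau}). With the homological terms in hand, the inequality $\langle c_1(\spinct),[\SigmaSS]\rangle+[\SigmaSS]^2+2\bigl(\tau_\alpha(Y,[S_+],K_+)-\tau_\alpha(Y,[S_-],K_-)\bigr)\le 2g(A)=0$, for the $\spinct$ making the homological part as negative as possible, collapses to $\tau_\alpha(K_+)\le\tau_\alpha(K_-)+1$; running the dual statement with the complementary $\spinct$, and also over $-Y$ (where $K_+$ and $K_-$ again differ by a crossing change), yields $\tau_\alpha(K_-)\le\tau_\alpha(K_+)$. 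As a consistency check on the weaker symmetric bound $|\tau_\alpha(K_+)-\tau_\alpha(K_-)|\le1$, note that the oriented resolution of the crossing gives two band moves joining $K_\pm$ through their common resolution $K_0$, hence a genus‑one cobordism between $K_+$ and $K_-$ inside $Y\times I$, to which Theorem \ref{thm:relativeadjunction} applies with vanishing homological terms.

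I expect the bulk of the work — and the main obstacle — to be homological rather than Floer‑theoretic: fixing orientation conventions so that the negative blow‑up is the cobordism realizing the crossing change in the direction matching the statement, identifying $[\SigmaSS]$ precisely, and verifying that the two $\SpinC$ structures on $W$ contribute exactly the homological amounts that make both inequalities sharp. The step producing the finer bound $\tau_\alpha(K_-)\le\tau_\alpha(K_+)$ (as opposed to just $|\Delta\tau|\le1$) is the most delicate point; it is there that one really uses the negative‑definiteness of $W$ together with the precise pairing of $[\SigmaSS]$ with the exceptional sphere, in the same spirit as the sharpening from $\tau(K)\le g_4(K)+1$ to $\tau(K)\le g_4(K)$ discussed in Remark \ref{circle-error}.
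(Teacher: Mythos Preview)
Your approach is essentially the paper's, but you overcomplicate the key step and underemphasize what you call the ``consistency check.''  The paper's proof has two short parts.  First, the genus-one cobordism in $Y\times[0,1]$ built from two bands (your consistency check) is the \emph{entire} argument for $|\tau_\alpha(K_+)-\tau_\alpha(K_-)|\le 1$; Theorem~\ref{thm:relativeadjunction} applied to it, with all homological terms trivially zero, gives the symmetric bound immediately.  Second, for the finer inequality $\tau_\alpha(K_-)\le\tau_\alpha(K_+)$ the paper uses the concordance from $K_+$ to $K_-$ in $(Y\times I)\#\CPbar$ together with the blow-up formula, exactly as you propose---but the crucial observation is that this concordance has \emph{zero} algebraic intersection with the exceptional sphere (the two strands at the crossing pierce the crossing disk with opposite signs), so $[\SigmaSS]$ pairs trivially with $[E]$ and all homological terms vanish.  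Theorem~\ref{thm:relativeadjunction} then reads $2(\tau_\alpha(K_-)-\tau_\alpha(K_+))\le 0$ with no computation at all.

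Thus your expectation that $[\SigmaSS]$ is a nonzero multiple of $[E]$, requiring delicate bookkeeping and a detour through the dual statement over $-Y$, is where your plan diverges from the paper.  Once you observe the intersection number is zero, the ``most delicate point'' evaporates: no precise pairing calculations, no duality, no orientation-reversal are needed.  The orientation conventions you worry about amount only to checking that the concordance in the negative blow-up runs from $K_+$ to $K_-$ (so that $K_2=K_-$ in Theorem~\ref{thm:relativeadjunction}); this is the standard fact that a positive-to-negative crossing change is realized by a concordance through $\CPbar$.
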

\begin{proof} There is a smooth genus one cobordism in $Y\times [0,1]$ between $K_+$ and $K_-$ obtained by attaching a band to the incoming knot to change the crossing, followed by an additional band that rejoins the additional meridional component.  Applying Theorem \ref{thm:relativeadjunction} to this cobordism shows $|\tau_\alpha(Y,K_+)-\tau_\alpha(Y,K_-)|\le 1$.  

To establish $\tau_\alpha(Y,K_-)\le \tau_\alpha(Y,K_+)$, observe that $K_+$ is concordant to $K_-$ inside $Y\times [0,1]$, blown up in the interior. The blow-up formula, \cite[Theorem 3.7]{HolDiskFour}, implies there are two  $\SpinC$ structures on $\widehat{Y}=(Y\times [0,1])\#\overline{\CP{2}}$ that induce the identity map on $\widehat{HF}(Y)$.  Hence, $F_{\widehat{Y}}(\alpha)=\alpha$ for these $\SpinC$ structures, and we can apply Theorem \ref{thm:relativeadjunction}.  Since the concordance intersects the exceptional sphere zero times algebraically, the  terms on the left side of  the relative adjunction inequality not involving $\tau$ vanish.\end{proof}

\subsection{Genus bounds for knots in $\#^\ell\SoneStwo$}\label{subsec:SoneStwo}
There are some particularly simple $4$-manifolds bounded by $\#^\ell\SoneStwo$ whose maps on Floer homology are understood. In this section we apply the relative adjunction inequality to study the genus problem in this context.  

We first describe a family of $4$-manifolds, labeled by elements in Floer homology.  To do so,  fix an ordering on the components of the connect sum $\#^\ell\SoneStwo$. Under the K{\"u}nneth Theorem, generators of $\HFa(\#^\ell\SoneStwo)$ are of the form $\theta_{\epsilon_1}\otimes \ldots\otimes \theta_{\epsilon_\ell}$ for $\epsilon_i\in\{+,-\}$. For each element $\Theta=\theta_{\epsilon_1}\otimes \ldots\otimes \theta_{\epsilon_\ell}$ there is an associated 4-manifold $W_{\Theta}= W_{\epsilon_1}\natural \ldots \natural W_{\epsilon_\ell}$ where $W_+=S^1\times B^3$ and $W_-=D^2\times S^2$.  
\begin{cor} \label{cor:S1S2genusbound}
Let $W_\Theta$ be the boundary connected sum of copies of $S^1\times B^3$ and $D^2\times S^2$, specified by $\Theta$, as above, and
 $K$ a knot in the boundary of $W_\Theta$. Then $$\tau_\Theta(K)\leq g(\Sigma),$$
 where $\Sigma\subset W_\Theta$ is any smooth and properly embedded surface with boundary $K$.
\end{cor}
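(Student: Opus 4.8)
The plan is to apply Theorem~\ref{thm:relativeadjunction} directly to the cobordism $W_\Theta$, viewed as a cobordism from $S^3$ to $\#^\ell\SoneStwo$ (equivalently, from the empty manifold, after deleting a ball, to $\#^\ell\SoneStwo$; but it is cleanest to take $Y_1=S^3$ with the unknot $K_1=U$ and $Y_2=\#^\ell\SoneStwo$ with $K_2=K$). First I would identify the relevant Floer-theoretic input: the generator of $\HFa(S^3)$ is a single class $1$, and $F_{W_\Theta,\spinct}(1)=\Theta$ for an appropriate $\SpinC$ structure $\spinct$ on $W_\Theta$. This is where the structure of $W_\Theta=W_{\epsilon_1}\natural\cdots\natural W_{\epsilon_\ell}$ is used: $W_+=S^1\times B^3$ is a $1$-handle cobordism whose map sends the generator to $\theta_+$, while $W_-=D^2\times S^2$ is a $2$-handle cobordism (attaching a $0$-framed $2$-handle to the unknot) whose hat cobordism map, in the $\SpinC$ structure with trivial first Chern class, carries the generator of $\HFa(S^3)$ to $\theta_-$ (the bottom generator of $\HFa(\SoneStwo)$, since capping with the core disk of a $0$-framed $2$-handle on the unknot realizes this map). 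Taking the boundary connected sum and invoking the product formula for arc sums, Theorem~\ref{thm:Kun}, with $\spinct=\#_i\spinct_{\epsilon_i}$, gives $F_{W_\Theta,\spinct}(1)=\theta_{\epsilon_1}\otimes\cdots\otimes\theta_{\epsilon_\ell}=\Theta\neq 0$.

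Next I would verify that the homological terms on the left side of~\eqref{eqn:reladjunction} vanish for this $\spinct$. The $1$-handle pieces $S^1\times B^3$ contribute nothing to $H_2$. For each $D^2\times S^2$ summand, the relevant $\SpinC$ structure has $c_1=0$, and the sphere fiber $\{0\}\times S^2$ has self-intersection $0$; moreover the surface $\Sigma$ with $\partial\Sigma=K$ can be arranged (after capping the $S^3$ end with a disk bounding the unknot $U$) to have homology class pairing trivially with $c_1(\spinct)$ and with square zero once one accounts for the ambiguity in the lift $[\SigmaSS]\in H_2(W_\Theta;\Q)$ coming from $H_2(\partial W_\Theta)$. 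Concretely, $c_1(\spinct)=0$ in $H^2(W_\Theta;\Q)$ (it is a sum of zero classes over the $2$-handle summands and lives on $1$-handle summands trivially), so $\langle c_1(\spinct),[\SigmaSS]\rangle=0$; and since $\Theta$ determines a preferred $\SpinC$ structure with $c_1$ torsion, the pairing $[\SigmaSS]^2$ drops out of the bound or, more precisely, the left side simplifies because the relative adjunction bound is independent of the lift and one may choose the lift making $[\SigmaSS]^2$ as small as possible; for $W_\Theta$ this gives $[\SigmaSS]^2\le 0$ with the contribution absorbed. I would then also record $\tau_\alpha(S^3,U)=\tau(U)=0$ for the unknot, so that $\taua=0$.

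Putting these together, Theorem~\ref{thm:relativeadjunction} yields
\begin{equation*}
0+0+2\bigl(\tau_\Theta(K)-0\bigr)\le 2g(\Sigma),
\end{equation*}
i.e.\ $\tau_\Theta(K)\le g(\Sigma)$, as claimed. The main obstacle I anticipate is the bookkeeping in the second step: getting the $\SpinC$-structure identification exactly right (which extension of the trivial $\SpinC$ structure on the ends realizes the map sending the generator to $\Theta$), and carefully handling the non-canonical lift $[\SigmaSS]\in H_2(W_\Theta;\Q)$ so that one legitimately concludes the homological terms contribute nothing — this is exactly the independence-of-lift subtlety flagged after Equation~\eqref{eqn:reladjunction}. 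Everything else is a direct substitution into the already-proved relative adjunction inequality together with the arc-sum product formula.
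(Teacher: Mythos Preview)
Your approach is essentially the same as the paper's: remove a ball from $W_\Theta$ to obtain a cobordism from $S^3$ to $\#^\ell\SoneStwo$, tube $\Sigma$ to the unknot $U$ on the $S^3$ end, verify that the cobordism map sends the generator of $\HFa(S^3)$ to $\Theta$, and apply Theorem~\ref{thm:relativeadjunction}. The paper does not invoke the arc-sum product formula for this step; it simply appeals to the definition of the $1$-handle maps and to a direct computation of the $2$-handle map for a $0$-framed unknot, but your route via Theorem~\ref{thm:Kun} is equally valid.

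Your handling of the homological terms, however, is more convoluted than necessary, and the reasoning about ``choosing the lift making $[\SigmaSS]^2$ as small as possible'' is not correct: the independence-of-lift statement after Equation~\eqref{eqn:reladjunction} concerns the full left-hand side (the $\tau$ invariants shift in a compensating way), not $[\SigmaSS]^2$ in isolation. The clean and correct observation, which the paper uses without comment, is that the intersection form on $H_2(W_\Theta)$ is identically zero: the only contributions to $H_2$ come from the $D^2\times S^2$ summands, each generated by a sphere $\{0\}\times S^2$ of self-intersection $0$. Hence $[\SigmaSS]^2=0$ for \emph{any} lift. Combined with $c_1(\spinct_0)=0$, both homological terms vanish outright, with no lift-juggling required.
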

\begin{proof}
Suppose $\Sigma\subset W_\Theta$ is a properly embedded oriented surface with boundary $K$ in $\#^\ell\SoneStwo$. Remove a 4-ball from $W_\Theta$ and tube $\Sigma$ to the $S^3$ boundary component. This gives a cobordism from the unknot $U$ to $K$ inside $W_\Theta-B^4$. 

The notation for $W_\Theta$ reflects that the map on Floer homology induced by $W_\Theta-B^4$ sends the generator of $\HFa(S^3)$ to $\Theta\in \HFa(\#^\ell\SoneStwo)$.  Indeed, this follows from the definition of the maps associated to $4$-dimensional $1$-handle attachment \cite[Section 4.3]{HolDiskFour} and a calculation of the map induced on Floer homology by attaching a $2$-handle along a zero framed unknot.  This latter calculation can be done directly via adapted Heegaard triple diagrams, or using the surgery exact triangle together with the grading shift formula.   Thus, $\Theta\in \Image(F_{W_{\Theta}-B^4,\spinct_0})$.

Applying Theorem \ref{thm:relativeadjunction} yields: $$\langle c_1(\spinct_0),[\Sigma]\rangle+[\Sigma]^2+2(\tau_{\Theta}(K)-\tau(U))\leq 2g(\Sigma).$$ 
Now $\tau(U)=0$, $\langle c_1(\spinct_0),[\Sigma]\rangle=0$ and $[\Sigma]^2=0$. The result follows.
\end{proof}
\begin{example}\label{example:Whiteheadslice}
Our calculation in Section \ref{Whitehead},  together with Corollary \ref{cor:S1S2genusbound}, implies that the positive Whitehead knot is not slice in $S^1\times B^3$, since $\tautop( Wh^+)=1$. On the other hand, $Wh^+$ bounds a disk $D^2\times S^2$.  Indeed, viewed as a knot in $S^3$, $Wh^+$ bounds a twisted disk which intersects the zero-framed unknot in two points, from which one can obtain a smoothly embedded disk in $D^2\times S^2$ bounded by $Wh^+$. 

That $Wh^+$ doesn't bound a disk in $S^1\times B^3$ can also be seen with far less sophisticated techniques, and indeed one can show it does not even bound a locally flat disk.  Perhaps the first treatment of this can be attributed to Goldsmith \cite[Page 136]{Goldsmith}, who obstructs null-concordance of the Whitehead link using linking numbers between the lifts of one component to the infinite cyclic cover of the other, an approach which also obstructs null-concordance of $Wh^+$ in $\SoneStwo\times [0,1]$ (which, in turn obstructs sliceness in $S^1\times B^3$).   Closely related is Wall's self-intersection number over $\Z[\Z]$, which can be used to provide an invariant of immersed disks bounded by $Wh^+$ in $S^1\times B^3$ which obstructs finding a locally flat embedded disk \cite{Wall}.   Equivalently, one can use Schneiderman's concordance invariants for knots in $\SoneStwo$, which stem from Wall's intersection number \cite{Schneiderman}.   
\end{example}

For knots in  $\#^\ell\SoneStwo$ the invariants $\tau_{\Theta}$ depend on the ordering of the $\epsilon_i$. While the ordering of the $\epsilon_i$ does not impact the diffeomorphism type of $W_{\Theta}$, the genus bound corresponding to $\tau_{\Theta}$ does depend on the ordering of the factors of $W_{\Theta}$.   Indeed, while a diffeomorphism between  $W_{\Theta}$ and $W_{\Theta'}$ induces a diffeomorphism between their boundaries, this map may not preserve a given knot.  Our invariants can therefore potentially distinguish the surfaces a knot bounds in, say, $(S^1\times B^3)\natural (D^2\times S^2)$ from those it bounds in $(D^2\times S^2)\natural (S^1\times B^3)$.  The following example illustrates this point.

\begin{figure}
\includegraphics[height=1.2in]{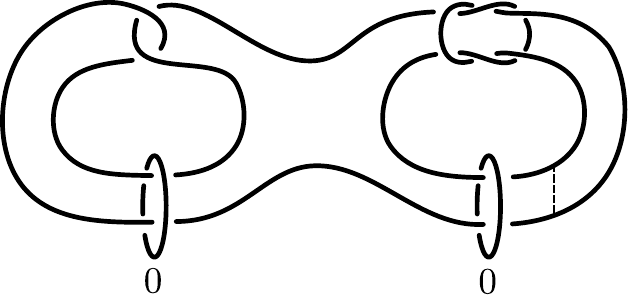}
\caption{$(\SoneStwo,Wh^+)\#(\SoneStwo, K)$.}
\label{connectsum}
\end{figure}

\begin{example}
Consider the knot $(\SoneStwo, Wh^+)\#(\SoneStwo, K)$ shown in Figure \ref{connectsum}. The  band attachment  indicated by the dotted arc in the figure provides a pair of pants cobordism from $K$ to a  two component unlink. Attaching a disk to one component of the unlink shows $K\subset  \SoneStwo$ is concordant in $\SoneStwo\times I$ to the unknot, hence $\tau_{\theta_+}(K)=\tau_{\theta_-}(K)=0$.  Additivity of $\tau$ under connected sum shows
\begin{align*}
\tau_{\theta_-\otimes \theta_+}(Wh^+\#K)&=0 &\text{and}& &\tau_{\theta_+\otimes \theta_-}(Wh^+\#K)&=1.
\end{align*}
Thus, $Wh^+\#K$ is not slice in $(S^1\times B^3)\natural (D^2\times S^2)$. On the other hand, the construction of the slice disk in Example \ref{example:Whiteheadslice}, together with the above, shows that $Wh^+\#K$ is slice in $(D^2\times S^2)\natural(S^1\times B^3)$.
\end{example}

The $\tau_\Theta$ invariants for knots in $\#^\ell\SoneStwo$ satisfy a type of monotonicity.  
\begin{prop}\label{prop:monotone} Let $\Theta=\theta_{\epsilon_1}\otimes \ldots\otimes \theta_{\epsilon_\ell}$ and $\Theta'=\theta_{\epsilon'_1}\otimes \ldots\otimes \theta_{\epsilon'_\ell}$, and suppose $\epsilon_i\le \epsilon'_i$ for all $i$, where we order signs by $-< +$.  Then   $\tau_\Theta(\#^\ell\SoneStwo, K)\le \tau_{\Theta'}(\#^\ell\SoneStwo, K)$ for all knots $K$.
\end{prop}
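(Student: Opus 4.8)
The plan is to reduce to the case where $\Theta$ and $\Theta'$ differ in a single tensor factor, to realize the transition $\theta_+\rightsquigarrow\theta_-$ in that factor by a cobordism carrying an annular (hence genus-zero) ``product'' surface, and to feed this into Theorem \ref{thm:relativeadjunction}.

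First I would reduce to the single-slot case. Let $J=\{\,i:\epsilon_i\neq\epsilon'_i\,\}$, so that $\epsilon_i=-$ and $\epsilon'_i=+$ for $i\in J$; enumerating $J=\{j_1<\dots<j_k\}$, let $\Theta^{(m)}\in\HFa(\#^\ell\SoneStwo)$ be the generator agreeing with $\Theta'$ in slots $j_1,\dots,j_m$ and with $\Theta$ in every other slot, so that $\Theta^{(0)}=\Theta$, $\Theta^{(k)}=\Theta'$, and consecutive generators $\Theta^{(m-1)},\Theta^{(m)}$ differ only in slot $j_m$. Since it suffices to show $\tau_{\Theta^{(m-1)}}(\#^\ell\SoneStwo,K)\le\tau_{\Theta^{(m)}}(\#^\ell\SoneStwo,K)$ for each $m$, I may assume from now on that $\Theta$ and $\Theta'$ differ only in slot $j$, with $\epsilon_j=-$ and $\epsilon'_j=+$.

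Next I would build the relevant cobordism. Write $Y=\#^\ell\SoneStwo$ and let $\gamma\subset Y$ be a simple closed curve generating the first homology of the $j$-th $\SoneStwo$ summand; since $\gamma$ and $K$ are generic $1$-manifolds in the $3$-manifold $Y$, I may isotope $\gamma$, keeping it in its homology class, to be disjoint from $K$. Let $W_\gamma\colon Y\to Y$ be the cobordism realizing the action of $[\gamma]\in H_1(Y)/\mathrm{Tors}$ on $\HFa$: attach a $4$-dimensional $1$-handle to $Y\times\{1\}$ to obtain $Y\#\SoneStwo$, then attach a $2$-handle along the band sum of $\gamma$ with the core circle of the new $\SoneStwo$, with the framing making the outgoing boundary $Y$ again. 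Because $\gamma$ is disjoint from $K$, both handles are attached in the complement of $K\times I\subset Y\times I\subset W_\gamma$, so the annulus $\Sigma:=K\times I$ is a properly embedded surface in $W_\gamma$ with $\partial\Sigma=-K\sqcup K$ and $g(\Sigma)=0$. Using the same rational Seifert surface $S$ at both ends, the lift $[\Sigma_{S,S}]$ lies in the image of $H_2(Y\times I;\Q)$, so $[\Sigma_{S,S}]^2=0$, and for a suitable $\SpinC$ structure $\spinct$ on $W_\gamma$ (with torsion restriction to each $Y$, hence $c_1(\spinct)$ restricting trivially to the product region) we also get $\langle c_1(\spinct),[\Sigma_{S,S}]\rangle=0$.

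Finally I would compute and apply the inequality. Under the K\"unneth isomorphism $\HFa(\#^\ell\SoneStwo)\cong\HFa(\SoneStwo)^{\otimes\ell}$ \cite{HolDiskTwo}, the class $[\gamma]$ acts as the identity on every factor except the $j$-th, where the generator of $H_1(\SoneStwo)$ sends $\theta_+\mapsto\theta_-$ and $\theta_-\mapsto0$; hence $F_{W_\gamma,\spinct}(\Theta')$ equals $\Theta$ (or $\Theta+\Theta'$, should the chosen cobordism realize $\mathrm{Id}+[\gamma]\cdot$ rather than $[\gamma]\cdot$), and in either case it is nonzero. Theorem \ref{thm:relativeadjunction} applied to $(W_\gamma,\Sigma,\spinct)$ then gives $2\bigl(\tau_{F_{W_\gamma,\spinct}(\Theta')}(Y,K)-\tau_{\Theta'}(Y,K)\bigr)\le 2g(\Sigma)=0$. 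If $F_{W_\gamma,\spinct}(\Theta')=\Theta$ this is exactly $\tau_\Theta(Y,K)\le\tau_{\Theta'}(Y,K)$; if instead $F_{W_\gamma,\spinct}(\Theta')=\Theta+\Theta'$, then $\tau_{\Theta+\Theta'}(Y,K)\le\tau_{\Theta'}(Y,K)$, and subadditivity (Proposition \ref{prop:sum}) yields $\tau_\Theta(Y,K)=\tau_{(\Theta+\Theta')+\Theta'}(Y,K)\le\max\{\tau_{\Theta+\Theta'}(Y,K),\tau_{\Theta'}(Y,K)\}=\tau_{\Theta'}(Y,K)$. Telescoping over the slots in $J$ finishes the proof. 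The main obstacle is the construction and identification of $W_\gamma$: that a cobordism supported near $\gamma$ induces (up to a unipotent correction) the $H_1$-action on $\HFa$, which requires care with framings and the enumeration of $\SpinC$ structures, and can instead be routed through the surgery exact triangle. Alternatively one can bypass Theorem \ref{thm:relativeadjunction} altogether by observing that the $H_1(Y)$-action on $\CFKa(Y,K)$ is a \emph{filtered} chain map once $\gamma$ is disjoint from $K$, so that $\tau_{[\gamma]\cdot\alpha}(Y,K)\le\tau_\alpha(Y,K)$ for every class $\alpha$ — though this invokes the standard compatibility of the $H_1$-action with the Alexander filtration, which is not developed in the earlier sections.
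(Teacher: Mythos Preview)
Your main route---building a cobordism $W_\gamma$ whose induced map is (a unipotent twist of) the $H_1$-action and then feeding a product annulus into Theorem~\ref{thm:relativeadjunction}---is considerably more elaborate than what the paper does, and the step you flag as ``the main obstacle'' (identifying $F_{W_\gamma,\spinct}$ with the $H_1$-action) is genuinely nontrivial and not carried out. By contrast, the paper's proof is exactly the alternative you sketch in your final sentence: it states and proves, as the very next proposition (Proposition~\ref{prop:H1}), that the chain-level $H_1$-action $a_\gamma$ on $\CFa(Y)$ is Alexander-filtered, simply because every disk counted in $a_\gamma$ has $n_w(\phi)=0$ and hence $A(\x)-A(\y)=n_z(\phi)-n_w(\phi)=n_z(\phi)\ge 0$. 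Then Proposition~\ref{prop:monotone} follows immediately from the identification of $\HFa(\#^\ell\SoneStwo)$ with $\Lambda^*H^1$ as an $H_1$-module, under which the generator of the $j$-th factor sends $\theta_+\mapsto\theta_-$.

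So your instinct at the end was the right one: the compatibility of the $H_1$-action with the Alexander filtration is not an external fact the paper must import---it is a two-line observation that the paper supplies on the spot. Your cobordism approach, if completed, would work but buys nothing here; it replaces a direct chain-level argument with an appeal to the much heavier Theorem~\ref{thm:relativeadjunction} together with an unproved identification of a surgery map.
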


The proposition will follow from a more general inequality concerning  the $H_1(Y)/\mathrm{Tor}$-action on Heegaard Floer homology:
\begin{prop}\label{prop:H1} Let $Y$ be a  3-manifold, and $\gamma\in H_1(Y)/\mathrm{Tor}$ be a class in the free part of its first homology.  If $A_\gamma(\alpha)=\beta$, then we have
\[ \tau_\beta(Y,K)\le \tau_\alpha(Y,K)\]
for any knot $K$.  Here, $A_\gamma$ denotes the action of $\gamma$ on the Floer homology of $Y$.
\end{prop}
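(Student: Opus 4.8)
The plan is to realize the $H_1$ action by a chain endomorphism of the hat complex that respects the Alexander filtration, and then to push a minimal--grading representative of $\alpha$ through it. Concretely, I would fix a doubly pointed admissible Heegaard diagram $(\Sigma,\alphas,\betas,w,z)$ for $(Y,K)$; this computes $\CFa(Y,\spinc)$ together with its Alexander filtration $A_{w,z}$, and the action of $\gamma\in H_1(Y)/\Tor$ on $\HFa(Y,\spinc)$ is induced by the chain endomorphism $A_\gamma$ of $\CFa(Y,\spinc)$ sending $\x$ to $\sum_{\y}\sum_{\phi}a_\gamma(\phi)\,\#\widehat{\mathcal{M}}(\phi)\,\y$, where the inner sum runs over $\phi\in\pi_2(\x,\y)$ with $\mu(\phi)=1$ and $n_w(\phi)=0$, and $a_\gamma(\phi)\in\F$ is the weight recording the intersection of a $1$--cycle representing $\gamma$ with $\partial\phi$.

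The first and key step is to observe that $A_\gamma$ does not increase the Alexander filtration. Indeed, for any $\phi$ contributing to $A_\gamma$ one has $n_w(\phi)=0$, so positivity of intersections with the subvariety $V_z$ gives
\[
A_{w,z}(\x)-A_{w,z}(\y)=n_z(\phi)-n_w(\phi)=n_z(\phi)\ge 0 .
\]
Hence $A_\gamma$ restricts to a chain map on each $\Filt_r(Y,[S],K)$ and commutes with the inclusions $\iota_r\colon\Filt_r(Y,[S],K)\hookrightarrow\CFa(Y,\spinc)$; passing to homology, $A_\gamma$ commutes with the maps $I_r$, so $A_\gamma(\Image I_r)\subseteq \Image I_r$.

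Granting this, the conclusion is immediate. Setting $r=\tau_\alpha(Y,K)$ and using the description of $\tau_\alpha$ as the minimal Alexander grading of a cycle representing $\alpha$, I would choose a cycle $a$ with $A_{w,z}(a)\le r$ and $[a]=\alpha$; then $A_\gamma(a)$ is a cycle of Alexander grading at most $r$ with $[A_\gamma(a)]=A_\gamma(\alpha)=\beta$, whence $\tau_\beta(Y,K)\le r=\tau_\alpha(Y,K)$. Applied to each of the $\ell$ factors of $\#^\ell\SoneStwo$ in turn, the same argument yields Proposition \ref{prop:monotone}, since there $A_\gamma$ (for $\gamma$ a generator of one $\SoneStwo$ summand) is precisely the interior product that sends $\theta_+$ to $\theta_-$ on that factor and kills $\theta_-$.

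The main obstacle is the bookkeeping behind the key step: one must check that the $H_1$ action is genuinely represented, on the nose, by the filtered chain map described above rather than only up to a possibly unfiltered homotopy, and that this is consistent with the invariance of the filtered homotopy type used to define $\tau_\alpha$. This is handled by the standard chain--level construction of the $H_1$ action on a pointed diagram, together with Proposition \ref{prop:naturality}, which guarantees that $\tau_\alpha$ may be computed on the very diagram on which the filtered model has been set up; no further subtlety enters.
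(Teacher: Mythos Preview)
Your proposal is correct and follows essentially the same route as the paper: realize the $H_1$ action by the standard chain-level endomorphism on a doubly pointed diagram, observe that every contributing disk has $n_w(\phi)=0$ and $n_z(\phi)\ge 0$ so the map is filtered, and conclude. The paper's proof is slightly terser but identical in substance, and your remark deriving Proposition~\ref{prop:monotone} from this matches what the paper does immediately afterward.
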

\begin{proof}
Pick an immersed curve on a pointed Heegaard diagram  for the (pointed) 3-manifold whose associated homology class represents $\gamma\in H_1(Y)/\mathrm{Tor}$.  By abuse of notation, denote this curve by $\gamma$ as well. The  $H_1(Y)/\mathrm{Tor}$ action is defined by chain maps $a_\gamma$, specified on generators by:

$$a_\gamma (\x)=\sum_{\y\in\Ta\cap\Tb}\sum_{\substack{ \phi\in\pi_2(\x,\y)\\   \mu(\phi)=1}} \big(\gamma\cdot(\partial_{\alpha}\phi)\big) \#\widehat{\mathcal{M}}(\phi)\cdot\y$$
where $\gamma\cdot(\partial_{\alpha}\phi)$ is the algebraic intersection number of $\gamma$ with the subset of the boundary of the domain of the Whitney disk lying on the $\alpha$ curves.  Here, we consider the action on $\CFa(Y)$, so we count only pseudo-holomorphic Whitney disks that avoid the hypersurface specified by the basepoint, i.e. $n_w(\phi)=0$.    Since $n_z(\phi)\ge 0$ for any disk counted in the operator above, it follows that \[A(\x)-A(\y)=n_z(\phi)-n_w(\phi)\ge 0\]
where $(\x,\y)$ is any pair of generators such that $\y$ appears with non-zero coefficient in $a_\gamma(\x)$.  Passing to homology, the result follows.   
\end{proof}

\noindent Proposition \ref{prop:H1} implies the  inequalities for $\tau_\Theta$ given in Proposition \ref{prop:monotone}, since the Floer homology of  $\#^\ell\SoneStwo$  is isomorphic, as a module over $H_1/\mathrm{Tor}$, to $\Lambda^* H^1(\#^\ell\SoneStwo)$, with module structure given by the pairing between homology and cohomology.  In this setting, or the general situation where a 3-manifold contains an $S^1\times S^2$ connected summand in its prime decomposition, we also obtain bounds in the opposite direction in terms of the geometric intersection number of the knot with the 2-sphere.  
\begin{prop} \label{prop:monotone2} Let $\alpha\in \HFa(Y)$, and consider $\alpha\otimes \theta_{\pm} \in \HFa(Y\#S^1\times S^2)$, under the isomorphism $\HFa(Y\#S^1\times S^2)\cong \HFa(Y)\otimes \HFa(S^1\times S^2)$.  Then for any rationally null-homologous $K\subset Y\# S^1\times S^2$
\[  \tau_{\alpha\otimes \theta_+}(Y\# S^1\times S^2, K) \le \tau_{\alpha\otimes \theta_-}(Y\# S^1\times S^2, K) + N,\]
where $N$ is half the geometric intersection number between $K$ and the sphere in $S^1\times S^2$.
\end{prop}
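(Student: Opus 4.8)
The plan is to apply Theorem~\ref{thm:relativeadjunction} to a $\SpinC$ cobordism from $Y\#\SoneStwo$ to itself that realizes the map $\alpha\otimes\theta_-\mapsto\alpha\otimes\theta_+$ on hat Floer homology and that contains a genus-$N$ surface bounding $-K\sqcup K$, with vanishing homological terms. First isotope $K$ so that it meets the essential $2$-sphere $\Pi=\{\mathrm{pt}\}\times S^2\subset\SoneStwo\subset Y\#\SoneStwo$ transversally in exactly $2N$ points; since $K$ is rationally null-homologous, $[\Pi]\cdot[K]=0$, so these points split into $N$ positive and $N$ negative ones.

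\textbf{Building the cobordism.} I would first produce a cobordism $W_{00}\colon\SoneStwo\to\SoneStwo$ inducing $\theta_-\mapsto\theta_+$ but with small topology, by composing the $3$-handle cobordism $\SoneStwo\to S^3$ attached along a neighborhood of $\Pi$ (which, being dual to the standard $1$-handle cobordism realizing $F_{(S^1\times B^3)-B^4}\colon 1\mapsto\theta_+$, sends $\theta_-\mapsto 1$ and $\theta_+\mapsto 0$) with the $1$-handle cobordism $S^3\to\SoneStwo$ (sending $1\mapsto\theta_+$). A handle count gives $H_2(W_{00};\Q)=0$. Then set $W_0=(Y\times I)\otimes W_{00}$, the arc sum of the trivial cobordism on $Y$ with $W_{00}$: this is a cobordism from $Y\#\SoneStwo$ to itself with $H_2(W_0;\Q)\cong H_2(Y;\Q)$, and by the product formula for arc sums (Theorem~\ref{thm:Kun}) it satisfies $F_{W_0,\spinct}(\alpha\otimes\theta_-)=\alpha\otimes F_{W_{00},\spinct_{00}}(\theta_-)=\alpha\otimes\theta_+\neq 0$, where $\spinct$ restricts to $Y\#\SoneStwo$ at each end as the $\SpinC$ structure carrying $\alpha\otimes\theta_\pm$.

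\textbf{Building the surface and concluding.} Writing $K=\gamma_Y\cup\gamma$ with $\gamma=K\cap(\SoneStwo\text{-summand})$ carrying all $2N$ intersection points with $\Pi$, I would build $\Sigma\subset W_0$ as the product cobordism on $\gamma_Y$ in the $Y\times I$-factor, capped by a genus-$N$ cobordism from $\gamma$ back to $\gamma$ inside the $W_{00}$-factor: since the $3$-handle caps $\Pi$, running $\gamma$ across $W_{00}$ and recombining its strands so as to recover $\gamma$ costs one handle per canceling pair of intersection points (a band near $\Pi$ cancels a pair, a second band restores connectivity), for a total genus of $N$. This gives a properly embedded $\Sigma$ with $\partial\Sigma=-K\sqcup K$ and $g(\Sigma)=N$. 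Capping off with a rational Seifert surface $S$ for $K$ at both ends, the resulting class $[\Sigma_{S,S}]\in H_2(W_0;\Q)$ has square $0$ (every rational second homology class of $W_0$ is carried by a $3$-manifold slice), and $\langle c_1(\spinct),[\Sigma_{S,S}]\rangle=0$ because all of $\Sigma$'s homology is confined to the $W_{00}$-factor---where $c_1(\spinct)$ is torsion---while its intersection with the $Y\times I$-factor is a product. Theorem~\ref{thm:relativeadjunction}, applied to $W_0$, $\Sigma$, $\spinct$, and $F_{W_0,\spinct}(\alpha\otimes\theta_-)=\alpha\otimes\theta_+$, then reads
\[
0+0+2\bigl(\tau_{\alpha\otimes\theta_+}(Y\#\SoneStwo,[S],K)-\tau_{\alpha\otimes\theta_-}(Y\#\SoneStwo,[S],K)\bigr)\le 2g(\Sigma)=2N,
\]
which is the asserted inequality.

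\textbf{The hard part.} The main obstacle is the surface construction: one must arrange a genus-$N$ cobordism whose outgoing knot is literally $K$ again (not merely cobordant to it), pin down that cancelling the $N$ intersection pairs---and no more---suffices, and confine all of the surface's homology to the $W_{00}$-factor so that the homological terms in the relative adjunction inequality genuinely vanish (the last point being a Mayer--Vietoris computation for the arc sum). Everything else is formal, given Theorems~\ref{thm:Kun} and~\ref{thm:relativeadjunction}.
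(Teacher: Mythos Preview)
Your proposal is correct and follows essentially the same approach as the paper. The paper builds the cobordism directly: attach $N$ bands to $K$ pairing the intersection points with the sphere, attach a $3$-handle along the (now disjoint) sphere to land in $Y$, attach a $1$-handle to return to $Y\#\SoneStwo$, then attach $N$ more bands to recover $K$; the $1$- and $3$-handle maps are cited from \cite[Section~4.3]{HolDiskFour} to see that $\alpha\otimes\theta_-\mapsto\alpha\mapsto\alpha\otimes\theta_+$, and Theorem~\ref{thm:relativeadjunction} finishes. Your packaging via the arc sum $(Y\times I)\otimes W_{00}$ and the product formula (Theorem~\ref{thm:Kun}) produces the same cobordism and the same surface---your ``band near $\Pi$ cancels a pair, a second band restores connectivity'' is exactly the paper's $N+N$ bands---so the two arguments coincide up to bookkeeping. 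One small advantage of the paper's direct formulation is that it avoids splitting $K$ into $\gamma_Y\cup\gamma$ and tracking the surface along the $S^2\times I$ gluing region; conversely, you make explicit the vanishing of the homological terms, which the paper leaves implicit.
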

\begin{proof} Assume the sphere and $K$ have been isotoped to be transverse and to have minimal intersection number.  Since $K$ is rationally null-homologous, the algebraic intersection number with the sphere is zero, hence the intersections come in pairs of opposite signs.  Let $N$ be the number of such pairs.  Attach $N$ bands to $K$ along arcs pairing these points, to arrive at a link that is disjoint from a neighborhood of the the 2-sphere.  Now attach a 4-dimensional 3-handle along this neighborhood, yielding a cobordism with outgoing end diffeomorphic to $Y$.  According to \cite[Section 4.3]{HolDiskFour}, the associated cobordism map sends $\alpha\otimes \theta_-$ to $\alpha\in\HFa(Y)$.  Next attach a 4-dimensional 1-handle to produce a cobordism whose outgoing end is again $Y\# S^1\times S^2$. The cobordism-induced map on Floer homology  \cite[Section 4.3]{HolDiskFour} maps $\alpha$  to $\alpha\otimes \theta_+$.  Finally, attach $N$ bands to the link to recover the original knot $K\subset Y\# S^1\times S^2$.  This produces a knot cobordism from $K$ to itself of genus $N$, in a 4-dimensional cobordism $X$, whose map on Floer homology satisfies $F_X(\alpha\otimes \theta_-)=\alpha\otimes \theta_+$.  Theorem \ref{thm:relativeadjunction} implies:
\[ 2(\tau_{\alpha\otimes \theta_+}(Y\# S^1\times S^2, K)-\tau_{\alpha\otimes \theta_-}(Y\# S^1\times S^2, K))\le 2N,\]
yielding the desired inequality.
\end{proof}

\begin{remark}  It is interesting to compare our invariants with recent work of Manolescu-Marengon-Sarkar-Willis \cite{manolescu2019generalization},  which develops invariants for links in $\#^\ell\SoneStwo$ from versions of Khovanov and Lee homology adapted to this setting.  Their invariants satisfy similar genus bounds in $W_\Theta$ and  agree for many knots and links. They will differ in general, however, due to the behavior of both theories under connect sums and the fact that they differ in the special case of knots in $S^3$ \cite{STau}. \end{remark}

\subsection{Knotification and Invariants of Links}\label{subsec:links} Our genus bounds for null-homologous knots in $\#^\ell\SoneStwo$ produce genus bounds for links in $S^3$ via Ozsv\'ath and Szab\'o's knotification construction. We summarize their construction below. See \cite[Section 2.1]{Knots} for more details.

Let $L\subset Y$ be an oriented $|L|$-component link in a 3-manifold $Y$. The \emph{knotification} of $L$ is an oriented knot $\kappa(L)$ in $Y\#^{|L|-1}\SoneStwo$ formed in the following way. The idea is to turn $L$ into a knot by attaching bands that connect all the components. To make this well-defined,  before banding a pair of link components together, first fix a pair of points, one on each component, and attach a 4-dimensional 1-handle to $Y\times I$ along $Y\times \{1\}$, whose attaching region $S^0\times B^3$ is identified with a neighborhood of the pair of points. Now, band the components together via a band that passes through the 1-handle. Since $L$ has $|L|$ components, choosing our bands optimally produces a knot $\kappa(L)$ after $|L|-1$ band attachments.  By isotopy of the attaching regions and  handleslide amongst the 1-handles and bands,  one sees that $\kappa(L)\subset Y\#^{|L|-1}\SoneStwo$ is well-defined up to diffeomorphism  \cite[Proposition 2.1]{Knots}.

Now, given a link $L$ in a 3-manifold $Y$ and a pair of nonzero elements $\alpha$ in $\HFa(Y)$ and $\Theta$ in $\HFa(\#^{|L|-1}\SoneStwo)$ define $$\tau_{\alpha\otimes\Theta}(Y,L):=\tau_{\alpha\otimes\Theta}(Y\#^{|L|-1}\SoneStwo,\kappa(L)).$$ For links in the 3-sphere, we denote these invariants simply by $\tau_\Theta(L)$, since $\HFa(S^3)$ contains a single nontrivial element. 

The $\tau$ invariants of links inherit an additivity property from the $\tau$ invariants of their knotifications. 

\begin{prop}[Additivity]
Given a pair of links $L_1\subset Y_1$ and $L_2\subset Y_2$, $$\tau_{\alpha_1\otimes \Theta_1}(Y_1,L_1)+\tau_{\alpha_2\otimes \Theta_2}(Y_2,L_2)=\tau_{\alpha_1\otimes\Theta_1\otimes\alpha_2\otimes \Theta_2}(Y_1\#Y_2,L_1\#L_2).$$
Here $L_1\#L_2$ denotes the link resulting from the connect sum of any component of $L_1$ with any component of $L_2$.
\end{prop}
\begin{proof}
First, we claim that $\kappa(L_1\#L_2)$ is isotopic to $\kappa(L_1)\#\kappa(L_2)$. To see this, observe that $L_1\#L_2$ is constructed by first forming the connect sum $Y_1\#Y_2$ and then band summing $L_1$ and $L_2$ together along an arc passing through the separating 2-sphere. The resulting link has $|L_1|+|L_2|-1$ components. To form its knotification, $\kappa(L_1\#L_2)$, we must choose a collection of $|L_1|+|L_2|-2$ arcs along which to knotify. Since the result does not depend on the choice of arcs, we can choose a collection of arcs, none of which intersect the separating 2-sphere in $Y_1\#Y_2$. 

On the other hand, using the same collection of arcs as above, we can first form $\kappa(L_1)$ and $\kappa(L_2)$ and then form the connect sum. The resulting knots are isotopic since we have used the same collection of arcs in both constructions. 

The result now follows from the additivity of the $\tau$ invariants of the knotifications.
\end{proof}

%To see this we need the following lemma.
%
%\begin{lemma}
%Given a pair of links $L_1\subset Y_1$ and $L_2\subset Y_2$, if $L\subset Y_1\#Y_2$ is a link obtained by connect summing any component of $L_1$ with any component of $L_2$, then $\kappa(L)$ is isotopic in $Y_1\#^{|L_1|-1}\SoneStwo \#Y_2\#^{|L_2|-1}\SoneStwo$ to $\kappa(L_1)\#\kappa(L_2)$. 
%\end{lemma}

We can easily extend the results from Subsection \ref{subsec:YtimesI} to the case of links.  Corollary \ref{cor:ConcordanceInvariants} implies that if a pair of knotified links are concordant, then their $\tau$ invariants coincide.  If a pair of links in $Y$ are concordant, Hedden and Kuzbary \cite{Hedden-Kuzbary} describe how to surger the concordance to yield a concordance between their knotifications.  Theorem \ref{thm:links}\ref{thm:links1} follows immediately.

\begin{cor}[Concordance invariance \cite{Hedden-Kuzbary}]\label{cor:ConcordanceInvariance}
If links $L\subset Y$ and $L'\subset Y$ are concordant, then for any choices of $\alpha\in \HFa(Y)$ and $\Theta\in \HFa(\#^{|L|-1}\SoneStwo)$  $$\tau_{\alpha\otimes\Theta}(Y,L)=\tau_{\alpha\otimes\Theta}(Y,L').$$  
\end{cor}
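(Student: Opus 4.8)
The plan is to reduce the statement for links to the concordance invariance for knots already established in Corollary \ref{cor:ConcordanceInvariants}, passing through the knotification construction. Recall that by definition $\tau_{\alpha\otimes\Theta}(Y,L)=\tau_{\alpha\otimes\Theta}(Y\#^{|L|-1}\SoneStwo,\kappa(L))$, and likewise for $L'$. Since a concordance matches up the components of $L$ with those of $L'$, we have $|L|=|L'|$, so both knotifications live in the same ambient $3$-manifold $Y\#^{|L|-1}\SoneStwo$ and the class $\alpha\otimes\Theta\in\HFa(Y)\otimes\HFa(\#^{|L|-1}\SoneStwo)\cong\HFa(Y\#^{|L|-1}\SoneStwo)$ is the same on both sides.

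First I would invoke the result of Hedden and Kuzbary \cite{Hedden-Kuzbary}: given a concordance $C\subset Y\times[0,1]$ from $L$ to $L'$, one may perform surgery on $C$ — along the traces of the $4$-dimensional $1$-handles and bands appearing in the knotification procedures for the two ends — to produce a smoothly embedded annulus $\kappa(C)$ in $(Y\#^{|L|-1}\SoneStwo)\times[0,1]$ with boundary $\kappa(L)\subset (Y\#^{|L|-1}\SoneStwo)\times\{0\}$ and $\kappa(L')\subset (Y\#^{|L|-1}\SoneStwo)\times\{1\}$. That is, $\kappa(L)$ and $\kappa(L')$ are concordant in $(Y\#^{|L|-1}\SoneStwo)\times[0,1]$. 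Here one must check that the surgered cobordism is, on the level of $4$-manifolds, still a product cobordism (the $1$-handle/$3$-handle pairs introduced cancel) and that the surgered surface is genuinely genus zero, i.e.\ an honest concordance; this is exactly the content of the cited construction.

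Next I would apply Corollary \ref{cor:ConcordanceInvariants} to the knots $\kappa(L)$ and $\kappa(L')$ in $Y\#^{|L|-1}\SoneStwo$, with Floer class $\alpha\otimes\Theta$, to obtain
\[
\tau_{\alpha\otimes\Theta}\bigl(Y\#^{|L|-1}\SoneStwo,\kappa(L)\bigr)=\tau_{\alpha\otimes\Theta}\bigl(Y\#^{|L|-1}\SoneStwo,\kappa(L')\bigr),
\]
and unwinding the definition of $\tau_{\alpha\otimes\Theta}(Y,L)$ and $\tau_{\alpha\otimes\Theta}(Y,L')$ completes the proof.

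The only real obstacle lies in the first step — upgrading a concordance of links to a concordance of knotifications inside the \emph{product} cobordism — which requires tracking how surgery interacts with the handles and bands defining $\kappa$; this is precisely what Hedden and Kuzbary carry out in \cite{Hedden-Kuzbary}, so here it amounts to quoting their theorem correctly. Everything else is formal, being just the definition of the link invariants together with the knot-level concordance invariance of Corollary \ref{cor:ConcordanceInvariants}.
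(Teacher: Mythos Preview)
Your proposal is correct and follows essentially the same approach as the paper: invoke the Hedden--Kuzbary result to surger a link concordance into a concordance between the knotifications in $(Y\#^{|L|-1}\SoneStwo)\times[0,1]$, then apply the knot-level concordance invariance of Corollary~\ref{cor:ConcordanceInvariants}. The paper's argument is stated more tersely in the paragraph preceding the corollary, but the logical structure is identical.
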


Observe that if two links differ by a crossing change, then so do their knotifications.  This gives the crossing change inequalities stated in Theorem \ref{thm:links}\ref{thm:links-crossing}.

\begin{cor}[Crossing Change inequalities] \label{cor:Crossingchange} If $L_-,L_+\subset Y$  differ at a single crossing, which is positive  in $L_+$ and negative in $L_-$,\[ \tau_{\alpha\otimes \Theta}(Y,L_-)\le \tau_{\alpha\otimes \Theta}(Y,L_+)\le \tau_{\alpha\otimes\Theta}(Y,L_-)+1,\]
where the relative homology classes of Seifert surfaces used in the knotifications (suppressed) agree, as before.
\end{cor}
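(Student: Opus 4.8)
The plan is to deduce this from the crossing change inequality for knots, Proposition~\ref{prop:crossingchange}, by passing to knotifications. Write $Z = Y\#^{|L|-1}\SoneStwo$. A crossing change is supported in a $3$-ball $B\subset Y$ meeting $L$ in two arcs, and the bands (together with the auxiliary $1$-handles) used to build the knotification can be chosen with support disjoint from $B$. Hence $\kappa(L_-)$ and $\kappa(L_+)$ are knots in $Z$ that agree outside a $3$-ball and differ inside it by precisely the same crossing change. Since the banding operation does not affect the rational homology class, $\kappa(L_\pm)$ are rationally null-homologous, so Proposition~\ref{prop:crossingchange} applies to the pair $(\kappa(L_-),\kappa(L_+))$.

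Next I would match up the auxiliary data. The class $\alpha\otimes\Theta$ lives in $\HFa(Y)\otimes\HFa(\#^{|L|-1}\SoneStwo)\cong\HFa(Z)$ and is the same for $L_-$ and $L_+$. A rational Seifert surface for $\kappa(L_+)$ is obtained from one for $\kappa(L_-)$ by a modification supported in $B$ (for links in $S^3$ this is automatic, everything being null-homologous), so the relative homology classes used to grade the two knot Floer complexes agree outside $B$, which is exactly the hypothesis of Proposition~\ref{prop:crossingchange}. Applying it gives
\[
\tau_{\alpha\otimes\Theta}(Z,\kappa(L_-))\le\tau_{\alpha\otimes\Theta}(Z,\kappa(L_+))\le\tau_{\alpha\otimes\Theta}(Z,\kappa(L_-))+1,
\]
and unwinding the definition $\tau_{\alpha\otimes\Theta}(Y,L):=\tau_{\alpha\otimes\Theta}(Z,\kappa(L))$ from Subsection~\ref{subsec:links} yields the stated inequalities.

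The only step with genuine content is the assertion that the knotification of $L$ can be realized in the complement of a prescribed $3$-ball. This follows from the same isotopies of attaching regions and handleslides among the $1$-handles and bands that establish well-definedness of $\kappa(L)$ up to diffeomorphism \cite[Proposition~2.1]{Knots}: one isotopes $L$ so that $B$ is disjoint from the chosen banding points, then routes all the bands and $1$-handles through a regular neighborhood of a spanning tree avoiding $B$. I expect this point, together with the routine care needed with basepoints and connecting paths in the knotification construction, to be the main obstacle; it is nonetheless standard.
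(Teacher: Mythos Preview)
Your proposal is correct and follows essentially the same approach as the paper: the paper's proof is the single sentence ``Observe that if two links differ by a crossing change, then so do their knotifications,'' after which it invokes the knot case. You have simply filled in the details of why the knotification can be performed away from the crossing ball and why the Seifert surface data can be made compatible, which the paper leaves implicit.
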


We also recover the slice-genus bounds for links stated in Theorem \ref{thm:links}\ref{thm:links2}:
\begin{prop}[Slice-genus bounds]  \label{prop:SlicegenusBounds-links} If  $\Sigma\subset Y\times[0,1]$ is a smoothly embedded surface with boundary $ L\subset Y\times\{1\}$ its Euler characteristic satisfies \[2|\tau_{\alpha\otimes\Theta}(Y,L)|\le |L|-\chi(\Sigma),\]
for any choices of $\alpha\in \HFa(Y)$ and $\Theta\in \HFa(\#^{|L|-1}\SoneStwo)$.  
\end{prop}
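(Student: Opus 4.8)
The plan is to reduce to the knotification and invoke the slice--genus bound for knots over a product cobordism. By definition, $\tau_{\alpha\otimes\Theta}(Y,L)=\tau_{\alpha\otimes\Theta}(Y\#^{|L|-1}\SoneStwo,\kappa(L))$, and $\alpha\otimes\Theta$ is nontrivial in $\HFa(Y\#^{|L|-1}\SoneStwo)$ by the K{\"u}nneth theorem, so Corollary~\ref{cor:SlicegenusBounds} will apply once we exhibit a smoothly and properly embedded surface $\widehat\Sigma\subset (Y\#^{|L|-1}\SoneStwo)\times[0,1]$ with $\partial\widehat\Sigma=\kappa(L)\subset (Y\#^{|L|-1}\SoneStwo)\times\{1\}$ and $g(\widehat\Sigma)=\tfrac12(|L|-\chi(\Sigma))$. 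I would use Corollary~\ref{cor:SlicegenusBounds} here rather than Theorem~\ref{thm:relativeadjunction} applied to the knotification cobordism itself, since the latter realizes only the class $\Theta_{top}$, whereas the slice--genus bound for knots holds for every nonzero Floer class. We may assume $\Sigma$ has no closed components.

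To build $\widehat\Sigma$, first I would recall that $\kappa(L)$ is obtained from $L$ by attaching $|L|-1$ bands, each running over one of the $1$-handles that produce the summands $\SoneStwo$, with feet near chosen pairs of points of $L$. The entire knotification can be carried out inside a collar $(Y\#^{|L|-1}\SoneStwo)\times[1-\epsilon,1]$: the tubes and bands are kept within this collar, and the point is that undoing the bands by the reverse saddle moves returns the link $L$, lying in the $Y$-part of $Y\#^{|L|-1}\SoneStwo$ and disjoint from the essential $2$-spheres. Consequently $\Sigma$, reparametrized to lie in $(Y\#^{|L|-1}\SoneStwo)\times[0,1-\epsilon]$ and pushed off the connected-sum regions, glues along $L$ to the band cobordism to yield a properly embedded surface $\widehat\Sigma$ with boundary $\kappa(L)$.

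Next I would compute the genus. Each band attachment drops the Euler characteristic by one, so $\chi(\widehat\Sigma)=\chi(\Sigma)-(|L|-1)$. The $|L|-1$ bands connect all components of $L=\partial\Sigma$ and hence all components of $\Sigma$ (each of which meets $L$, by assumption), so $\widehat\Sigma$ is connected with one boundary circle; thus $\chi(\widehat\Sigma)=1-2g(\widehat\Sigma)$, giving $2g(\widehat\Sigma)=|L|-\chi(\Sigma)$. Corollary~\ref{cor:SlicegenusBounds} then yields $|\tau_{\alpha\otimes\Theta}(Y\#^{|L|-1}\SoneStwo,\kappa(L))|\le g(\widehat\Sigma)=\tfrac12(|L|-\chi(\Sigma))$, and unwinding the definition of $\tau_{\alpha\otimes\Theta}(Y,L)$ finishes the argument.

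The main obstacle is the geometric assertion of the second paragraph: that the knotification $1$-handles and bands can be positioned so that, together with $\Sigma$, they form an embedded surface inside the \emph{product} $(Y\#^{|L|-1}\SoneStwo)\times I$. This is a routine but slightly delicate handle-and-band manipulation, closely analogous to the surgering of concordances used by Hedden and Kuzbary \cite{Hedden-Kuzbary} for Corollary~\ref{cor:ConcordanceInvariance}.
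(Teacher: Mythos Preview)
Your argument is correct and takes a genuinely different route from the paper's. The paper does not embed everything in the product $(Y\#^{|L|-1}\SoneStwo)\times I$; instead it applies Theorem~\ref{thm:relativeadjunction} directly to \emph{non-product} cobordisms. For the bound $2\tau_{\alpha\otimes\Theta}\le |L|-\chi(\Sigma)$, the paper attaches $1$- and $2$-handles to the outgoing end of $Y\times I$ (i.e.\ takes the arc sum with $W_\Theta-B^4$) to obtain a cobordism carrying $\alpha$ to $\alpha\otimes\Theta$ for each decomposable tensor $\Theta$, bands $\Sigma$ up to a cobordism from the unknot to $\kappa(L)$, and applies the relative adjunction inequality. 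For the reverse bound it attaches $3$- and $2$-handles along the incoming end to produce a cobordism sending $\alpha\otimes\Theta$ to $\alpha$ and argues similarly. Extending from decomposable to arbitrary $\Theta$ then requires two further ingredients: subadditivity (Proposition~\ref{prop:sum}) for one direction and monotonicity under the $H_1/\mathrm{Tor}$-action (Proposition~\ref{prop:H1}) for the other.

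Your reduction to Corollary~\ref{cor:SlicegenusBounds} is cleaner on both counts: it yields the two-sided bound in one stroke and applies immediately to every nonzero class $\alpha\otimes\Theta$, with no appeal to subadditivity or the $H_1$-action. The cost is the geometric step you flag. It does go through: choose the connect-sum balls for the knotification in $Y$ away from the image of $\Sigma$ under the projection $Y\times I\to Y$ (possible by dimension count, since $\Sigma$ is $2$-dimensional), so that $\Sigma$ lies in $(Y\setminus\text{balls})\times I\subset (Y\#^{|L|-1}\SoneStwo)\times I$; the resulting knotification is diffeomorphic to the standard one, and by Proposition~\ref{prop:naturality} this does not affect $\tau_{\alpha\otimes\Theta}$. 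One small point: your sentence ``We may assume $\Sigma$ has no closed components'' is best read as a standing hypothesis rather than a reduction, since adjoining a sphere to $\Sigma$ would make the stated inequality false; this convention is implicit in the paper as well.
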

\begin{proof} By attaching $1$- and $2$-handles to the outgoing end of $Y\times[0,1]$ (with the latter attached along $0$-framed unknots), we can find cobordisms from $Y$ to $Y\#^{|L|-1}\SoneStwo$ which map $\alpha$ to $\alpha\otimes \Theta$ for any generating decomposable tensor   $\Theta=\theta_{\epsilon_1}\otimes \ldots\otimes \theta_{\epsilon_{|L|-1}}$  (equivalently, we can take an arc sum of $W_\Theta-B^4$ with $Y\times [0,1]$). Attach bands to $\Sigma$ in the $1$- and $2$-handles to yield a cobordism $\Sigma_\Theta$ from $L$ to $\kappa(L)$ of Euler characteristic equal to minus the number of bands, $\chi(\Sigma_\Theta)=1-|L|$.   Puncturing $\Sigma\cup \Sigma_\Theta$ and tubing the new boundary to the incoming end $Y\times\{0\}$, yields a cobordism from the unknot $U$ to $\kappa(L)$ with Euler characteristic equal to $\chi(\Sigma\cup \Sigma_\Theta)-1=\chi(\Sigma)+1-|L|-1$.  Theorem \ref{thm:relativeadjunction} implies 
\[  2(\tau_{\alpha\otimes\Theta}(Y\#^{|L|-1}\SoneStwo,\kappa(L))- \tau_\alpha(Y,U))\le |L|-\chi(\Sigma)\]
The above holds for any generating decomposable tensor.  To obtain the inequality for arbitrary $\Theta$, we observe that such a vector can be written as a sum of generating elements to which the inequality applies, and then appeal to  the subadditivity of $\tau$ invariants, Proposition \ref{prop:sum}. 

 For the reverse inequality, note that we can alternatively produce a cobordism from $Y\#^{|L|-1}\SoneStwo$ to $Y$ which maps  $\alpha\otimes \Theta$ to $\alpha$, again for any choices of $\alpha$ and decomposable tensor $\Theta=\theta_{\epsilon_1}\otimes \ldots\otimes \theta_{\epsilon_{|L|-1}}$.  Such a cobordism is obtained from $(Y\#^{|L|-1}\SoneStwo)\times [0,1]$ by attaching $3$- and $2$-handles to kill the apparent 2-spheres, with the choice of handle used to kill a 2-sphere determined by the decomposable tensor.   This shows $-2\tau_{\alpha\otimes \Theta}(Y,L)\le |L|-\chi(\Sigma)$ for such $\Theta$. For an arbitrary non-zero class $\Theta$, we appeal to the monotonicity of $\tau$ invariants with respect to the $H_1/\mathrm{Tor}$ action, Proposition \ref{prop:H1}.  Given any non-zero $\Theta$ one can find a sequence of curves $\gamma_1,...,\gamma_n$ for which $A_{\gamma_n}\circ\ldots\circ A_{\gamma_1}(\alpha\otimes\Theta)=\alpha\otimes\Theta_{bot}$.  This shows that $\tau_{\alpha\otimes \Theta_{bot}}\le \tau_{\alpha\otimes \Theta}$ which, when negated, yields the reverse inequality for arbitrary $\Theta$ (see the proof of Proposition \ref{prop:Monotonicity} for further details) \end{proof}

\noindent  We define the smooth ``slice-genus" of an oriented link in $S^3$ to be
\[ g_4(L):= \mathrm{min} \left\{\ \frac{|L|-\chi(\Sigma)}{2} \ \Big| \ \Sigma\hookrightarrow B^4, \mathrm{smooth, with}\ \partial \Sigma=L\right\}.\]
\noindent This definition of genus may seem strange, placed in comparison to the standard notion of the number of ``holes".  However, the definition here  better   captures distinctions in complexity (in the spirit of the Thurston norm), and is more tightly connected to Floer-type invariants, e.g. \cite[Theorem 1.1]{NiKnotLink}.  Indeed, specializing to links in $S^3$, Proposition \ref{prop:SlicegenusBounds-links} shows that all the $\tau$ invariants of links produce slice-genus bounds.
\begin{cor}
If $L\subset S^3$ is an oriented link, and $\Theta$ any class in $\HFa(\#^{|L|-1}\SoneStwo)$ $$|\tau_\Theta(L)|\leq g_4(L).$$
\end{cor}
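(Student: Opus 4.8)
The plan is to deduce this directly from the link slice-genus bound, Proposition \ref{prop:SlicegenusBounds-links}, specialized to $Y=S^3$. First I would recall that $\HFa(S^3)\cong\F$ is generated by a single nonzero class, which I will call $\alpha$; under the K\"unneth identification $\HFa(S^3)\otimes_\F \HFa(\#^{|L|-1}\SoneStwo)\cong \HFa(\#^{|L|-1}\SoneStwo)$, the class $\alpha\otimes\Theta$ corresponds to $\Theta$, so that by the definition of the link invariants in Subsection \ref{subsec:links} we have $\tau_\Theta(L)=\tau_{\alpha\otimes\Theta}(S^3,L)$.

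Next, given any smoothly embedded oriented surface $\Sigma\hookrightarrow B^4$ with $\partial\Sigma=L\subset S^3=\partial B^4$, I would remove a small open $4$-ball from the interior of $B^4$, chosen disjoint from $\Sigma$. The complement is diffeomorphic to $S^3\times[0,1]$, and $\Sigma$ now sits inside $S^3\times[0,1]$ as a smoothly and properly embedded surface with $\partial\Sigma=L$ contained in the outer boundary component $S^3\times\{1\}$; its Euler characteristic is unchanged by this puncturing. Thus $\Sigma$ is a ``slice'' surface in $S^3\times[0,1]$ in precisely the sense required by Proposition \ref{prop:SlicegenusBounds-links}.

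Then I would apply Proposition \ref{prop:SlicegenusBounds-links} with $Y=S^3$ to this $\Sigma$, obtaining $2|\tau_{\alpha\otimes\Theta}(S^3,L)|\le |L|-\chi(\Sigma)$, i.e.\ $2|\tau_\Theta(L)|\le |L|-\chi(\Sigma)$. Taking the infimum over all such surfaces $\Sigma$ --- equivalently, choosing one realizing $g_4(L)$ --- yields $2|\tau_\Theta(L)|\le 2g_4(L)$, which is the claimed inequality. I do not anticipate any genuine obstacle here: the corollary is a formal consequence of the more general proposition together with the elementary observation that a slice surface in $B^4$ becomes, after puncturing $B^4$ in its interior, a slice surface of the same Euler characteristic in $S^3\times[0,1]$.
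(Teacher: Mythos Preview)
Your proposal is correct and is essentially the same as the paper's approach: the corollary is stated as an immediate specialization of Proposition~\ref{prop:SlicegenusBounds-links} to $Y=S^3$, and you have simply spelled out the (elementary) passage from a surface in $B^4$ to one in $S^3\times[0,1]$ via puncturing.
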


\noindent Different choices of $\Theta$ potentially give different genus bounds. However, the bounds obtained via $\Theta_{top}$ and $\Theta_{bot}$ will always be the best.  We make this precise with the  monotonicity property,  Theorem \ref{thm:links}\ref{thm:links-monotoncity}, which we now establish.

\begin{prop}[Monotonicity] \label{prop:Monotonicity} If $\Theta'=\iota_x(\Theta)$, where  $\iota_x$ denotes the interior product with a class $x\in H_1(\mathbb{T}^{|L|-1})$, then 
\[ \tau_{\alpha\otimes \Theta'}(Y,L)\le \tau_{\alpha\otimes \Theta}(Y,L)\le \tau_{\alpha\otimes\Theta'}(Y,L)+1.\]
In particular, if $\tautop(L)$ and $\taubot(L)$ denote the invariants corresponding to the unique elements in $H^*(\mathbb{T}^{|L|-1})$ of maximal and minimal grading, respectively, then
\[ \taubot(L)\le \tau_\Theta(L)\le \tautop(L)\le \taubot(L)+|L|-1.\]
\end{prop}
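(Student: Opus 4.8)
The plan is to prove the two displayed inequalities for $\Theta'=\iota_x(\Theta)$ first, and then to derive the chain $\taubot(L)\le\tau_\Theta(L)\le\tautop(L)\le\taubot(L)+|L|-1$ by iterating them, using the identification $\HFa(\#^{|L|-1}\SoneStwo)\cong\Lambda^* H^1(\mathbb{T}^{|L|-1})$ together with subadditivity (Proposition~\ref{prop:sum}). Throughout one uses the definition $\tau_{\alpha\otimes\Theta}(Y,L)=\tau_{\alpha\otimes\Theta}(Y\#^{|L|-1}\SoneStwo,\kappa(L))$ and the fact that the $H_1(Y)/\Tors$--action is compatible with the K\"unneth decomposition, so that a class $x$ in the free summand $H_1(\mathbb{T}^{|L|-1})$ acts on $\HFa$ as $\mathrm{id}\otimes\iota_x$; here one uses that on an $\SoneStwo$--summand the generator of $H_1$ sends $\theta_+\mapsto\theta_-$ and $\theta_-\mapsto 0$, which is precisely contraction on $\Lambda^* H^1$.

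For the lower bound I would pick a curve $\gamma$ representing $x$ inside $Y\#^{|L|-1}\SoneStwo$; then $A_\gamma(\alpha\otimes\Theta)=\alpha\otimes\iota_x(\Theta)=\alpha\otimes\Theta'$, and Proposition~\ref{prop:H1} immediately gives $\tau_{\alpha\otimes\Theta'}(Y,L)\le\tau_{\alpha\otimes\Theta}(Y,L)$.

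The upper bound is the crux. The goal is to build a smooth genus--one cobordism from $\kappa(L)$ to itself inside a $4$--manifold $X$ with two boundary copies of $Y\#^{|L|-1}\SoneStwo$, equipped with a $\SpinC$ structure $\spinct$, so that $F_{X,\spinct}(\alpha\otimes\Theta')=\alpha\otimes\Theta$ and the homological term $\langle c_1(\spinct),[\Sigma]\rangle+[\Sigma]^2$ of \eqref{eqn:reladjunction} vanishes; Theorem~\ref{thm:relativeadjunction} then yields $\tau_{\alpha\otimes\Theta}(Y,L)\le\tau_{\alpha\otimes\Theta'}(Y,L)+1$. To produce the required map I would fix $\lambda\in H^1(\mathbb{T}^{|L|-1})$ with $\langle\lambda,x\rangle=1$ and use the Cartan identity $\omega=\lambda\wedge\iota_x(\omega)+\iota_x(\lambda\wedge\omega)$ to split off the ``$\lambda$--direction'' of $\Theta$, reducing the essential point to the statement that wedging with a degree--one class raises $\tau$ by at most one. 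That statement I would realize geometrically, following the proof of Proposition~\ref{prop:monotone2}: represent $\lambda$ by the non-separating $2$--sphere $\mathcal S$ of the corresponding $\SoneStwo$--summand; since $\kappa(L)$ runs once through the associated $1$--handle it meets $\mathcal S$ essentially twice, so a single band makes it disjoint from $\mathcal S$, after which a $4$--dimensional $3$--handle composed with a $1$--handle (whose Floer maps are the ones computed in \cite[Section 4.3]{HolDiskFour}) effects $\theta_-\mapsto\theta_+$ on that factor; undoing the band produces a genus--one self--cobordism of $\kappa(L)$ with vanishing homological contribution. I expect the main difficulty to be organizing this reduction so that it applies to an \emph{arbitrary} (not merely decomposable) $\Theta$, and carefully checking both the geometric intersection count with $\mathcal S$ and the vanishing of $\langle c_1(\spinct),[\Sigma]\rangle+[\Sigma]^2$ for the resulting cobordism. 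Only decomposable classes occur along the contraction chain used next, where these checks are straightforward.

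To deduce the ``in particular'' assertion, write $\Theta_{bot}$ for the generator of $\Lambda^0$ and $\Theta_{top}$ for that of $\Lambda^{|L|-1}$ in $\Lambda^* H^1(\mathbb{T}^{|L|-1})$. For any nonzero $\Theta$, repeatedly contracting by a class $x$ with $\iota_x\Theta\ne 0$ strictly lowers the top nonzero exterior degree and so terminates at $\Theta_{bot}$; applying the lower bound at each step gives $\taubot(L)\le\tau_\Theta(L)$. Contracting $\Theta_{top}$ by the coordinate covectors one at a time reaches $\Theta_{bot}$ in exactly $|L|-1$ steps, and applying the upper bound along this chain gives $\tautop(L)\le\taubot(L)+|L|-1$. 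Finally, expanding an arbitrary nonzero $\Theta$ as an $\F$--linear combination of standard monomials $\Theta_j$, each $\Theta_j$ is obtained from $\Theta_{top}$ by contracting out the complementary covectors, so the lower bound gives $\tau_{\Theta_j}(L)\le\tautop(L)$, and subadditivity (Proposition~\ref{prop:sum}) promotes this to $\tau_\Theta(L)\le\tautop(L)$ for all $\Theta$. Together these yield $\taubot(L)\le\tau_\Theta(L)\le\tautop(L)\le\taubot(L)+|L|-1$.
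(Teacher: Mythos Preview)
Your proposal is correct and follows essentially the same approach as the paper. Both arguments use Proposition~\ref{prop:H1} for the left-hand inequality, the geometric cobordism built from a $3$-handle/$1$-handle pair and two bands (exactly as in Proposition~\ref{prop:monotone2}, exploiting that $\kappa(L)$ meets each essential sphere in at most two points) for the right-hand inequality, and then iterate along a chain of contractions together with subadditivity (Proposition~\ref{prop:sum}) to obtain the $\taubot\le\tau_\Theta\le\tautop\le\taubot+|L|-1$ chain.

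The one noteworthy difference is that you are more explicit than the paper about the potential difficulty in establishing the upper bound $\tau_{\alpha\otimes\Theta}\le\tau_{\alpha\otimes\Theta'}+1$ for an \emph{arbitrary} (non-decomposable) $\Theta$: the cobordism of Proposition~\ref{prop:monotone2} realizes ``wedge with $\lambda_i$'' rather than an inverse to $\iota_{e_i}$, so it sends $\alpha\otimes\Theta'$ to $\alpha\otimes(\lambda_i\wedge\Theta')$, which need not equal $\alpha\otimes\Theta$. Your Cartan-identity bookkeeping flags this, and you correctly observe that the issue is moot for the ``in particular'' clause because the contraction chain from $\Theta_{top}$ to $\Theta_{bot}$ passes only through decomposable tensors. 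The paper handles this the same way, first establishing the second displayed line for decomposable $\Theta$ and then extending to general $\Theta$ via Proposition~\ref{prop:sum} and the fact that every nonzero class maps to $\Theta_{bot}$ under iterated $H_1/\mathrm{Tor}$ action; it is simply terser about the intermediate step.
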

\begin{proof}  The $H_1/\mathrm{Tor}$ action on Floer homology is natural with respect to connected sums in the following sense. If ${\gamma_1\cup\gamma_2}$ is represented by a curve which decomposes as a union along a connected sum of Heegaard diagrams,  then under the resulting isomorphism of complexes  $$\widehat{CF}(Y_1\#Y_2,\mathfrak s_1\#\mathfrak s_2)\cong
\widehat{CF}(Y_1,\mathfrak s_1)\otimes\widehat{CF}(Y_2,\mathfrak
s_2)$$ provided by the  K{\"u}nneth  theorem,  the action of ${\gamma_1\cup\gamma_2}$ is given by
\[a_{\gamma_1\cup\gamma_2}=a_{\gamma_1}\otimes
\mathrm{id}+\mathrm{id}\otimes a_{\gamma_2}.\]
  This can be used to show that the identification of $\HFa(\#^{|L|-1}\SoneStwo)$ with $H^*(\mathbb{T}^{|L|-1})$ intertwines the $H_1(\#^{|L|-1}\SoneStwo)/\mathrm{Tor}$ action with the action by interior product, and that $x\in H_1(\mathbb{T}^{|L|-1})$ acts on $\alpha\otimes \Theta$ by $(a_x)_*(\alpha\otimes \Theta)=\mathrm{Id}(\alpha)\otimes \iota_x(\Theta)=\alpha\otimes \Theta'$.  The left-hand inequality in the first part now follows immediately from Proposition \ref{prop:H1}.
  
To show $\tau_{\alpha\otimes \Theta}(Y,L)\le \tau_{\alpha\otimes\Theta'}(Y,L)+1$, we observe that a knotified link intersects each essential 2-sphere created by the 1-handle attachments in at most two points (arising from where the band passes through the handles).  We then appeal to Proposition \ref{prop:monotone2}.   Iterating the inequalities in the first line and using the fact that $\Theta_{top}$ generates $\HFa(\#^{|L|-1}\SoneStwo)$ as a module with respect to the $H_1/\mathrm{Tor}$ action yields the inequalities for links in $S^3$ stated in the second line in the case that $\Theta=\theta_{\epsilon_1}\otimes \ldots\otimes \theta_{\epsilon_{|L|-1}}$.  The  inequality for general $\Theta$ follows from Proposition \ref{prop:sum}, and the fact that any non-zero class maps to $\Theta_{bot}$ under iteration of  the $H_1/\mathrm{Tor}$ action.
  \end{proof}

In  \cite[Theorem 1.1]{FourBall}, \ons\ establish a general bound for the genera of surfaces in negative definite $4$-manifolds bounded by a knot in the $3$-sphere. Armed with the relative adjunction inequality, we can easily extend their result to our invariants for links.  This is the content of Theorem \ref{thm:links}\ref{thm:links-negdef}:
\begin{theorem}[Definite $4$-manifold bound]\label{thm:linkgenus}
Let $W$ be a smooth, oriented 4-manifold with $b_2^+(W)=b_1(W)=0$, and $\partial W=S^3$. If $\Sigma$ is any smoothly embedded surface in $W$ with boundary a link $L\subset S^3$, then  
$$2\tau_\Theta(L)+[\Sigma]^2+|[\Sigma]|\leq  |L|-\chi(\Sigma),$$
where $|[\Sigma]|$ is the $L_1$-norm of the homology class $[\Sigma]\in H_2(W,\partial W)\cong H_2(W)$.
\end{theorem}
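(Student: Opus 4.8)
The plan is to run \ons's strategy from \cite[Theorem 1.1]{FourBall}, feeding the relative adjunction inequality (Theorem \ref{thm:relativeadjunction}) into it in place of the closed-surface adjunction inequality. First I reduce to $\Theta=\Theta_{top}$: Proposition \ref{prop:Monotonicity} gives $\tau_\Theta(L)\le\tautop(L)$ for every non-zero $\Theta$, while $[\Sigma]^2+|[\Sigma]|$ does not involve $\Theta$, so it suffices to prove $2\tautop(L)+[\Sigma]^2+|[\Sigma]|\le|L|-\chi(\Sigma)$. As is standard for such bounds I also take $\Sigma$ connected.

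Next I build the cobordism. Delete an open ball from the interior of $W$ and attach $|L|-1$ four-dimensional $1$-handles near $\partial W=S^3$; the result $W'\cong(W\setminus B^4)\,\natural\,(\natural^{|L|-1}S^1\times B^3)$ is a cobordism from $S^3$ to $\#^{|L|-1}\SoneStwo$ with the same (negative-definite) intersection form as $W$. Tubing $\Sigma$ to the new $S^3$-boundary and then attaching the $|L|-1$ bands through the $1$-handles that effect the knotification of $L$ produces a connected, properly embedded surface $\Sigma'\subset W'$ with $\partial\Sigma'=-U\sqcup\kappa(L)$, $U$ an unknot in the incoming $S^3$, and with $\chi(\Sigma')=\chi(\Sigma)-|L|$, hence $2g(\Sigma')=|L|-\chi(\Sigma)$. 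Capping $S^3$ with $B^4$ and invoking Donaldson's theorem, write $H_2(W)\cong\Z^n$ in a basis $e_1,\ldots,e_n$ with $e_i\cdot e_i=-1$ and $[\Sigma]=\sum a_ie_i$. I then choose the $\SpinC$ structure $\spinct$ on $W'$ that restricts to the $1$-handle summand as the torsion $\SpinC$ structure and whose first Chern class on $W\setminus B^4$ is the characteristic vector $v$ with $\langle v,e_i\rangle=-\operatorname{sgn}(a_i)$ (ties broken arbitrarily); then $c_1(\spinct)^2=-b_2(W)$ and $\langle c_1(\spinct),[\Sigma]\rangle=\sum|a_i|=|[\Sigma]|$.

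Now I verify the hypothesis $F_{W',\spinct}(\alpha)=\beta\neq0$ of Theorem \ref{thm:relativeadjunction}, $\alpha$ the generator of $\HFhat(S^3)$. By the composition law for cobordism maps (equivalently, the product formula for arc sums, Theorem \ref{thm:Kun}), as in the proof of Proposition \ref{prop:SlicegenusBounds-links}, $F_{W',\spinct}\colon\HFhat(S^3)\to\HFhat(\#^{|L|-1}\SoneStwo)$ is the composition of $F_{W\setminus B^4,\,\spinct|_{W\setminus B^4}}\colon\HFhat(S^3)\to\HFhat(S^3)$ with the $(|L|-1)$-fold $1$-handle map, and the latter carries the generator to $\Theta_{top}=\theta_+\otimes\ldots\otimes\theta_+$. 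Since $W\setminus B^4$ is a negative-definite cobordism with $b_1=0$ between copies of $S^3$ and $c_1(\spinct|_{W\setminus B^4})^2=-b_2(W)$, the map $F_{W\setminus B^4,\,\spinct|_{W\setminus B^4}}$ is an isomorphism of $\HFhat(S^3)$ — this is exactly the computation underpinning \cite[Theorem 1.1]{FourBall} (see also \cite{AbsGrad}), and it is precisely here that the factorization earns its keep, isolating the $b_1=0$ piece of $W'$ from the $1$-handles. Hence $F_{W',\spinct}(\alpha)=\Theta_{top}\neq0$.

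Finally I feed this into Theorem \ref{thm:relativeadjunction} applied to $(W',\spinct)$, the surface $\Sigma'$, and the knots $U\subset S^3$ and $\kappa(L)\subset\#^{|L|-1}\SoneStwo$. Using $\tau_\alpha(S^3,U)=\tau(U)=0$ and $\tau_{\Theta_{top}}(\#^{|L|-1}\SoneStwo,\kappa(L))=\tautop(L)$, and choosing Seifert surfaces $S_1,S_2$ for $U$ and $\kappa(L)$, the inequality becomes
\[
\langle c_1(\spinct),[\Sigma'_{S_1,S_2}]\rangle+[\Sigma'_{S_1,S_2}]^2+2\,\tautop(L)\le 2g(\Sigma')=|L|-\chi(\Sigma).
\]
The step I expect to require the most care is the remaining homological bookkeeping: a Mayer--Vietoris/excision argument should identify $[\Sigma'_{S_1,S_2}]$ with $[\Sigma]$ under the canonical isomorphism $H_2(W')\cong H_2(W)$, compatibly with self-intersection numbers, because the caps $S_1,S_2$ lie in collars and the tube and bands are null-homologous. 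Granting this, the left-hand side equals $|[\Sigma]|+[\Sigma]^2+2\tautop(L)$, and combining with $\tau_\Theta(L)\le\tautop(L)$ completes the proof.
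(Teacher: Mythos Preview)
Your proof is correct and follows essentially the same strategy as the paper: build a cobordism from $S^3$ to $\#^{|L|-1}\SoneStwo$ by puncturing $W$ and attaching handles, tube and band $\Sigma$ to a cobordism from the unknot to $\kappa(L)$, invoke the nontriviality of the map on $\HFhat$ coming from the negative-definite piece (via \cite[Lemma 3.4]{FourBall}, which packages the Donaldson diagonalization you spell out), and apply Theorem \ref{thm:relativeadjunction}. The only real difference is organizational: you reduce to $\Theta=\Theta_{top}$ at the outset using monotonicity (Proposition \ref{prop:Monotonicity}) and then only attach $1$-handles, whereas the paper proves the inequality for every decomposable $\Theta$ by using the mixed manifolds $W_\Theta$ and then extends to arbitrary $\Theta$ via subadditivity (Proposition \ref{prop:sum}); both routes are valid and yield the same statement.
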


\begin{proof} Suppose $\Sigma\subset W$ is a properly embedded, oriented surface with boundary $L\subset \partial W=S^3$. In addition, suppose $\Sigma_\Theta$ is the cobordism in $W_{\Theta}-B^4$ from $L$ to $\kappa(L)$ obtained by attaching $|L|-1$ bands to $L$. Gluing $W$ to $W_{\Theta}-B^4$ along $S^3$, we form a 4-manifold $\widehat{W}=W\cup_{S^3} (W_{\Theta}-B^4)$  containing the surface $\Sigma \cup \Sigma_\Theta$ with boundary $\kappa(L)$.

Now, fix a $\SpinC$ structure $\spinct$ on $W$ satisfying $c_1(\spinct)=-b_2(W)$ and $\langle c_1(\spinct),[\Sigma]\rangle =|[\Sigma]|$ and observe that any generator $\Theta=\theta_{\epsilon_1}\otimes \ldots\otimes \theta_{\epsilon_{|L|-1}}$ is in the image of \[F_{\widehat{W}-B^4, \spinct\#\spinct_0}=F_{W_\Theta-B^4,\spinct_0}\circ F_{W-B^4,\spinct}.\] This follows from Lemma 3.4 of \cite{FourBall}, which implies that for such $\spinct$ the map $F_{W-B^4,\spinct}$ is nontrivial, together with the argument made in the proof of Corollary \ref{cor:S1S2genusbound}, which implies that $\Theta$ is in the image of $F_{W_{\Theta}-B^4, \spinct_0}$.  

Tubing $\Sigma\cup\Sigma_\Theta$ to the $S^3$ boundary component of $\widehat{W}-B^4$ gives a cobordism from the unknot $U$ to $\kappa(L)$ and applying Theorem \ref{thm:relativeadjunction} we obtain: $$\langle c_1(\spinct\#\spinct_0),[\Sigma\cup\Sigma_\Theta]\rangle+[\Sigma\cup\Sigma_\Theta]^2+2(\tau_\Theta(\kappa(L))-\tau(U))\leq 2g(\Sigma\cup \Sigma_\Theta)= 1-\chi(\Sigma\cup\Sigma_\Theta).$$
Since $\tau(U)=0$, $c_1(\spinct_0)=0$ and $[\Sigma_\Theta]^2=0$, the left side simplifies as 
\begin{align*}
|[\Sigma]|+[\Sigma]^2 +2\tau_{\Theta}(\kappa(L)).
\end{align*}
At the same time, $\chi(\Sigma\cup\Sigma_\Theta)=\chi(\Sigma)-|L|+1.$ 
Thus, $1-\chi(\Sigma\cup\Sigma_\Theta)=|L|-\chi(\Sigma).$ The result follows at once for the generating decomposable tensors, and extends to arbitrary $\Theta$ using subadditivity, Proposition \ref{prop:sum}, as before.
\end{proof}

There are several classes of links where general structural theorems hold for our $\tau$ invariants.  For instance, we can establish Theorem \ref{thm:links}\ref{thm:links-alternating}, which says that the invariants for alternating links, like alternating knots, are determined by their signature: 

\begin{theorem}[Alternating links] \label{thm:altLinks}
Suppose $L\subset S^3$ is an alternating link of $|L|$ components, and $\Theta\in \HFa(\#^{|L|-1}S^1\times S^2)$ is a class with grading $k$.   Then $\tau_\Theta(L)= k-\frac{\sigma}{2}$, where $\sigma(L)$ is the signature. In particular, $\tautop(L)= \frac{|L|-\sigma(L)-1}{2}$ and $\taubot(L)=\frac{-|L|-\sigma(L)+1}{2}$
\end{theorem}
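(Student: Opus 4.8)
The plan is to reduce the computation of $\tau_\Theta(L)$ for alternating links to what is already known for alternating knots, by using the knotification $\kappa(L)$ and exploiting the fact that knotifications of alternating links are themselves ``alternating'' in an appropriate sense. The starting point is \os's theorem that knot Floer homology of an alternating knot $K\subset S^3$ is determined by the Alexander polynomial and signature, supported on the diagonal $\HFKa(K,s)$ concentrated in Maslov grading $s+\frac{\sigma}{2}$; consequently $\tau(K)=-\frac{\sigma(K)}{2}$. I would invoke the analogous statement for knotified links, which follows from the fact (due to \os, see \cite{Knots}) that if $L$ is alternating then $\widehat{HFK}$ of $\kappa(L)\subset \#^{|L|-1}S^1\times S^2$ is ``thin'': it is supported in a single $\delta$-grading, determined by the signature of $L$, with the appropriate shift coming from the $\HFa(\#^{|L|-1}S^1\times S^2)$ factor living in gradings $\frac{1-|L|}{2}$ up to $\frac{|L|-1}{2}$.

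First I would recall, for a doubly-pointed Heegaard diagram of $(\#^{|L|-1}S^1\times S^2, \kappa(L))$, that the associated graded of the Alexander filtration on $\CFa$ computes $\widehat{HFK}$, and that thinness forces the filtered chain homotopy type to be a direct sum of a single ``staircase/box'' summand carrying the generators of $\HFa(\#^{|L|-1}S^1\times S^2)$ together with acyclic ``box'' summands. For a thin complex, each surviving generator of $\HFa$ sits at a definite Alexander grading determined by its Maslov grading and the $\delta$-grading: if $\Theta$ has Maslov grading $k$ and the $\delta$-grading of the complex is $\delta_0$, then the Alexander grading of the cycle representing $\alpha\otimes\Theta$ (here $\alpha$ is the generator of $\HFa(S^3)$) is $M - \delta_0 = k - \delta_0$. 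Pinning down $\delta_0$ in terms of $\sigma(L)$ is the crux: one computes it from the Euler characteristic (the multivariable-to-single-variable Alexander polynomial of $\kappa(L)$ is, up to units, the one-variable Alexander polynomial of $L$ times $(t^{1/2}-t^{-1/2})^{|L|-1}$ or the Conway-normalized version thereof) together with the signature, exactly as in the knot case; the symmetrization convention forces the top generator $\Theta_{top}$ to have $\tau_{\Theta_{top}}(L) = \frac{|L|-1}{2}-\frac{\sigma(L)}{2}$ once one checks the grading of $\Theta_{top}$ within $H^*(\mathbb{T}^{|L|-1})$.

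Concretely the steps are: (1) State the thinness result for $\widehat{HFK}(\kappa(L))$ for alternating $L$, citing \os; (2) deduce the filtered homotopy type is a sum of one distinguished summand plus acyclic boxes, so that every nonzero $\alpha\otimes\Theta\in\HFa$ has a unique Alexander grading equal to $M(\alpha\otimes\Theta)-\delta_0$; (3) identify $\delta_0$ via Euler characteristic plus signature, equivalently by direct comparison with the alternating-knot case applied to, say, a band-sum presentation, or by the behavior of $\tau$ and knot Floer homology under the connected-sum/knotification operations relating $\kappa(L)$ to alternating knots; (4) determine the grading $k$ of a given $\Theta$ in $H^*(\mathbb{T}^{|L|-1})$, recalling the convention that the bottom summand lives in degree $\frac{1-|L|}{2}$, to conclude $\tau_\Theta(L)=k-\frac{\sigma}{2}$; (5) specialize $\Theta=\Theta_{top}$ (grading $\frac{|L|-1}{2}$) and $\Theta=\Theta_{bot}$ (grading $\frac{1-|L|}{2}$) to read off the displayed formulas. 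I expect step (3) — fixing the overall grading shift $\delta_0$ and reconciling it with the signature and the $\HFa(\#^{|L|-1}S^1\times S^2)$ grading conventions (there are several conventions in the literature, cf.\ Remark~\ref{rmk:dependence}) — to be the main obstacle; everything else is formal once thinness is in hand. An alternative route to (3), which sidesteps sign bookkeeping, is to use Proposition~\ref{prop:Additivity} together with the computation of $\tau$ for alternating knots: for a connected sum of alternating knots the answer is additive in the signatures, and one can realize enough of the relevant gradings by such connected sums combined with the monotonicity Proposition~\ref{prop:Monotonicity} to force the linear formula $\tau_\Theta(L)=k-\frac{\sigma}{2}$ on all of $H^*(\mathbb{T}^{|L|-1})$.
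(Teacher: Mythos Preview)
Your approach is essentially the same as the paper's: both invoke \os's thinness result for the knot Floer homology of alternating links (meaning, by definition, the knot Floer homology of their knotifications) and read off $\tau_\Theta$ from the forced relation between Maslov and Alexander gradings. The paper's proof is just a few lines, citing \cite[Theorem 4.1]{AltKnots} directly.

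The one place you overcomplicate is step (3). You flag identifying $\delta_0$ as ``the main obstacle'' and propose computing it from the Alexander polynomial, or via an additivity/monotonicity argument. In fact the \os\ theorem you want to cite already asserts that for an alternating link $L$, the group $\HFKa(\kappa(L),s)$ is supported in Maslov grading $s+\frac{\sigma(L)}{2}$; the constant $\delta_0=\frac{\sigma}{2}$ is part of the statement, not something to be derived separately. Once that is in hand, the argument is immediate: any homogeneous cycle in $\CFa(\#^{|L|-1}S^1\times S^2)$ of Maslov grading $k$ is a combination of generators all lying in Alexander grading $k-\frac{\sigma}{2}$, so $\tau_\Theta(L)=k-\frac{\sigma}{2}$. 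Your step (2) decomposition into a staircase plus acyclic boxes is also more structure than you need; the grading constraint alone suffices. The specialization to $\Theta_{top}$ and $\Theta_{bot}$ is exactly as you describe.
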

\begin{proof}
This is a straightforward consequence of \cite[Theorem 4.1]{AltKnots}.  That theorem indicates the knot Floer homology of alternating links which, by definition, is the knot Floer homology of their knotification, is determined by the Alexander polynomial and signature in much the same manner as the better known result from loc. cit. regarding knots.  In particular, the knot Floer homology of an alternating link is ``thin", with the Alexander grading $s$ group supported entirely in Maslov grading $s+\frac{\sigma}{2}$.  It follows that, given a Floer homology class $\Theta\in \HFa(\#^{|L|-1}S^1\times S^2)$ of Maslov grading $k=s+\frac{\sigma}{2}$, the only place in the $\Z$-filtered  homotopy type of the complex corresponding to $\kappa(L)$ where such a class can arise is in Alexander grading $s=k- \frac{\sigma}{2}$.  For the statement about $\tautop$ and $\taubot$, we observe that the highest and lowest Maslov gradings supporting non-trivial Floer groups for  $\HFa(\#^{|L|-1}S^1\times S^2)$ occur at $\frac{|L|-1}{2}$ and $\frac{1-|L|}{2}$, respectively.
\end{proof}

The 4-genus bound provided by $\tau$ is known to be sharp for quasipositive knots \cite{Olga2004}.  We can show, more generally, that the 4-genus bound provided by $\tau_{top}$ is also sharp for quasipositive links. Recall, then, that a \emph{quasipositive link} is the closure of a braid of the form  \[\beta=\displaystyle\prod_{k=1}^mw_k\sigma_{i_k}w_k^{-1},\] where $w_k$ is a braid word in the $n$-strand braid group $B_n$ and $\sigma_{i_k}$ denotes a standard generator.  Rudolph introduced this notion in \cite{Rudolph1983}, where he showed that such links arise as the intersection of a plane algebraic curve with the boundary of the bidisk $D^2\times D^2\subset \C^2$.  In \cite{Rudolph1983-2}, Rudolph explains how to push these algebraic curves into the boundary $3$-sphere to a positively braided ribbon surface.   A link is called \emph{strongly quasipositive} if it bounds a positively braided ribbon surface which is embedded, i.e. is a Seifert surface. 
\begin{prop}\label{prop:quasipositive}
If $L$ is a quasipositive link in $S^3$ then $\tautop(L)=g_4(L).$ Moreover, if $L$ is strongly quasipositive then $\tautop(L)=g_4(L)=g_3(L).$
\end{prop}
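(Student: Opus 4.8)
The plan is to reduce the proposition to the single inequality $\tautop(L)\ge q(L)$, where $q(L):=\tfrac12\bigl(|L|-\chi(F)\bigr)$ and $F$ is Rudolph's quasipositive Bennequin surface. Writing $L=\hat\beta$ for a quasipositive braid $\beta=\prod_{k=1}^{m}w_k\sigma_{i_k}w_k^{-1}$ on $n$ strands, the surface $F$ is built from $n$ disks and $m$ bands, so $\partial F=L$ and $\chi(F)=n-m$; it is embedded in $S^3$ when $\beta$ is strongly quasipositive and pushes into an embedded surface in $B^4$ in general \cite{Rudolph1983,Rudolph1983-2}. The easy half is immediate: the slice-genus bound of Proposition \ref{prop:SlicegenusBounds-links} gives $\tautop(L)\le g_4(L)$, while $F$ gives $g_4(L)\le q(L)$, and when $L$ is strongly quasipositive also $g_3(L)\le q(L)$ (with $g_4(L)\le g_3(L)$ trivially). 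So once $\tautop(L)\ge q(L)$ is known, the chain $g_3(L)\le q(L)=\tautop(L)\le g_4(L)\le g_3(L)$ collapses in the strongly quasipositive case, and $\tautop(L)=g_4(L)=q(L)$ in general, with the complex curve bounded by $L$ realizing the minimum.

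To obtain $\tautop(L)\ge q(L)$, I would invoke Rudolph's structural result that $F$ is an incompressible subsurface of the fiber surface $F'$ of a positive torus link $T$ (or a boundary connected sum of such), with $F'\setminus\mathrm{int}(F)$ a disjoint union of $2$-disks \cite{Rudolph1983-2}. Adjoining these disks to $F$ one band at a time yields a coherently oriented band cobordism $C\colon L\to T$ in $S^3\times[0,1]$ with $\chi(C)=\chi(F')-\chi(F)$. Using \ons's knotification together with the Hedden--Kuzbary surgery of cobordisms between knotifications \cite{Hedden-Kuzbary}, I would promote $C$ to a cobordism $\widehat C\colon\kappa(L)\to\kappa(T)$ inside a $4$-manifold $X$ assembled from $S^3\times I$ using only $1$- and $3$-handles, so that $H_2(X)=0$ — which kills the $c_1$ and self-intersection terms — and so that in the relevant $\SpinC$ structure the induced map on $\HFa$ carries $\Theta_{top}$ to $\Theta_{top}$ (the $1$- and $3$-handle maps of \cite{HolDiskFour} do exactly this to the top generator). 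Applying Theorem \ref{thm:relativeadjunction} with $\kappa(L)$ on the incoming end gives
\[
\tautop(L)\ \ge\ \tautop(T)-g\bigl(\widehat C\bigr).
\]
It then remains to compute $\tautop(T)=q(T)$ for the positive torus link $T$ — which follows from the explicit form of $\CFKinf$ of torus knots and links coming from their lens-space surgeries \cite{Knots} — and to carry out the Euler-characteristic bookkeeping relating $\chi(F')$, $\chi(F)$, the number of complementary disks, the component counts of $L$ and $T$, and $g(\widehat C)$, so as to verify $\tautop(T)-g(\widehat C)=q(L)$.

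The hard part will be precisely this last step: constructing $\widehat C$ so that the auxiliary handles contribute nothing to $H_2(X)$ or to $c_1$, and bookkeeping exactly how the knotification bands for $L$ and for $T$ thread through $C$ — i.e.\ ensuring that positivity enters only through the estimate $\tautop(T)\ge q(T)$ and not through a stray homological term that would weaken the bound. As an alternative, more self-contained route I would bypass torus links and induct on the number $m$ of positive bands in a strongly quasipositive presentation, with base case $L=U_n$ (where $\tautop=0=q$). Adjoining a positive band is a coherently oriented saddle: when it merges two components one checks $q$ is unchanged and $\tautop$ is unchanged as well (the knotification cobordism is genus zero), while when it splits a component $q$ increases by $1$, the upper bound $\tautop\le q$ is automatic from the enlarged quasipositive surface, and the matching lower bound — again exactly where positivity is indispensable — is propagated from the simplest split (a positive Hopf band) via Propositions \ref{prop:sum}, \ref{prop:H1} and \ref{prop:monotone2}. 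Either way, the crux is extracting the sharp lower bound on $\tautop$ from positivity; the upper bound is a formal consequence of the slice-genus inequality already in hand.
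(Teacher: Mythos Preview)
Your strategy is genuinely different from the paper's. The paper does not build cobordisms to torus links or induct on bands; instead it imports a $\tau$-Bennequin inequality from the companion paper \cite{4Dtight},
\[
\tb(\mathcal{L})+\rot(\mathcal{L})+|L|-1\le 2\tau_{\xi_{std}}(\kappa(L))-1,
\]
identifies $\tau_{\xi_{std}}(\kappa(L))$ with $\tautop(L)$ via Proposition~\ref{prop:top/std}, and then exhibits, for a quasipositive $b$-braid $\prod_{k=1}^m w_k\sigma_{i_k}w_k^{-1}$, an explicit Legendrian front with $\tb+\rot=m-b$. Since Rudolph's ribbon surface has $\chi=b-m$, the two ends of the chain $\tb+\rot+|L|-1\le 2\tautop(L)-1\le |L|-\chi(\Sigma)-1$ coincide. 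Positivity enters exactly once, through the Legendrian realization: only the negative generators require stabilization, so the positive word keeps $\tb+\rot$ high.

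Your first route has a concrete gap in the step you flag as ``hard.'' If you knotify $L$ and $T$ independently and splice the bands onto $C$, the resulting $\widehat{C}$ has $\chi(\widehat{C})=\chi(C)-(|L|-1)-(|T|-1)$, giving $g(\widehat{C})=\tfrac12(k+|L|+|T|-2)$, and then $\tautop(T)-g(\widehat{C})=q(L)-(|L|-1)$: the bound is off by $|L|-1$. To recover the sharp bound you must reuse the $|L|-1$ knotification bands of $L$ as knotification bands of $T$ after tracing them through $C$; this forces both $|T|\ge|L|$ (so that only $1$-handles are needed, keeping $\Theta_{top}\mapsto\Theta_{top}$---a $3$-handle would annihilate $\Theta_{top}$) and a compatibility condition on which components of $T$ those traced bands connect. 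Neither is automatic from Rudolph's embedding, and you would still owe an independent computation of $\tautop(T)=q(T)$ for whatever positive link you land on. Your inductive alternative has a sharper problem: for a splitting band you need $\tautop(L_1)\ge\tautop(L_0)+1$, but Theorem~\ref{thm:relativeadjunction} and Propositions~\ref{prop:sum}, \ref{prop:H1}, \ref{prop:monotone2} are all insensitive to the sign of the band---a negative band gives the same saddle cobordism---so positivity has nowhere to enter. The paper's Bennequin approach is precisely the device that converts positivity into the needed lower bound without any of this bookkeeping.
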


The proof of Proposition \ref{prop:quasipositive} will take a detour through some contact geometric features of the theory, upon which we now embark.  We will primarily relegate our exploration of the interaction between the relative adjunction inequality and contact geometry to another paper (see \cite{4Dtight}), and here draw on only what we need for studying $\tau_{top}(L)$.   

To begin, we recall from \cite{tbbounds} that one can define an invariant of knots in contact 3-manifolds by $\tau_{\xi}(Y,K):=\tau^*_{c(\xi)}(Y,K)$ where $c(\xi)$ denotes the \os\ contact class associated to $\xi$ \cite[Definition 1.2]{Contact}. We will need the following fact, which equates $\tau_{top}(\#^\ell \SoneStwo, K)$ and $\tau_{\xi_{std}}(\#^\ell\SoneStwo, K)$. 

\begin{prop} \label{prop:top/std} Let $K$ be a knot in $\#^\ell\SoneStwo$. Then
$$\tautop(K)=\tau_{\xi_{std}}(K)=\tau^*_{c(\xi_{std})}(K)$$
where $\xi_{std}$ is the unique tight contact structure on $\#^\ell\SoneStwo$.
\end{prop}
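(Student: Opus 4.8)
The plan is to show that the contact class $c(\xi_{std})\in \HFa^*(-\#^\ell\SoneStwo)$ is, up to the duality pairing, the dual of the top generator $\Theta_{top}$, and then unwind the definitions of $\tau_{top}$ and $\tau^*_{c(\xi_{std})}$. First I would recall that $\#^\ell\SoneStwo$ is obtained by $0$-surgery on an $\ell$-component unlink in $S^3$, and carries a unique Stein fillable (hence tight) contact structure $\xi_{std}$, obtained from the standard contact $S^3$ by Legendrian surgery, or alternatively as the connected sum of $\ell$ copies of the standard tight contact structure on $\SoneStwo$. By the connected sum formula for the contact invariant \cite{Contact} (together with the K\"unneth theorem \cite{HolDiskTwo}), $c(\xi_{std})$ for $\#^\ell\SoneStwo$ is the $\ell$-fold tensor product of the contact classes $c(\xi_{std})$ for each $\SoneStwo$ summand. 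So it suffices to identify $c(\xi_{std})\in \HFa(-\SoneStwo)\cong\HFa(\SoneStwo)$ for a single $\SoneStwo$: this is a well-known computation — the standard tight contact structure on $\SoneStwo$ has contact invariant equal to the bottom generator $\theta_-$ of $\HFa(-\SoneStwo)$ in the appropriate orientation convention (equivalently, it pairs nontrivially with $\theta_+$). Thus $c(\xi_{std})$ for $\#^\ell\SoneStwo$ is a nonzero multiple of $\Theta_{bot}^{\,-}:=\theta_-\otimes\cdots\otimes\theta_-\in \HFa(-\#^\ell\SoneStwo)$, and under the pairing $\langle -,-\rangle$ between $\HFa^*(\#^\ell\SoneStwo)\cong \HFa_*(-\#^\ell\SoneStwo)$ and $\HFa_*(\#^\ell\SoneStwo)$, it is dual to $\Theta_{top}=\theta_+\otimes\cdots\otimes\theta_+$ (which is the unique class, up to scalar, not annihilated by pairing with $\Theta_{bot}^-$, by the Maslov grading constraint and the fact that $\langle\theta_-,\theta_+\rangle\ne 0$ on each factor).

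Next I would compare the two numerical invariants directly. By definition, $\tau^*_{c(\xi_{std})}(K)=\min\{r\in\Q \mid \exists\, \beta\in\Image(I_r) \text{ with } \langle c(\xi_{std}),\beta\rangle\ne 0\}$. Since $c(\xi_{std})$ pairs nontrivially with $\beta\in\HFa(\#^\ell\SoneStwo)$ precisely when the $\Theta_{top}$-component of $\beta$ is nonzero, this becomes $\min\{r \mid \Theta_{top}\in \Image(I_r)\}$ — because the subcomplexes $\Filt_r$ and the maps $I_r$ respect the decomposition coming from the $H_1/\Tor$-module structure, so a cycle whose class has nonzero $\Theta_{top}$-component exists at filtration level $r$ iff $\Theta_{top}$ itself is represented at level $r$. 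But that last quantity is exactly $\tau_{\Theta_{top}}(K)=\tautop(K)$. Hence $\tautop(K)=\tau^*_{c(\xi_{std})}(K)=\tau_{\xi_{std}}(K)$, as claimed. (One may alternatively phrase this via Proposition \ref{prop:dualtau}: $\tau^*_{c(\xi_{std})}(Y,K) = -\tau_{c(\xi_{std})}(-Y,K)$, then observe the minimal-filtration cycle for the class $c(\xi_{std})\sim\Theta_{bot}^-$ in $-\#^\ell\SoneStwo$ corresponds under the duality isomorphism to the maximal-filtration requirement for $\Theta_{top}$ in $\#^\ell\SoneStwo$, using the symmetry of the Alexander filtration.)

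The main obstacle I anticipate is pinning down the precise identification of $c(\xi_{std})$ with the dual of $\Theta_{top}$, including getting all orientation and grading conventions consistent — in particular making sure that the contact invariant of the unique tight structure on $\SoneStwo$ lands in the summand dual to $\theta_+$ rather than $\theta_-$, and that this behaves correctly under the connected-sum formula. This is genuinely a matter of bookkeeping with known results (\cite{Contact}, \cite{HolDisk} Section 9, and the connected-sum behavior of the contact class), but it is the step where a sign or grading slip would invalidate the argument. A secondary, more mechanical point is verifying that the pairing $\langle -,-\rangle$ and the filtration-level maps $I_r$ interact with the tensor decomposition as claimed, so that "pairing nontrivially with $c(\xi_{std})$" really is equivalent to "having nonzero $\Theta_{top}$-component"; this follows from the additivity/naturality already recorded in Proposition \ref{prop:Additivity} and the discussion of the pairing in \cite{tbbounds}, but should be stated carefully.
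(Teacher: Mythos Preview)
Your approach is essentially the paper's: identify $c(\xi_{std})$ as dual to $\Theta_{top}$ via the connected-sum formula for contact invariants, then unwind the definitions of $\tau^*$ and $\tau_{top}$. The paper reduces to $\ell=1$ first and computes $c(\xi_{std})=\theta_+^*$ explicitly using the Whitehead knot example (rather than citing it as well-known), but the structure is the same.

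There is, however, a genuine gap in your justification of the key implication ``$\beta\in\Image(I_r)$ has nonzero $\Theta_{top}$-component $\Rightarrow$ $\Theta_{top}\in\Image(I_r)$.'' You assert that ``the subcomplexes $\Filt_r$ and the maps $I_r$ respect the decomposition coming from the $H_1/\mathrm{Tor}$-module structure.'' But the K\"unneth tensor decomposition lives only on the homology $\HFa(\#^\ell S^1\times S^2)$; the Alexander filtration induced by an \emph{arbitrary} knot $K$ on the chain complex has no reason to respect it, so $\Filt_r$ need not split accordingly. Your appeal to Proposition~\ref{prop:Additivity} is also misplaced: that result concerns connected sums of \emph{knots}, not the interaction of a single knot's filtration with the ambient K\"unneth splitting.

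What actually makes the implication work is that $\Image(I_r)\subset\HFa$ is a Maslov-graded subspace (since the chain-level Alexander filtration is defined on bigraded generators and $\iota_r$ preserves Maslov grading), and $\Theta_{top}$ is the unique class in the top Maslov grading. Hence the top-graded piece of $\beta$---namely $\Theta_{top}$---lies in $\Image(I_r)$. Equivalently, one can argue via Proposition~\ref{prop:H1}: the $H_1$-action is filtered, so $\Image(I_r)$ is an $H_1$-submodule, and one inductively peels off the lower-graded components of $\beta$ to isolate $\Theta_{top}$. This is exactly what the paper does (tersely, for $\ell=1$) when it invokes ``monotonicity'' and then writes $\alpha=\theta_++\alpha'$ with $\alpha'$ in strictly lower Maslov degree. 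You have the right tool in hand---the filtered $H_1$-action---but you need to deploy it as above rather than asserting a splitting of $\Filt_r$ that does not exist.
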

\begin{proof} First observe that the classes $\Theta_{top}$ and $c(\xi_{std})$ are both decomposable tensors in the Floer homology of $\#^\ell S^1\times S^2$, under the identification of the latter as an iterated tensor product provided by  the K{\"u}nneth formula \cite[Theorem 1.4]{HolDiskTwo}.  This is immediate from the discussions above for $\Theta_{top}$. For the contact class it follows from the fact that $\xi_{std}$ is the iterated contact connected sum of the unique tight contact structure on $\SoneStwo$, together with the product formula for $c(\xi)$ under contact connected sums, \cite[Property 4, pg. 105]{tbbounds}.   It therefore suffices to prove the result in the case $\ell=1$.

For this, we first establish that $c(\xi_{std})$ is dual to $\theta_+$. This can be verified in a number of ways; for instance, through the calculation in Example \ref{Whitehead}.  $Wh^+$ is  a fibered knot in $\SoneStwo$, a fact implied by having rank one knot Floer homology in the top Alexander grading \cite{NiFibered} c.f. \cite{Ghiggini2007}. Figure \ref{fig:Agrading} shows the complex $\CFa(S^1\times S^2)$ with filtration induced by $Wh^+$.  The contact invariant of the contact structure associated to the open book coming from $Wh^+$ is, by definition, the element in the  homology of the dual  complex arising via inclusion of the bottommost non-trivial filtered subcomplex.  The dual complex and dual filtration are computed by reversing arrows and negating Alexander gradings, respectively, so in the case at hand $[d+f]^*=\theta_{+}^*\in \HFa^*(\SoneStwo)$ and $c(\xi_{Wh^+})=\theta_{+}^*$ since $\theta_{+}^*$ is in the bottommost filtration level in the dual complex.  Since $c(\xi_{Wh^+})\neq 0$, the contact structure $\xi_{Wh^+}$ is tight \cite[Theorem 1.4]{Contact}, and must therefore be isotopic to $\xi_{std}$, as the latter is the unique tight contact structure on $\SoneStwo$ \cite{Eliash1}. Thus, $c(\xi_{std})=c(\xi_{Wh^+})=\theta_+^*$. 

Now let $K\subset S^1\times S^2$ be any knot. If $\tau_{c(\xi_{std})}^*(K)=n$ then, by definition, there exists a class $\alpha\in\Image(I_n)$ such that $\langle c(\xi_{std}),\alpha\rangle \neq 0$. Moreover, since $\tau_{c(\xi_{std})}^*(K)$ is the minimum filtration index for which there exists such a class, it follows that $\tau_\alpha(K)=n$. Monotonicity now implies $\tau_\alpha(K)\leq \tau_{top}(K)$. On the other hand, since $$\langle c(\xi_{std}),\alpha\rangle=\langle \theta_+^*,\alpha \rangle \neq 0,$$ $\alpha$ decomposes as a sum $\theta_++\alpha'$ for some class $\alpha'$ pairing trivially with $\theta_+^*$. But any such class is a linear combination of classes in the image in the $H_1$ action which, by monotonicity,  are represented in filtration levels less than or equal to the minimum filtration level representing $\theta_+$.  By the subadditivity of $\tau$ (Proposition \ref{prop:sum}) we therefore have \[\tau_{top}(K):=\tau_{\theta_+}(K)=\tau_{\theta_++\alpha'+\alpha'}(K)\le \mathrm{max}\{\tau_{\alpha}(K),\tau_{\alpha'}(K)\}=\tau_{\alpha}(K),\] and hence $\tau_{top}(K)\leq \tau_{\alpha}(K)\le \tau_{top}(K)$.  We conclude $\tau_{top}(K)=\tau_\alpha(K)=n=\tau_{c(\xi_{std})}^*(K)$.
\end{proof}

The proof of Proposition \ref{prop:quasipositive}  relies on a Bennequin type inequality for links proved by the authors in \cite{4Dtight}, which we state here for the special case of links in $S^3$:

\begin{theorem}[$\tau$-Bennequin bound \cite{4Dtight}] Suppose $L\subset S^3$ is a link of $|L|$ components.  Then for any Legendrian representative $\mathcal{L}$ of $L$ in the standard tight contact structure on $S^3$ we have
\begin{equation} \label{eq:sBi}
\tb(\mathcal{L})+\rot(\mathcal{L})+|L|-1\leq 2\tau_{\xi_{std}}(\kappa({L}))-1.
\end{equation}
\end{theorem}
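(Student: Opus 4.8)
The plan is to reduce this link statement to a Bennequin inequality for a single Legendrian knot --- namely the knotification $\kappa(L)$, regarded as a Legendrian knot in the standard (tight, Stein fillable) contact structure $\xi_{std}$ on $\#^{|L|-1}\SoneStwo$ --- and then to derive that knot-level inequality from the relative adjunction inequality together with the naturality of the contact invariant. Observe first that, unwinding definitions, $\tau_{\xi_{std}}(\kappa(L))=\tau^*_{c(\xi_{std})}\big(\#^{|L|-1}\SoneStwo,\kappa(L)\big)$, and that Proposition \ref{prop:top/std} identifies this with $\tautop(\kappa(L))=\tautop(L)$; so the right-hand side of the claimed inequality is an honest invariant of the Legendrian knot $\kappa(L)$.

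The first step would be to promote a Legendrian representative $\mathcal{L}=\mathcal{L}_1\cup\cdots\cup\mathcal{L}_{|L|}$ of $L$ in $(S^3,\xi_{std})$ to a \emph{Legendrian knotification}. Each of the $|L|-1$ bands in \ons's knotification is attached through one of the new $\SoneStwo$ summands; I would realize such a band as a Legendrian band running through a standard contact $1$-handle, obtaining a Legendrian knot $\widetilde{\mathcal{L}}=\kappa(\mathcal{L})\subset(\#^{|L|-1}\SoneStwo,\xi_{std})$ smoothly isotopic to $\kappa(L)$. The crucial bookkeeping is the behavior of the classical invariants: exactly as for Legendrian connected sums, each band should contribute $+1$ to the Thurston--Bennequin invariant and $0$ to the rotation number, so that after all $|L|-1$ bands
\begin{equation*}
\tb(\widetilde{\mathcal{L}})=\tb(\mathcal{L})+|L|-1,\qquad \rot(\widetilde{\mathcal{L}})=\rot(\mathcal{L}).
\end{equation*}

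It then remains to prove the knot-level bound: for any Legendrian knot $\mathcal{K}$ in a contact $3$-manifold $(Y,\xi)$ with $c(\xi)\neq 0$ (which applies to $(\#^{\ell}\SoneStwo,\xi_{std})$, since it is Stein fillable), $\tb(\mathcal{K})+\rot(\mathcal{K})\leq 2\tau_\xi(Y,K)-1$. Here is where Theorem \ref{thm:relativeadjunction}, in its dual $\tau^*$ form, enters. Legendrian surgery along $\mathcal{K}$ produces a Weinstein cobordism $W$ from $(Y,\xi)$ to some $(Y',\xi')$, along which, by \ons's naturality of the contact invariant, the associated map satisfies $F^*_{W,\spinct}(c(\xi'))=c(\xi)\neq 0$. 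Applying the dual relative adjunction inequality to the core disk $D$ of the $2$-handle, capped with a Seifert surface for $K$ (so $g=0$) and with the unknot-bounding disk on the $Y'$ side (which contributes nothing to the homological terms and for which $\tau^*$ vanishes), and inserting the standard Stein identities $\langle c_1(\spinct),[\hat D]\rangle=\rot(\mathcal{K})$ and $[\hat D]^2=\tb(\mathcal{K})-1$, would give the non-sharp bound $\tb(\mathcal{K})+\rot(\mathcal{K})\leq 2\tau_\xi(Y,K)+1$. The sharp constant is then recovered exactly as in the proof of Theorem \ref{thm:relativeadjunction}: applying this to the $d$-fold Legendrian connected sum $\#^d\mathcal{K}$ in $\#^d(Y,\xi)$ --- for which $\tb$ scales with a correction $+(d-1)$, $\rot$ scales, and $\tau_\xi$ scales by additivity of $c(\xi)$ under contact connected sum and Proposition \ref{prop:Additivity} --- and letting $d\to\infty$ improves the $+1$ to $-1$. (This is essentially the route carried out in \cite{4Dtight}, where a Bennequin bound of \cite{tbbounds} can also be fed directly into the cobordism argument.) Combining with the displayed formulas for $\widetilde{\mathcal{L}}$ then yields $\tb(\mathcal{L})+\rot(\mathcal{L})+|L|-1=\tb(\widetilde{\mathcal{L}})+\rot(\widetilde{\mathcal{L}})\leq 2\tau_{\xi_{std}}(\kappa(L))-1$.

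The main obstacle is the first step: making the Legendrian knotification rigorous --- verifying that the connecting bands can be made Legendrian while threading the contact $1$-handles, fixing the orientation conventions so that the Thurston--Bennequin invariant picks up precisely $+1$ per band with the rotation number unchanged, and checking that the resulting Legendrian knot is $\kappa(L)$ up to smooth isotopy, independently of the choices. A secondary point, handled in \cite{4Dtight}, is pinning down the additive constant in the contact slice--Bennequin step for a general contact manifold; as Remark \ref{circle-error} illustrates in a parallel situation, getting such a constant exactly right (rather than off by one) is a genuine subtlety in this circle of ideas, and the connect-sum improvement above is what resolves it here.
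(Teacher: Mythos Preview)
The paper does not prove this theorem; it is quoted from the companion paper \cite{4Dtight} and used as a black box in the proof of Proposition~\ref{prop:quasipositive}. The only hint the present paper gives about the argument is in the introduction: ``we use the relative adjunction inequality in combination with a Bennequin bound from \cite{tbbounds} to prove a slice-Bennequin inequality for contact manifolds with non-vanishing contact invariants.'' So there is no proof here to compare against directly.

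That said, your outline is a coherent and essentially correct strategy, and it is consistent with that hint. Your Step~2 is a rederivation of the main theorem of \cite{tbbounds} via the dual relative adjunction inequality applied to a Legendrian surgery cobordism; the computation checks out, including the connect-sum amplification to pass from $+1$ to $-1$ (noting $\tb(\#^d\mathcal K)=d\,\tb(\mathcal K)+d-1$, $\rot$ additive, and Proposition~\ref{prop:Additivity} together with the product formula for $c(\xi)$). One small point to be careful about in Step~2: to invoke the dual inequality you need $c(\xi')\neq 0$ on the surgered manifold, but this follows immediately from $F^*_{W,\spinct}(c(\xi'))=c(\xi)\neq 0$.

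Your Step~1, the Legendrian knotification, is indeed the nontrivial contact-geometric input, and you are right to flag it. The construction is standard Weinstein $1$-handle attachment (which yields $\xi_{std}$ on each new $\SoneStwo$ summand), and the band through the handle behaves like a Legendrian connected sum, giving the claimed $\tb$ shift of $+1$ per band and additive $\rot$. Making this precise --- in particular verifying that the result is smoothly the knotification and that the classical invariants transform as stated, independently of choices --- is exactly the content handled in \cite{4Dtight}; it is not available in the present paper. So your proposal is a faithful blueprint, but a complete proof requires importing those contact-topological lemmas.
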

\noindent The calculation of $\tau_{top}$ for quasipositive links will now follow quickly.  The strategy, adapted from the case of knots from that in \cite{SQPfiber} was employed independently by Cavallo in \cite[Theorem 1.4]{CavalloBennequin}

\begin{proofof}{\bf Proposition \ref{prop:quasipositive}.} We have $\tau_{\xi_{std}}(\kappa({L}))=\tau_{top}(\kappa({L}))$ from Proposition \ref{prop:top/std}, and the latter is the definition of $\tau_{top}(L)$.  Substituting this in the $\tau$-Bennequin bound, and recalling the adjunction inequality, we obtain:
\[\tb(\mathcal{L})+\rot(\mathcal{L})+|L|-1\leq 2\tau_{top}(L)-1\le |L|-\chi(\Sigma)-1\]
 for any smoothly embedded surface $\Sigma$ with $\partial\Sigma=L$.  For quasipositive links, we demonstrate a Legendrian representative and surface $\Sigma$ for which the outer terms agree,  following the proof of \cite[Theorem 1.5]{SQPfiber}.

Let $\beta$ be a quasipositive braid representative for $L$. Let $n_+$ and $n_-$ denote the number of positive and negative generators, respectively, used in  the braid word and let $b$ be the braid index. Since $\beta$ is of the form $\prod_{k=1}^mw_k\sigma_{i_k} w_k^{-1}$, we have $n_+=n_-+m$. 

To obtain a Legendrian representative  for the closure of $\beta$, stabilize at each negative generator; see \cite[Figure 3]{SQPfiber}. Calculation for this Legendrian representative produces
$$\tb(\mathcal{L})=\{\text{writhe}\}-\#\{\text{left cusps}\}=n_+-2n_--b$$ and $$|\rot(\mathcal{L})|=|\#\{\text{down left cusps}\} - \#\{\text{up right cusps}\}|= n_-$$
Thus $\tb(\mathcal{L})+\rot(\mathcal{L})=n_+-n_--b=m-b$.

On the other hand, the expression of $L$ as the closure of a product of $m$ conjugates  of generators of the $b$ stranded braid group gives rise to a braided ribbon surface bounded by $L$ of Euler characteristic $\chi(\Sigma)=b-m$ \cite[Figure 2.5]{Rudolph1983-2}. It follows that $2\tau_{top}(L)-1= |L|-\chi(\Sigma)-1$, hence $\tau_{top}(L)=\frac{|L|-\chi(\Sigma)}{2}=g_4(L)$. Moreover, if $L$ is strongly quasipositive, then the ribbon surface for $L$ is a Seifert surface, so $\tautop(L)=g_4(L)=g_3(L)$.  
\end{proofof}

Proposition \ref{prop:quasipositive} implies Theorem \ref{thm:links}\ref{thm:milnor} stated in the introduction, by a result of Boileau and Orevkov \cite{BO2001} c.f. \cite{hayden2017quasipositive}.  Their result yields a converse to Rudolph's construction which, with  Rudolph's, equates the set of isotopy classes of links bounding complex curves in the round $4$-ball with the set arising as the closures of quasipositive braids.  From their work, one sees that the Euler characteristic of any complex curve bounded by $L$ is given by $b-m$ for any quasipositive representative.  Theorem \ref{thm:links}\ref{thm:milnor} follows at once.

In the special case of fibered knots, \cite[Theorem 1.2]{SQPfiber} provides a partial converse to Proposition \ref{prop:quasipositive}; namely, if a fibered knot satisfies $\tau(K)=g(K)$, then it is strongly quasipositive.  We extend this result to links, yielding Theorem \ref{thm:links}\ref{thm:links-SQP}:
\begin{theorem}\label{thm:SQP}
Suppose $L\subset S^3$ is fibered. $L$ is strongly quasipositive if and only if $$\tautop(L)=g_4(L)=g_3(L).$$
\end{theorem}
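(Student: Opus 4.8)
The plan is to prove the two directions of the biconditional separately, with the reverse direction being essentially immediate from what we have already established. If $L$ is strongly quasipositive, then Proposition \ref{prop:quasipositive} gives $\tautop(L)=g_4(L)=g_3(L)$ directly, so that direction needs no further work. The content is therefore in the forward direction: assuming $L\subset S^3$ is fibered and $\tautop(L)=g_4(L)=g_3(L)$, we must show $L$ is strongly quasipositive.

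For the forward direction, I would mimic the strategy of \cite[Theorem 1.2]{SQPfiber} for knots, now applied to the knotification $\kappa(L)\subset \#^{|L|-1}\SoneStwo$. The key point is that $L$ fibered means $\kappa(L)$ is a fibered knot in $\#^{|L|-1}\SoneStwo$ (the open book for $L$ in $S^3$ becomes, after the stabilizations implicit in knotification, an open book for $\#^{|L|-1}\SoneStwo$ with binding $\kappa(L)$ and the same fiber surface up to the band moves; in particular the monodromy is obtained from that of $L$). The hypothesis $\tautop(L)=g_3(L)$ says $\tau_{top}(\kappa(L))$ is extremal in the Alexander filtration, i.e. equals the Seifert genus-type quantity $\frac{|L|-\chi(F)}{2}$ where $F$ is the fiber. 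By Proposition \ref{prop:top/std}, $\tautop(\kappa(L))=\tau^*_{c(\xi_{std})}(\kappa(L))$, so this extremality forces the contact invariant $c(\xi_F)$ of the contact structure $\xi_F$ supported by the open book $(\kappa(L),F)$ to be nonzero: indeed, for a fibered knot the bottom filtration level of the dual complex is one-dimensional and realizes the contact class, and $\tau^*_{c(\xi_{std})}$ achieving its minimal possible value is precisely the statement that $c(\xi_{std})$ pairs nontrivially with the bottom filtration level, which (after identifying $c(\xi_{std})$ with $\theta_+^{\otimes(|L|-1)}$-dual via Proposition \ref{prop:top/std}) matches $c(\xi_F)$. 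Hence $\xi_F$ is tight, in fact isotopic to $\xi_{std}$ on $\#^{|L|-1}\SoneStwo$ since that is the unique tight contact structure there.

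Now I invoke the Giroux correspondence together with the characterization of strong quasipositivity via open books. The point is that $L$ is strongly quasipositive precisely when its fiber surface is a Bennequin surface, equivalently when the open book monodromy is a product of positive Dehn twists (a quasipositive, hence right-veering, factorization). By the work on tight contact structures and right-veering monodromy — specifically, Honda-Kazez-Mati\'c and the refinements used in \cite{SQPfiber} — a fibered link whose supported contact structure has nonvanishing Heegaard Floer contact invariant has right-veering monodromy, and for fibered links in $S^3$ one upgrades this: the identification of the fiber with a quasipositive surface follows because the supporting contact structure being the tight $\xi_{std}$ forces the page to embed as a quasipositive braided surface, exactly as in the knot case treated in \cite[Theorem 1.2]{SQPfiber}. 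Transporting that argument from $S^3$ to $\#^{|L|-1}\SoneStwo$ and back down to $L\subset S^3$ via the knotification dictionary (band moves are ribbon moves and do not affect strong quasipositivity of the original Seifert surface) yields that $L$ is strongly quasipositive.

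The main obstacle I anticipate is the last step: carefully checking that the knotification construction interacts correctly with open books and monodromy, so that ``$\kappa(L)$ is fibered with tight supported contact structure'' genuinely translates back to ``the fiber surface of $L$ is a quasipositive Bennequin surface.'' One must verify that the band attachments used to form $\kappa(L)$ correspond, on the level of open books, to Murasugi-sum/stabilization-type operations that preserve the property of the monodromy being a product of positive Dehn twists, and that the resulting contact manifold is the expected connected sum. The Heegaard Floer input (nonvanishing of $c(\xi)$, uniqueness of the tight structure on $\#^{|L|-1}\SoneStwo$, Proposition \ref{prop:top/std}) is already in hand; the delicate part is purely the contact-topological bookkeeping needed to import the proof of \cite[Theorem 1.2]{SQPfiber} to this setting.
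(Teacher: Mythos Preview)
Your overall strategy matches the paper's: one direction is Proposition~\ref{prop:quasipositive}, and for the other you pass to the knotification, use Proposition~\ref{prop:top/std} and duality to show the contact invariant of the open book supported by $\kappa(L)$ is nonzero, then deduce strong quasipositivity via contact-topological input. The Floer-theoretic portion of your argument is essentially what the paper does in its $(4)\Rightarrow(3)$ step.

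The difference lies in how the obstacle you flag is resolved. You propose to work on $\#^{|L|-1}\SoneStwo$, invoke right-veering monodromy via Honda--Kazez--Mati\'c, and then ``transport back'' to $S^3$. The paper sidesteps this entirely with Lemma~\ref{Lemma:contactsum}: knotification of a fibered link corresponds, at the level of open books, to plumbing on untwisted annuli (the fibers of the Hopf-like link $B\subset\SoneStwo$), and Torisu's theorem then gives $\xi_{\kappa(L)}\simeq\xi_L\#\xi_{std}$. The product formula for contact invariants immediately yields $c(\xi_L)\ne 0$ in $\HFa(-S^3)$, so the remainder of the argument takes place on $S^3$ itself: $\xi_L$ is the tight contact structure, Giroux gives stable equivalence of $(F,L)$ to the disk open book, and Rudolph's theorem on Murasugi sums then forces $F$ to be quasipositive. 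No right-veering statement is needed---that would only give a necessary condition, not the positive factorization you want---and no delicate bookkeeping on $\#^{|L|-1}\SoneStwo$ is required. Your anticipated obstacle is real, and the plumbing description of knotification in Lemma~\ref{Lemma:contactsum} is precisely the missing ingredient that dissolves it.
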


\noindent Before  proving this,  we recall some definitions and facts about fibered knots and contact structures. 

Denote by $(F,L)$ the open book decomposition induced by a fibered link $L\subset Y$ with fiber surface $F$. Such an open book decomposition induces a contact structure on the 3-manifold and we write $\xi_L$ for the contact structure induced by $(F,L)$.

\begin{lemma}\label{Lemma:contactsum}
If $L\subset S^3$ is a fibered link, then $\kappa(L)\subset \#^{|L|-1}\SoneStwo$ is also fibered and $\xi_{\kappa(L)}\simeq\xi_{L}\#\xi_{std}$ where $\xi_{std}$ is the unique tight contact structure on $ \#^{|L|-1}\SoneStwo$.
\end{lemma}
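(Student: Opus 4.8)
The plan is to realize the knotification $\kappa(L)$, at the level of open book decompositions, as an iterated ``page $1$-handle attachment'' applied to the open book carried by $L$, and then to read off the supported contact structure from the connected-sum behavior of open books.

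First I would fix the open book $(F,\phi)$ of $S^3$ associated to the fibered link $L$, where $F$ is the fiber surface (a Seifert surface for $L$) and $\phi\colon F\to F$ is the monodromy, fixed on $\partial F$. The key input is the standard fact that, given any open book $(P,\psi)$ for a $3$-manifold $M$ and a properly embedded arc on a page with endpoints on $\partial P$ (possibly on distinct binding components), attaching a $2$-dimensional $1$-handle $h$ to $P$ along neighborhoods of those endpoints and extending $\psi$ over $h$ by the identity produces an open book $(P\cup h,\psi\cup\mathrm{id}_h)$ for $M\#(S^1\times S^2)$. This is exactly the Murasugi sum of $(P,\psi)$ with the standard annular open book of $S^1\times S^2$ (annular page, trivial monodromy), so by Gabai's theorem that Murasugi sums of fiber surfaces are fiber surfaces, the new binding is fibered in $M\#(S^1\times S^2)$; moreover when $h$ joins two binding components they merge. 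I would then argue that the Ozsv\'ath--Szab\'o knotification construction is, for a suitable choice of the $|L|-1$ bands, precisely this operation performed $|L|-1$ times: each $4$-dimensional $1$-handle attached to $S^3\times I$ contributes an $S^1\times S^2$ summand, and the band through it realizes the page $1$-handle $h$ with identity monodromy extension. Since $\kappa(L)$ is well-defined up to diffeomorphism independently of the bands by \cite[Proposition 2.1]{Knots}, it suffices to exhibit one such choice. As the bands connect all $|L|$ components of $L$, after $|L|-1$ steps the binding is a single circle; hence $\kappa(L)$ is a fibered knot in $S^3\#(\#^{|L|-1}S^1\times S^2)\cong \#^{|L|-1}S^1\times S^2$, with page $\widehat{F}=F\cup(|L|-1\ \text{one-handles})$ and monodromy $\widehat{\phi}=\phi\cup\mathrm{id}$.

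For the contact statement I would invoke the theorem (due to Torisu; see also Etnyre's lectures on open books) that the open book underlying a Murasugi sum supports the contact connected sum of the supported contact structures. Applying this to the $|L|-1$ plumbings with the trivial annular open book gives
\[
\xi_{\kappa(L)}=\xi_{(\widehat{F},\widehat{\phi})}\ \simeq\ \xi_{(F,\phi)}\,\#\,\underbrace{\xi_{(A,\mathrm{id})}\#\cdots\#\xi_{(A,\mathrm{id})}}_{|L|-1}\ =\ \xi_L\,\#\,\bigl(\#^{|L|-1}\,\xi_{std}(S^1\times S^2)\bigr).
\]
By Colin's theorem a connected sum of tight contact structures is tight, so $\#^{|L|-1}\xi_{std}(S^1\times S^2)$ is tight, and by Eliashberg's classification it is the unique tight contact structure on $\#^{|L|-1}S^1\times S^2$ --- the $\xi_{std}$ of the statement. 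Hence $\xi_{\kappa(L)}\simeq \xi_L\#\xi_{std}$, using that contact connected sum is well-defined and associative.

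The main obstacle I expect is the identification in the second paragraph: checking that the knotification bands really can be chosen to coincide with page $1$-handle attachments carrying the \emph{identity} monodromy extension, i.e.\ that the obvious banded Seifert surface $\widehat{F}$ for $\kappa(L)$ is genuinely the page of the iterated Murasugi sum and that no spurious Dehn twists are introduced. A useful sanity check here is that the resulting $3$-manifold must be $S^3\#(\#^{|L|-1}S^1\times S^2)$ rather than $S^3$, which forces each plumbing to be with a trivial annulus rather than a Hopf band; together with the flexibility in the choice of bands guaranteed by \cite[Proposition 2.1]{Knots}, this should pin down the monodromy. Everything beyond this is citation.
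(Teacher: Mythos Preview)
Your proposal is correct and follows essentially the same route as the paper: both realize the knotification as an iterated Murasugi sum of the fiber surface $F$ with the trivial annular open book $(A,\mathrm{id})$ of $S^1\times S^2$, then invoke Torisu's theorem to identify the supported contact structure as a connected sum. The only notable difference is in verifying that $(A,\mathrm{id})$ supports the tight contact structure on $S^1\times S^2$: you cite this as a standard fact and appeal to Colin's theorem for the iterated sum, whereas the paper gives an argument internal to the paper by plumbing a positive Hopf band onto the annulus to obtain $Wh^+$ and then using the explicit knot Floer calculation from Example~\ref{Whitehead} to see $c(\xi_{Wh^+})\neq 0$, hence tightness.
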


\begin{proof} Fix a fiber surface $F$ for $L$. Since $F$ is a fiber in a fibration of a connected 3-manifold (the link complement), it is necessarily connected.  Hence we can choose $|L|-1$ disjoint arcs embedded in $F$ with boundary in $\partial F$ so that the union of $\partial F$ with this collection of arcs is connected.  Fix points $p$ and $q$ in $S^2$ and let $B \subset \SoneStwo$ be the fibered 2-component link $S^1\times \{p\}\cup -S^1\times \{q\}$ with fiber surface an untwisted annulus. To a neighborhood of each arc in $F$, plumb a copy of the fiber surface for $B$. The result is an open book decomposition of $\#^{|L|-1}\SoneStwo$ where $\kappa(L)$ is a fibered knot whose fiber surface is $F$ with $|L|-1$ bands attached.  

It follows from \cite[Theorem 1.3]{Torisu} that the contact structure $\xi_{\kappa(L)}$ induced by the open book coming from $\kappa(L)$ is $\xi_L\# (\#^{|L|-1}\xi_B)$. It remains to show that $\xi_B$ is the unique tight contact structure on $S^1\times S^2$.   This is well-known, as the monodromy of the annular open book is the identity.  One can alternatively provide a Floer homological proof.

To this end, recall that the Giroux correspondence implies that the contact structure induced by an open book decomposition is unchanged by plumbing positive Hopf bands. Plumbing a single positive Hopf band to the fiber surface for $B$ yields a surface whose  boundary is the positive Whitehead knot, $Wh^+$.  The calculation of its knot Floer homology in  Example \ref{Whitehead} shows that $c(\xi_{Wh^+})\neq 0$ (as discussed in the proof of Proposition \ref{prop:top/std}),  and this implies that $\xi_{Wh^+}$ is tight. Since $S^1\times S^2$ supports a unique tight contact structure, we conclude that $\xi_{Wh^+}$, and therefore $\xi_{B}$, is isotopic to $\xi_{std}$.
\end{proof} 

Theorem \ref{thm:SQP} is now a consequence of the following proposition.
\begin{prop}
Let $L\subset S^3$ be a fibered link with fiber surface $F$. Then the following are equivalent:
\begin{enumerate}
\item $L$ is strongly quasipositive. 
\item The open book decomposition associated to $(F,L)$ induces the unique tight contact structure on $S^3$. 
\item $c(\xi_L)\neq 0$ where $c(\xi_{L})$ is the Ozsv\'ath-Szab\'o contact invariant of the contact structure induced by the  open book decomposition of $S^3$ associated to $(F,L)$.
\item $L$ satisfies $\tautop(L)=g_3(L).$
\end{enumerate}
\end{prop}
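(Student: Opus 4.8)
The plan is to run the cycle of implications $(1)\Rightarrow(4)\Rightarrow(3)\Rightarrow(2)\Rightarrow(1)$, after first observing that $(2)$ and $(3)$ are equivalent by soft arguments: if $c(\xi_L)\neq 0$ then $\xi_L$ is tight by \cite[Theorem~1.4]{Contact}, hence isotopic to the tight contact structure on $S^3$ by Eliashberg's uniqueness theorem (see \cite{Eliash1}); conversely the tight contact structure on $S^3$ is Stein fillable, so it has non-zero contact invariant. The implication $(1)\Rightarrow(4)$ is immediate from Proposition~\ref{prop:quasipositive}. For $(2)\Rightarrow(1)$ I would appeal to the argument of \cite[Theorem~1.2]{SQPfiber}: since $(F,L)$ and the disk open book both support the tight contact structure on $S^3$, the Giroux correspondence furnishes a common positive stabilization, whose fiber surface is a plumbing of positive Hopf bands and hence a quasipositive Seifert surface; Rudolph's results on quasipositivity and Hopf plumbing then let one de-plumb back down to $F$, showing that $F$ is itself a quasipositive Seifert surface, i.e.\ that $L$ is strongly quasipositive. (The cited theorem is stated for knots, but the argument is insensitive to the number of components.)

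The crux is $(4)\Rightarrow(3)$. First I would pass to the knotification: by Lemma~\ref{Lemma:contactsum} the knot $\kappa(L)\subset \#^{|L|-1}\SoneStwo$ is fibered, its fiber surface $\widehat F$ is a fiber surface $F$ for $L$ with $|L|-1$ bands attached, and $\xi_{\kappa(L)}\simeq \xi_L\#\xi_{std}$, where $\xi_{std}$ is the tight contact structure on $\#^{|L|-1}\SoneStwo$. Since $L$ is fibered, $F$ is norm-minimizing, so $g_3(L)=\tfrac12(|L|-\chi(F))$; an Euler-characteristic count then identifies this number with the Seifert genus $g$ of the fibered knot $\kappa(L)$, equal to $g(\widehat F)$. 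Hence, using the defining identity $\tautop(L)=\tautop(\kappa(L))$ together with hypothesis $(4)$, I obtain $\tautop(\kappa(L))=g$. The key input I would then invoke is that \emph{for any fibered knot $\kappa$ whose fiber has genus $g$, the equality $\tautop(\kappa)=g$ forces $c(\xi_\kappa)\neq 0$}. Granting this, $c(\xi_{\kappa(L)})\neq 0$; feeding $\xi_{\kappa(L)}\simeq\xi_L\#\xi_{std}$ into the connected-sum formula for the contact invariant \cite[Theorem~1.4]{HolDiskTwo} (see also the product formula for $c(\xi)$ in \cite{tbbounds}), and using that $c(\xi_{std})\neq 0$ on each $\SoneStwo$ summand, forces $c(\xi_L)\neq 0$, which is $(3)$.

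To prove the italicized claim I would argue at the chain level. By Ni's fiberedness theorem \cite{NiFibered} (see also \cite{Ghiggini2007}), the knot Floer homology of a fibered knot $\kappa$ vanishes in Alexander gradings above $g$ and is $\F$ in grading $g$, generated by a cycle $x_0$; moreover, by the Ozsv\'ath--Szab\'o description of the contact invariant of a fibered open book in terms of the knot filtration \cite{Contact}, one has $c(\xi_\kappa)\neq 0$ exactly when $[x_0]\neq 0$ in $\widehat{HF}$. If $\tautop(\kappa)=g$ but $[x_0]=\partial w$, then starting from any cycle representing $\Theta_{top}$ that is supported in Alexander gradings $\le g$ (one exists, since all generators have grading $\le g$) and subtracting the appropriate multiple of $\partial w$ to kill its top-grading part, one produces a homologous cycle supported in Alexander gradings $\le g-1$, contradicting $\tautop(\kappa)=g$; hence $[x_0]\neq 0$, as needed. (For the deduction of Theorem~\ref{thm:SQP} afterwards one further notes that $\tautop(L)=g_3(L)\ge 0$ together with $|\tautop(L)|\le g_4(L)\le g_3(L)$ upgrades $(4)$ to $\tautop(L)=g_4(L)=g_3(L)$.)

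I expect the italicized claim to be the main obstacle, as it is the genuine heart of the proposition: for knots in $S^3$ it is essentially \cite[Theorem~1.2]{SQPfiber}, and here one wants its analogue for fibered knots in $\#^\ell\SoneStwo$, which should follow from the generality of Ni's theorem and of the Ozsv\'ath--Szab\'o identification of the contact invariant with the top of the knot filtration. Everything else is bookkeeping --- the equivalence $(2)\Leftrightarrow(3)$, the genus computation, the connected-sum behavior of contact invariants, and the monotonicity and additivity properties already established. A secondary point to treat carefully is that the de-plumbing step in $(2)\Rightarrow(1)$ genuinely uses that plumbing a positive Hopf band both preserves \emph{and} reflects quasipositivity of a Seifert surface.
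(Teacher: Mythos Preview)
Your overall cycle $(1)\Rightarrow(4)\Rightarrow(3)\Rightarrow(2)\Rightarrow(1)$ and the treatment of $(1)\Rightarrow(4)$, $(3)\Rightarrow(2)$, and $(2)\Rightarrow(1)$ are exactly as in the paper.  The difference lies in $(4)\Rightarrow(3)$, and here your chain-level argument needs repair.

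The problematic assertion is ``$c(\xi_\kappa)\neq 0$ exactly when $[x_0]\neq 0$ in $\widehat{HF}$.''  Two issues: first, $c(\xi_\kappa)$ lives in $\widehat{HF}(-Y)$, not $\widehat{HF}(Y)$, so any equivalence with a class in $\widehat{HF}(Y)$ must pass through a duality that you do not make explicit.  Second, the generator $x_0$ of the top associated graded $\HFKa(\kappa,g)$ need not be a cycle in the full complex $\CFa(Y)$ (it satisfies only $\partial x_0\in\Filt_{g-1}$), so ``$[x_0]$'' is not well-defined and ``$x_0=\partial w$'' does not make sense as stated.  What is true, and what your subtraction argument really proves once correctly phrased, is that $c(\xi_\kappa)\neq 0$ is equivalent (by dualizing the short exact sequence $0\to\Filt_{g-1}\to\CFa(Y)\to Q_g\to 0$) to surjectivity of $p_*:\HFa(Y)\to H_*(Q_g)\cong\F$; and $p_*=0$ is exactly the statement that every cycle can be homologously pushed into $\Filt_{g-1}$, contradicting $\tautop(\kappa)=g$.

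The paper bypasses this chain-level reasoning entirely by invoking Proposition~\ref{prop:top/std}, which gives $\tautop(\kappa(L))=\tau^*_{c(\xi_{std})}(\kappa(L))$, and then the duality Proposition~\ref{prop:dualtau} to obtain $\tau_{c(\xi_{std})}(-\#^{|L|-1}\SoneStwo,\kappa(L))=-g$.  This places $c(\xi_{std})$ in the image of the bottommost filtered subcomplex of $\CFa(-\#^{|L|-1}\SoneStwo)$, which by definition is $c(\xi_{\kappa(L)})$; one thus gets the stronger conclusion $c(\xi_{\kappa(L)})=c(\xi_{std})\neq 0$, not just nonvanishing.  Your route is more elementary in that it avoids the $\tau^*$ machinery, but it requires the duality between the projection in $Y$ and the inclusion in $-Y$ to be made precise before it is complete.
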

\begin{proof}  Our argument is similar to \cite[Proposition 2.1]{SQPfiber}.
Proposition \ref{prop:quasipositive}   showed  that $(1)\Rightarrow (4)$.  We now show that $(4)\Rightarrow (3)\Rightarrow (2)\Rightarrow (1)$.  
\medskip

\noindent $(4)\Rightarrow (3)$.  Assume $\tautop(L):=\tautop(\#^{|L|-1}\SoneStwo, \kappa(L))= g$.  By Proposition \ref{prop:top/std}, we therefore have $\tau^*_{c(\xi_{std})}(\#^{|L|-1}\SoneStwo, \kappa(L))=g$.   The duality Proposition \ref{prop:dualtau} implies
\[ \tau_{c(\xi_{std})}(-\#^{|L|-1}\SoneStwo, \kappa(L))=-\tau^*_{c(\xi_{std})}(\#^{|L|-1}\SoneStwo, \kappa(L))=-g,\]
which implies $c(\xi_{std})$ is  the image of $H_*(\Filt_{-g}(-\#^{|L|-1}\SoneStwo,\kappa(L)))\cong \F$ under the map induced on homology by the inclusion
$$\iota:\Filt_{-g}(-\#^{|L|-1}\SoneStwo,\kappa(L))\hookrightarrow \CFa(-\#^{|L|-1}\SoneStwo).$$
By the definition of the contact invariant, this means that the  invariant of the contact structure associated to the fibered knot $\kappa(L)$  equals  $c(\xi_{std})$.  But $\xi_{\kappa(L)}\simeq \xi_L\# \xi_{std}$ by Lemma \ref{Lemma:contactsum}, and the product formula for the contact invariant \cite[Property 4, pg. 105]{tbbounds} therefore yields 
$$c(\xi_{\kappa(L)})=c(\xi_L\#\xi_{std})=c(\xi_L)\otimes c(\xi_{std}).$$
Since $c(\xi_{\kappa(L)})=c(\xi_{std})\ne 0$, it follows that $c(\xi_L)\ne0\in \HFa(-S^3)$.

\medskip
\noindent $(3)\Rightarrow (2)$.  Non-vanishing of $c(\xi_L)$ implies tightness, which shows  the contact structure induced by $L$ is isotopic to the (unique) tight contact structure on $S^3$.

\medskip
\noindent $(2)\Rightarrow (1)$.  Since the unknot and $L$ both induce the tight contact structure on $S^3$, the Giroux correspondence implies $L$ is stably equivalent to the unknot; that is, the fiber surface for $L$ is obtained from a disk by plumbing and deplumbing positive Hopf bands. Rudolph showed, however, that a Murasugi sum of surfaces is quasipositive if and only if each of the summands is quasipositive \cite{Rudolph1998}. In particular, plumbing and deplumbing of positive Hopf bands preserves strong quasipositivity of the bounding links. Thus, $L$ must be strongly quasipositive.
\end{proof}

The first author and Kuzbary impose a group structure on link concordance classes by defining such a structure on concordance classes of knots in $\SoneStwo$, and showing that knotification descends to concordance \cite{Hedden-Kuzbary}.  It is natural to ask whether the image of links in $S^3$ under knotification generates the concordance group of knots in $\SoneStwo$, i.e. is every knot in $\SoneStwo$ (or its connected sums)  concordant to the knotification of a link?  The following  answers this question negatively.

\begin{prop}\label{prop:notknotified}  There are null-homologous knots in $\SoneStwo$ which are not concordant to the knotification of any link in $S^3$.

\end{prop}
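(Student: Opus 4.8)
The plan is to combine the geometric intersection bound of Proposition~\ref{prop:monotone2} with concordance invariance of the $\tau$ invariants to extract a constraint on knotifications, and then to violate that constraint with an explicit knot in $\SoneStwo$.

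First I would record what knotifications look like here. If a knot $K\subset\SoneStwo$ is smoothly concordant to $\kappa(L)$ for some link $L\subset S^3$, then since a concordance lies in $\SoneStwo\times I$ and $\kappa(L)\subset\#^{|L|-1}\SoneStwo$, we must have $|L|=2$. As observed in the proof of Proposition~\ref{prop:Monotonicity}, the knotification $\kappa(L)$ can be isotoped to meet the essential $2$-sphere $\Sigma_0\subset\SoneStwo$ created by the single $1$-handle in at most two points (where the band passes through the handle). Applying Proposition~\ref{prop:monotone2} with $Y=S^3$ and $\alpha$ the generator of $\HFa(S^3)$, so that $\alpha\otimes\theta_\pm=\theta_\pm$, we get
\[
\tautop(\kappa(L))\le\taubot(\kappa(L))+1.
\]
Since $\tautop=\tau_{\theta_+}$ and $\taubot=\tau_{\theta_-}$ are concordance invariants (Corollary~\ref{cor:ConcordanceInvariants}), it follows that \emph{any} knot $K\subset\SoneStwo$ concordant to a knotification satisfies $\tautop(K)-\taubot(K)\le1$. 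Thus it suffices to exhibit a single null-homologous knot $K\subset\SoneStwo$ with $\tautop(K)-\taubot(K)\ge2$.

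To produce such a $K$, I would look for a knot meeting $\Sigma_0$ essentially in four points --- forced by Proposition~\ref{prop:monotone2}, since a knot meeting $\Sigma_0$ in only two points already has spread at most $1$ --- and compute its knot Floer filtration directly. Concretely, one wants a winding-number-zero knot in the solid-torus neighborhood of the core of $\SoneStwo$ that runs twice around and twice back with two positive clasps, or a fibered knot of genus $2$ built so that its binding meets $\Sigma_0$ in four points and its page induces the unique tight contact structure on $\SoneStwo$ (so that $c(\xi_{std})\ne0$ and the argument of Proposition~\ref{prop:top/std} pins $\tautop(K)=g(K)=2$). For either model I would then write down an admissible doubly-pointed Heegaard diagram for $K$ and compute the filtered homotopy type of $\CFa(\SoneStwo)$ exactly as in Example~\ref{Whitehead} --- relative Alexander gradings from $n_z(\phi)-n_w(\phi)$, holomorphic disks pinned down by the Riemann mapping theorem, absolute gradings normalized by the symmetry of $\HFKa$ --- reading off $\tautop(K)=2$ and $\taubot(K)=0$; the value of $\taubot$ may alternatively be accessed through the duality of Proposition~\ref{prop:dualtau}.

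The main obstacle is precisely this last step: choosing the pattern (equivalently, the doubly-pointed diagram) so that one can simultaneously certify that the four intersection points with $\Sigma_0$ are geometrically essential --- without this, $K$ might after all be the knotification of a two-component link --- and push the filtered chain-homotopy computation through far enough to see $\tautop(K)-\taubot(K)=2$. One must also take care that the chosen building blocks keep the ambient manifold equal to $\SoneStwo$ rather than to $\#^2\SoneStwo$, which is the subtlety that rules out the most naive ``double the Whitehead pattern'' constructions. Granting the example, the non-existence of a concordance to any $\kappa(L)$ is immediate from the spread bound above.
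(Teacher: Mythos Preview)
Your reduction is exactly the paper's: if $K$ is concordant to $\kappa(L)$ then $|L|=2$, Proposition~\ref{prop:monotone2} gives $\tautop(\kappa(L))\le\taubot(\kappa(L))+1$, and concordance invariance transfers this to $K$, so one needs a null-homologous knot with $\tautop-\taubot\ge 2$.

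Where you diverge is in how to certify the example, and the paper sidesteps the direct Heegaard-diagram computation you flag as the main obstacle. It takes $K_n$ to be $n$ concentric copies of $Wh^+$ in $\SoneStwo$ joined by $n-1$ positive bands (so the ambient manifold is $\SoneStwo$ by construction, answering your worry about accidentally landing in $\#^2\SoneStwo$), and bounds the two $\tau$ values separately by indirect means. For $\taubot$: each $K_n$ bounds a disk in $D^2\times S^2$ built from the twisted disk of Example~\ref{example:Whiteheadslice}, so Corollary~\ref{cor:S1S2genusbound} gives $\taubot(K_n)\le 0$. For $\tautop$: one exhibits a Legendrian representative $\mathcal{K}_n$ in $(\SoneStwo,\xi_{std})$ with $\tb(\mathcal{K}_n)+\rot(\mathcal{K}_n)=2n-1$, and the Bennequin bound of \cite{tbbounds} combined with Proposition~\ref{prop:top/std} yields $\tautop(K_n)=\tau_{\xi_{std}}(K_n)\ge n$. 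For $n\ge 2$ the spread is at least $2$.

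Your proposed route---computing the filtered complex from a doubly-pointed diagram for a knot meeting $\Sigma_0$ in four essential points---would work in principle, but the diagram is already substantially more complicated than Figure~\ref{fig:Whitehead}, and you have not actually produced it; as written, the proposal stops at ``granting the example.'' The paper's contact-geometric shortcut avoids that combinatorics entirely, at the cost of importing the Bennequin bound from \cite{tbbounds} as a black box, whereas a successful direct computation would be self-contained.
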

\begin{proof}
If $\kappa(L)\subset \SoneStwo$, then $L$ must be a 2-component link and by Proposition \ref{prop:monotone2},
\begin{equation}\label{eqn:kappabound}
\tautop(\kappa(L))\leq \taubot(\kappa(L))+|L|-1=\taubot(\kappa(L))+1.
\end{equation} 
Consider a sequence of knots $K_n$ where $K_1= Wh^+$, and a diagram for $K_n$ is given by $n$ concentric copies of $Wh^+$ in $\SoneStwo$ joined by $n-1$ positive bands. Figure \ref{fig:knotinS1S2} shows the knot $K_3$. The knots $K_n$ are all null-homologous in $\SoneStwo$, and each $K_n$ bounds a disk in $D^2\times S^2$ analogous to the one constructed for $Wh^+$ in Example \ref{Whitehead}. Thus $\taubot(K_n)\le 0$. If $K_n$ is concordant to $\kappa(L)$, then $\taubot(K_n)=\taubot(\kappa(L))\leq 0$ and Equation \eqref{eqn:kappabound} gives $\tau_{top}(K_n)=\tautop(\kappa(L))\leq 1$. 

On the other hand, we can produce a Legendrian representative $\mathcal{K}_n$ for $K_n$ in the standard contact structure on $\SoneStwo$ satisfying  \[tb(\mathcal{K}_n)+rot(\mathcal{K}_n)=2n-1.\] Indeed, such a representative is obtained by trading  the 0-framed unknot in Figure \ref{fig:knotinS1S2} for a Stein 1-handle, over whom the $2n$ strands of $K_n$ pass, and  replacing the vertical tangencies of the $n$ positive clasps of $K_n$ with cusps.  The resulting  Legendrian knot in $(\SoneStwo,\xi_{std})$  has \[tb(\mathcal{K}_n)=\text{writhe}(\mathcal{K}_n)-\frac{\# \text{cusps}(\mathcal{K}_n)}{2}=(2n+n-1)-\frac{2n}{2}=2n-1\] and vanishing rotation number, according to the adaption of these invariants to the setting at hand, see \cite[Chapter 11]{GS}.

 Applying the main theorem of \cite{tbbounds}, we have $tb(\mathcal{K}_n)+rot(\mathcal{K}_n)\le 2\tau_{\xi_{std}}(K_n)-1$, so that $n\le \tau_{\xi_{std}}(K_n).$\footnote{In fact $n=\tau_{\xi_{std}}(K_n)$, since $\tau$ invariants are bounded by the Seifert genus, and there is a genus $n$ Seifert surface for $K_n$ by construction.}  But $\tau_{\xi_{std}}(K_n)=\tau_{top}(K_n)$, by Proposition \ref{prop:top/std}. Thus, for each $n>1$, $\tau_{top}(K_n)>1$ and $K_n$ is therefore not concordant to a knotified link.
\begin{figure}
\includegraphics[height=2in]{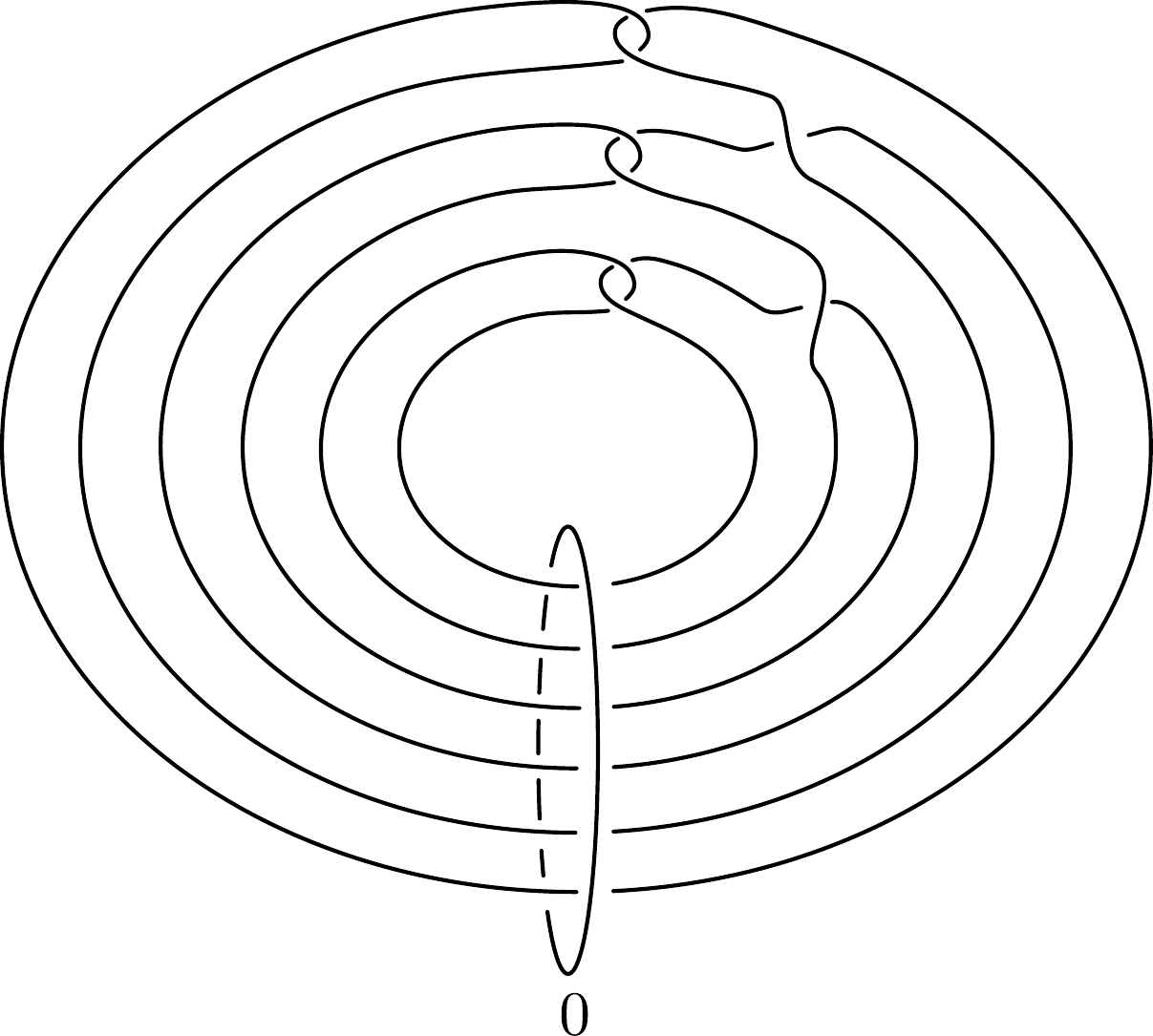}
\caption{The knot $K_3$ in $\SoneStwo$. Calculation shows  $\tautop(K_3)=3$.}
\label{fig:knotinS1S2}
\end{figure}
\end{proof}

\subsection{Comparison with other definitions of $\tau(L)$}\label{subsec:comparison}
In \cite{Links}, \ons\ define an invariant of a link $L\subset S^3$  taking the form of a graded, $\Z^{|L|}$ filtered complex whose graded Euler characteristic recovers the multivariable Alexander polynomial.  There are versions of this invariant for base ring  $\F$ or $\F[U]$, corresponding to the ``hat" and ``minus" versions of Floer homology, respectively.    Over either ring, the total homology of the complex has rank $2^{|L|-1}$ and, using this, one can derive numerical invariants in the spirit of our definition of $\tau_{\Theta}(L)$.  It is natural to ask how they compare.   In the special case of links in the 3-sphere, one can also define and compute the link Floer homology complex from a grid diagram \cite{MOS, MOST,GridBook}.  Within the context of grid homology, Cavallo defines a numerical invariant of links and shows that it satisfies many of the same properties of the $\tau$ invariants we define here.  In particular, he shows it is a concordance invariant \cite[Theorem 1.2]{Cavallo}, bounds the genera of surfaces bounded by the link in $B^4$ \cite[Proposition 1.4]{Cavallo}, detects strongly quasipositive fibered links \cite[Theorem 1.2]{CavalloSQP},  refines the slice-Bennequin inequality \cite[Proposition 1.5]{Cavallo} and is determined by the signature of $L$ for quasi-alternating links \cite[Proposition 1.1(iv)]{Cavallo}.    

\begin{theorem}\label{thm:tausequal}  The  invariant $\tautop(L)$ defined above is equal to the invariant $\tau(L)$ defined by Cavallo \cite{Cavallo} which, in turn, is equal to the invariant $\tau_{max}(L)$ defined by Ozsv{\'a}th-Szab{\'o}-Stipsicz \cite[Definition 8.3.3]{GridBook}.\end{theorem}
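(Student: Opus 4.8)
The plan is to identify all three invariants with the same geometrically-defined quantity, namely the minimal Alexander grading of a cycle representing the top generator of $\HFa(\#^{|L|-1}\SoneStwo)$ in the $\Z$-filtered complex associated to the knotification $\kappa(L)$. Both $\tau_{max}(L)$ (in the grid-homology formulation of \os-Stipsicz) and Cavallo's $\tau(L)$ are defined from the multi-filtered link Floer complex $\CFKa(L)$, or its grid-diagram incarnation; the first thing I would do is recall that collapsing the $\Z^{|L|}$ multi-filtration to the single ``total Alexander'' filtration $A = \sum_i A_i$ yields a $\Z$-filtered complex that is filtered chain homotopy equivalent to $\CFKa(\#^{|L|-1}\SoneStwo, \kappa(L))$ equipped with its Alexander filtration. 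This is precisely the content of \ons's knotification correspondence between link Floer homology and the knot Floer homology of $\kappa(L)$ (\cite{Links}, and for grid diagrams \cite{MOS, MOST}); I would cite it rather than reprove it.

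Granting that identification, the proof reduces to checking that each author's numerical invariant is, under this collapsing, the filtration level of $\Theta_{top}$. For $\tau_{max}$ as in \cite[Definition 8.3.3]{GridBook}: \os-Stipsicz define it via the maximal Alexander grading among a distinguished family of generators of the homology of the associated graded object, or equivalently via a threshold in the minus-version complex; I would translate their definition through the standard relationship between the $U=0$ specialization of $\CFLm$ and the hat complex, and observe that their ``max'' over the $\Z^{|L|}$-grading of the surviving generator corresponds exactly to selecting the summand of $\HFa(\#^{|L|-1}\SoneStwo)$ of top Maslov grading (since, as noted in the proof of Theorem~\ref{thm:altLinks}, top Maslov grading $\tfrac{|L|-1}{2}$ corresponds to the top corner of the multi-grading cube). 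For Cavallo's $\tau(L)$ in \cite{Cavallo}: his definition is already phrased as a threshold for when a certain class (the generator of $H_*$ of the grid complex in the appropriate corner) appears in the image of the inclusion of a filtered sublevel set; matching his ``$j$'' parameter with our Alexander grading $r$, and his distinguished homology class with $\Theta_{top}$, gives $\tau(L) = \tautop(L)$ on the nose. I would present this as two short lemmas, one per author, each a dictionary-translation argument.

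The one genuine subtlety — and I expect this to be the main obstacle — is bookkeeping the various grading and normalization conventions so that the ``top'' in all three settings really is the same corner of the cube, and so that the additive constants (the shift by $\tfrac{1}{2}(1-|L|)$ mentioned in the introduction, the symmetrization of the Alexander multi-grading, and whatever convention \os-Stipsicz and Cavallo use for the $U$-powers) all line up. A clean way to sidestep most of this is to verify the identity on a generating set of examples where all three quantities are independently computable: all three are concordance invariants (Corollary~\ref{cor:ConcordanceInvariance}, \cite[Theorem 1.2]{Cavallo}, and the grid-homology analogue), all three are additive under connected sum, and all three are determined by the signature for alternating (indeed quasi-alternating) links — Theorem~\ref{thm:altLinks} here, \cite[Proposition 1.1(iv)]{Cavallo}, and the corresponding statement in \cite{GridBook}. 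Since the constants in these formulas are pinned down by, e.g., the two-component unlink and the positive and negative Hopf links, agreement on those few links forces agreement of the normalization constants, and then the structural identification of the complexes forces agreement in general. I would therefore organize the proof as: (i) invoke the knotification identification of complexes; (ii) match each of Cavallo's and \os-Stipsicz's invariants with a filtration threshold for $\Theta_{top}$ up to an a priori constant; (iii) pin down the constant by evaluating on unlinks and Hopf links using the alternating-link formula. The structural step (i)--(ii) is routine once the right references are assembled; step (iii) is where care is needed, but it is a finite check.
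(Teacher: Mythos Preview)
Your overall strategy matches the paper's: reduce everything to the filtration level of $\Theta_{top}$ in the $\Z$-filtered hat complex of $\kappa(L)$, and then translate the grid-homology definitions across. The equality $\tau(L)=\tau_{max}(L)$ is, as you suggest, a citation (the paper invokes \cite[Section 5]{Cavallo} directly).

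The genuine gap is in your step (i). You write that collapsing the $\Z^{|L|}$ multi-filtration to the total Alexander filtration yields a $\Z$-filtered complex that is filtered chain homotopy equivalent to $\CFKa(\#^{|L|-1}\SoneStwo,\kappa(L))$, and that this ``is precisely the content of \ons's knotification correspondence'' from \cite{Links}. It is not. The result in \cite{Links} is a graded isomorphism of knot Floer homology groups --- i.e.\ of the associated graded objects --- not a filtered equivalence of the underlying complexes. The paper discusses exactly this point: the Gromov-limit argument used in \cite{Links} produces, from a curve on the $\kappa(L)$ diagram dropping the Alexander filtration by $k$, a curve on the link diagram satisfying $\sum_i(n_{z_i}(\phi)-n_{w_i}(\phi))=k$ \emph{together with} the extra constraints $n_{z_i}(\phi)=n_{w_{i+1}}(\phi)$. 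Those constraints are absent from the collapsed filtration's differential on the link side, so the moduli spaces do not match and one cannot simply cite \cite{Links} for a filtered identification.

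The paper's fix is to use a different Heegaard diagram for $\kappa(L)$: surger along pairs of $w$ basepoints and simultaneously perform $0/3$ stabilizations along the necks, following \cite{SarkarSigns}. On this multi-pointed diagram for $\kappa(L)$ no basepoints are eliminated, so the two filtrations are defined by literally the same intersection numbers, and the degeneration/gluing argument identifies the relevant moduli spaces. One then argues that the extra $0/3$ stabilizations tensor both filtered complexes with the same rank-two filtered vector space, so the filtration index of the top-graded summand is unaffected. This is the step you are missing, and it is the technical heart of the proof.

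Your proposed fallback (iii) --- pin down constants on unlinks and Hopf links via the alternating-link formula --- does not rescue the argument. Agreement on alternating links, concordance invariance, and additivity under connected sum do not determine a link invariant: not every link is concordant to a connected sum of alternating links. You still need the structural filtered identification to conclude equality in general, and that identification requires the additional work above.
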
 

\begin{proof}
Cavallo defines $\tau(L)$ as the (Alexander) filtration level that supports the highest (Maslov) graded subspace of the total homology of the ``simply blocked, bigraded grid complex of a knot" \cite[Definition 8.2.7]{GridBook}, while Ozsv{\'a}th-Szab{\'o}-Stipsicz define $\tau_{max}(L)$ as negative the minimal Alexander grading of any homogeneous element generating a free $\F[U]$ submodule of the ``collapsed grid homology".  Here, the ``simply blocked, bigraded grid complex of a knot" is $\Z$-filtered chain homotopy equivalent to the knot Floer homology  ``hat" complex $(\CFKa(L), \partial)$, and the ``collapsed grid homology"  is isomorphic as a  bigraded $\F[U]$-module to the  knot Floer ``minus" groups of  the link, $\HFKm(L)$.  These equivalences are immediate from the stabilization invariance of Floer homology with respect to index 0/3 Heegaard diagram stabilizations, together with the fact that a grid diagram is a suitably stabilized genus one Heegaard diagram adapted to a link, see \cite{MOS}.  In both instances, the Alexander grading on the knot Floer homology of the link is obtained by picking a Seifert surface compatible with the (implicit) orientation, and collapsing the Alexander multi-grading using this choice to a $\Z$-grading.  In \cite[Section 5]{Cavallo},  Cavallo shows that the $\tau$-sets coming from the hat complex and minus homology coincide, where the former records the filtration levels where the homology changes dimension and the latter   the negative of the Alexander gradings of generators of the free $\F[U]$ submodules of $\HFKm(L)$.  This has, as consequence, the equality $\tau(L)=\tau_{max}(L)$.  

It remains to identify our invariant, $\tautop(L),$ defined with respect to the filtration on the hat complex of the knotification of $L$, with the former.  For this, we appeal to an argument similar to the proof of \cite[Theorem 1.1]{Links}.  That theorem posits a graded isomorphism between the hat Floer homology groups of $\kappa(L)\subset \#^{|L|-1}S^1\times S^2$ and  the hat knot Floer homology groups of the link.  The proof of this isomorphism goes by way of a degeneration (and implicit gluing) argument for $J$-holomorphic curves in Lipshitz's cylindrical formulation of Heegaard Floer homology \cite{Lipshitz}.

More precisely, they consider a multi-pointed Heegaard diagram adapted to the link $L$, and from it derive a doubly pointed Heegaard diagram for the knotification $\kappa(L)$ by  surgering the initial Heegaard diagram along $|L|-1$ pairs of basepoints, each lying on different components of $L$.  They then consider a 2-parameter sequence of complex structures on this latter diagram, parametrized according to the neck length of the annulus glued in via the surgery and the placement of the basepoints along which the surgery is performed.  Taking independent Gromov limits with respect to the two parameters, they argue that for some complex structures on both diagrams, holomorphic curves missing both the basepoints in the diagram for $\kappa(L)$ must coincide with those missing all the basepoints on the diagram for $L$.

This argument does not extend, however, to give a correspondence between the holomorphic curves relevant to the collapsed Alexander multi-filtration for the multi-pointed diagram for $L$ and the Alexander filtration for $\kappa(L)$.  Indeed, the Gromov limit taken shows that a pseudo-holomorphic curve arising from the diagram for $\kappa(L)$, which drops the Alexander filtration by $k$, gives rise to a curve for the multi-pointed diagram for $L$ satisfying:
\[  \sum_{i=1}^{|L|}  (n_{z_i}(\phi)-  n_{w_i}(\phi)) = k\]
and the additional requirement that $n_{z_i}(\phi)=n_{w_{i+1}}(\phi)$ for all $i=1,...,|L|-1$.   While these curves are included in  the filtered boundary operator for $L$, the latter requirement is not present and hence the collapsed filtration for $L$ appears to count more curves.  

We deal with this using an idea suggested by Sucharit Sarkar, which is to instead surger the Heegaard diagram along pairs of $w$ basepoints for $L$, and simultaneously perform  0/3 stabilizations by adding an $\alpha/\beta$ pair of Hamiltonian isotopic curves running along each of the newly created necks.  We then place $w$ basepoints in the small bigons bounded by the new curves in the neck.  The result is a multi-pointed Heegaard diagram for $\kappa(L)$.   See \cite[Figure 2.2]{SarkarSigns}.    In this case, since we have not eliminated any basepoints, the definitions of the collapsed filtration for $L$ and the filtration for $\kappa(L)$ agree, and the Gromov limit and gluing arguments identify pseudo-holomorphic curves in the appropriate moduli spaces.  See \cite[Theorem 2.7]{SarkarSigns} for more details.  This shows that the $\Z$-filtered homotopy type of the hat complex for $\kappa(L)$ and the collapsed filtration of $L$ agree, after performing a sequence of 0/3-stabilizations to both.  But the effect of such a stabilization on the filtered homotopy type is to tensor both complexes with a filtered vector space of rank two, with summands differing in both filtration and homological grading  by one.   It follows that the filtration indices of the top graded summands of both complexes, and hence those of the complexes before stabilization, are equal.  But these indices are equivalent to the definitions of  $\tau_{top}(\kappa(L)):=\tau_{top}(L)$ and Cavallo's $\tau(L)$, respectively.  
\end{proof}

\begin{remark}  Since the definitions of  $\tau$ used by Cavallo and \os-Stipsicz take place in the context of grid diagrams, they do not admit a straightforward extension to yield the definite 4-manifold bound, Theorem \ref{thm:linkgenus}.  Moreover, an intrinsic proof of functoriality for grid homology has not appeared (though see \cite{GrahamThesis} for results in this direction).  This necessitates the usage of $\tau$ sets (or  Cavallo's $T$-function) in that context, rather than $\tau$ invariants associated to specific Floer classes $\Theta$.  We can define analogues of $\tau$ sets in our context, as the ordered collection of $\tau$ invariants associated to our basis elements $\theta_{\epsilon_1}\otimes \ldots \otimes\theta_{\epsilon_{|L|-1}}$, and the proof of Theorem \ref{thm:tausequal} should show that we will obtain the same set  as \os-Stipsicz.

\end{remark}

\begin{remark} The equivalence of invariants established in Theorem \ref{thm:tausequal} may be of computational significance for $\tau_{max}(L)$.  On its own this invariant is somewhat difficult to extract from grid homology. Cavallo's $\tau(L)$, on the other hand, can be derived solely from the subgroup in homological grading zero, and is therefore significantly simpler to compute. 
\end{remark}
\bibliographystyle{plain}
\bibliography{mybib}
\end{document}